\newtheorem{theorem}{Theorem}[section]
\newtheorem{lemma}[theorem]{Lemma}
\newtheorem{proposition}[theorem]{Proposition}
\newtheorem{remark}[theorem]{Remark}
\newtheorem{remarks}[theorem]{Remarks}
\newcommand{\Sd}{\mathbb{S}^2}
\newcommand{\Hd}{\mathbb{H}^2}
\newcommand{\e}{\varepsilon}
\newcommand{\R}{\mathbb{R}}
\newcommand{\D}{\mathbb{D}}
\newcommand{\oD}{\overline{\mathbb{D}}}
\newcommand{\de}{\partial}
\newcommand{\weakto}{\rightharpoonup}
\DeclareMathOperator{\dist}{dist}
\renewcommand{\b }{\beta }
\newcommand{\G}{\Gamma}
\renewcommand{\S}{\Sigma}
\def\bbm[#1]{\mbox{\boldmath $#1$}}
\newcommand{\beq }{\begin{equation}}
\newcommand{\eeq }{\end{equation}}
\def\sideremark#1{\ifvmode\leavevmode\fi\vadjust{\vbox to0pt{\vss
 \hbox to 0pt{\hskip\hsize\hskip1em
 \vbox{\hsize3cm\tiny\raggedright\pretolerance10000
  \noindent #1\hfill}\hss}\vbox to8pt{\vfil}\vss}}}%
\begin{document}

\title[A curvature prescription problem on the disk]{ Blow-up analysis of conformal metrics of the disk with prescribed Gaussian and geodesic curvatures}

\author{Aleks Jevnikar}
  \address{Aleks Jevnikar \\ 
University of Udine\\
Via delle Scienze 206\\
33100 Udine, Italy.}
  \email{aleks.jevnikar@uniud.it}

\author{Rafael L\'{o}pez-Soriano}
  \address{Rafael L\'{o}pez-Soriano \\
    Universidad de Valencia\\
    Departamento de An\'alisis Matem\'atico\\
    Dr. Moliner 50 \\
    46100 Burjassot (Valencia), Spain.}
  \email{rafael.lopez-soriano@uv.es}

\author{Mar\'ia Medina}
  \address{Maria Medina \\
    Universidad de Granada\\
    Departamento de An\'alisis Matem\'atico\\
    Campus Fuentenueva\\
    18071 Granada, Spain}
  \email{mamedina@ugr.es}

\author{David Ruiz}
  \address{David Ruiz \\
    Universidad de Granada\\
    Departamento de An\'alisis Matem\'atico\\
    Campus Fuentenueva\\
    18071 Granada, Spain}
  \email{daruiz@ugr.es}

\thanks{R. L.-S. and D. R. have been supported by the FEDER-MINECO Grants MTM2015-68210-P  and PGC2018-096422-B-I00 and by J. Andalucia (FQM-116).  R.L.-S. is currently funded under a Juan de la Cierva Formaci\'on fellowship (FJCI-2017-33758) by the Ministry of Science, Innovation and Universities. This paper is also part of the project ``The prescribed Gaussian and geodesic curvatures problem'' funded by Mathematisches Forschungsinstitut Oberwolfach. M. M. is supported by the European Union's Horizon $2020$ research and innovation programme under the Marie Sklodowska-Curie grant agreement N $754446$ and UGR Research and Knowledge Transfer Fund - Athenea3i.}


\keywords{Prescribed curvature problem, conformal metric, blow-up analysis, Pohozaev-type identity.}

\subjclass[2000]{35J20, 58J32, 53A30, 35B44}

\begin{abstract}
{This paper is concerned with the compactness of metrics of the disk with prescribed Gaussian and geodesic curvatures. We consider a blowing-up sequence of metrics and give a precise description of its asymptotic behavior. In particular, the metrics blow-up at a unique point on the boundary and we are able to give necessary conditions on its location. It turns out that such conditions depend locally on the Gaussian curvatures but they depend on the geodesic curvatures in a nonlocal way. This is a novelty with respect to the classical Nirenberg problem where the blow-up conditions are local, and this new aspect is driven by the boundary condition.}
\end{abstract}

\maketitle

\section{Introduction}
\setcounter{equation}{0}

Let $(\S,g)$ denote a compact surface $\S$ equipped with a certain metric $g$. The classical Kazdan-Warner problem (see \cite{ber, KW})  consists of determining whether a prescribed function $K$ is the Gaussian curvature of a new metric $\tilde{g}$ conformal to $g$. If $\tilde{g}=e^u g$ and $K$, $K_g$ are respectively the Gaussian curvatures relative to these metrics, then the following relation holds:
\begin{equation}\label{eq:regu}
-\Delta_g u + 2K_g(x) = 2K(x)e^u \quad\mbox{in}\quad \S.
\end{equation}
Hence the Kazdan-Warner problem reduces to solving this equation.

%


\medskip 

The problem of prescribing the Gaussian curvature on the standard sphere $\mathbb{S}^2$ is particularly delicate and receives the name of Nirenberg problem. This question has been addressed by a large number of papers (see for instance \cite{aubin, ChGYg, ChYgActa, ChYg, chen-ding, chen-ding2, C-L CPAM,  Ji, han, han-li, hamza,  struwe, xu-yang}), some of whom are commented below. For instance, an obstruction to existence was found by Kazdan and Warner (\cite{KW}). On the other hand, if $K$ has antipodal symmetry (i.e. $K(-x) = K(x)$) and it is somewhere positive then there exists a solution, see \cite{Moser}. Other results under symmetry assumptions are given in \cite{chen-ding}. In \cite{struwe}  solutions of \eqref{eq:regu} have been found as the asymptotic limit of a suitable flow.

%

A related and significant issue in this kind of problems is the study of compactness of solutions, starting from \cite{breme, li-sha}. Roughly speaking, given $\{u_n\}$ a sequence of solutions, we desire to find conditions that allow us to pass to the limit. By regularity, it is enough to show $L^{\infty}$ boundedness. This problem is typically studied by means of a blow-up analysis, which determines whether the sequence $u_n$ is uniformly bounded or may blow-up. After rescaling, a blowing-up sequence of solutions resembles locally a limit entire solution, and such solutions are classified in \cite{C-L}. In particular this implies a quantization result for general Liouville equations, see \cite{li-sha}.

Being more specific, let us consider the problem:
\begin{equation}\label{eq:regun}
-\Delta u_n + 2 = 2K_n(x)e^{u_n} \quad\mbox{in}\quad \mathbb{S}^2,
\end{equation}
where $K_n$ converges in $C^2$ sense to a strictly positive function $K(x)$. Observe that if $K_n=1$, the problem is invariant by the group of conformal maps of the sphere, which is not compact. In the non-constant case, this invariance is lost but concentration of solutions may still occur. This is the so-called ``bubbling phenomena"; the solutions concentrate all their mass around a certain singular point. 

Not only the asymptotic behavior of the sequence is relevant, but also the location of the point of concentration. It has been shown (\cite{ChYg2, ChGYg}) that if the sequence $u_n$ is blowing-up, then the point of concentration satisfies:
\begin{equation} \label{cond-nir}  \nabla K(p)=0,\ \Delta K(p)=0.  \end{equation}

As a consequence, if those conditions are never satisfied for any point $p \in \mathbb{S}^2$, one concludes compactness for \eqref{eq:regun}. This result is also a key point in the proof of existence of solutions for the Nirenberg problem, which was given for the first time in \cite{ChYgActa} via a variational argument and revisited under a different approach in \cite{Ji}.

\medskip

If $\S$ is a surface with boundary, one usually imposes boundary conditions. A natural geometric problem consists in prescribing also the geodesic curvature of the boundary; in this way we are led to the problem:
\begin{equation} \label{gg0}
\left\{\begin{array}{ll}
-\Delta u +2 K_g = 2 K e^u  & \text{ in } \S, \\
\frac{\partial u}{\partial \nu} +2 h_g = 2he^{u/2}  &\text{ on } \partial\S,\end{array}\right.
\end{equation}
where $\nu$ is the outward normal vector to $\partial \S$. Here $h_g$ is the  geodesic curvature of $\partial \S$ under the metric $g$, and $h$ is the geodesic curvature to be prescribed for the new metric $\tilde{g}=e^u g$. 

\medskip

In the literature there are some results on the latter problem.  The case of constant $K$, $h$ has been considered in \cite{brendle}, where the author used a parabolic flow to obtain solutions in the limit. Some classification results for the half-plane are also available in \cite{galvez, li-zhu, Zhang}. The case of nonconstant curvatures was addressed for the first time in \cite{cherrier}, but the results there are partial and in some of them an unknown Lagrange multiplier appears in the equation.

In \cite{lsmr} the case of surfaces topologically different from the disk is studied when $K<0$. In that paper a new type of blow-up phenomenon appears, where length and area diverge. In the existence results the authors exploit the variational formulation of the problem and solutions are obtained by minimization and min-max techniques. For more general mean field problems with boundary terms a quantization result has been recently given in \cite{bls}.

\medskip

The case of the disk $\S = \D$ can be seen as a natural generalization of the Nirenberg problem. Indeed, the effect of the noncompact group of conformal maps of the disk plays a fundamental role. The problem becomes:
\begin{equation} \label{gg}
\left\{\begin{array}{ll}
\displaystyle{-\Delta u = 2Ke^u } \qquad & \text{ in }\D, \\
\displaystyle{\frac{\partial u}{\partial \nu} +2 = 2he^{u/2}} \qquad  &\text{ on } \partial \D.
\end{array}\right.
\end{equation}

Integrating \eqref{gg} and applying the Gauss-Bonnet Theorem, one obtains
\beq\label{GB}
\int_{\D}K e^u \, +  \int_{\partial \D}h e^{u/2} \, =  2\pi,
\eeq
which implies that $K$ or $h$ must be positive somewhere.  

Some works have also addressed problem \eqref{gg}. For example, the case $h=0$ has been treated in \cite{ChYg} (see also \cite{guo-hu}), while the case $K=0$ has been considered in \cite{kcchang, LiLiu, LiHu}. If $K=0$ a blow-up analysis has been performed in \cite{GuoLiu}, and in \cite{DLMR} a new approach is given under mild conditions on the function $h$.

Up to our knowledge, the only result on \eqref{gg} for non-constant curvatures is \cite{CruzRuiz}. In that paper an existence result for \eqref{gg} with positive symmetric curvatures is given, in the spirit of the aforementioned result of Moser (\cite{Moser}).

\medskip

The aim of this paper is to provide a complete blow-up analysis of problem \eqref{gg} for non-constant functions $K$, $h$. We emphasize that we do not impose any sign restriction on the functions $K$ and $h$. We are interested in the precise asymptotic behavior of blowing-up solutions, but also on the location of the point of concentration, which turns out to be located on $\partial \D$. Indeed, we shall find an analogue of the conditions in \eqref{cond-nir} for equation \eqref{gg}. Such conditions involve both curvatures $K$ and $h$, and studying their interaction is the original motivation of this work. It turns out that the blow-up point $p$ depends on $K$ in a local way but it depends on $h$ in a nonlocal fashion. Indeed the 1/2 laplacian of $h$ appears which, as is well known, depends on all values of $h$ on $\partial \D$. This  was rather unexpected, at least for us, since the original problem is local in nature. 

\medskip Being more specific, our main result is the following:


%
%
%

\begin{theorem} \label{main} Let $u_n$ be a sequence of solutions of the problem
	\begin{equation} \label{ggn}
	\left\{\begin{array}{ll}
	\displaystyle{-\Delta u_n = 2K_ne^{u_n} } \qquad & \text{in $\D$},\\
	\displaystyle{\frac{\partial u_n}{\partial \nu} +2 = 2h_ne^{u_n/2}} \qquad  &\text{on $\partial \D $},
	\end{array}\right.
	\end{equation}
where $K_n \to K$ in $C^2(\oD)$ and $h_n \to h$ in $C^2(\partial \D)$ as $n\to +\infty$. Assume moreover that $u_n$ is blowing-up, namely, $\sup \{ u_n \} \to +\infty$, with bounded global mass, 
	\begin{equation} \label{mass} \int_{\D} e^{u_n} \, +  \int_{\partial \D} e^{u_n/2} \leq C.	\end{equation}

	Then there exists a unique point $p \in \partial \D$ such that 
	
	\begin{enumerate}
		\item[i)] If $K(p) \leq 0$ then $h(p)> \sqrt{- K(p)}$.	
		
		\item[ii)] \noindent There exists $a_n \to p$ so that
		$$u_n(z)= u_{a_n}(z) + \psi_n(z),$$
		$$u_{a_n}(z):=2\log \left\{\frac{2 \hat{\phi}_n (1-|a_n|^2)}{ \hat{\phi}_n^2|1- \overline{a}_n z|^2 + \hat{k}_n |z-a_n |^2}\right\},$$
		with
		$$ \hat{\phi}_n:= \phi_n \left(\frac{a_n}{|a_n|} \right), \qquad \hat{k}_n:=K_n \left(\frac{a_n}{|a_n|} \right),$$
		where 	
		\beq\label{phin}
		\phi_n(z):= h_n(z) + \sqrt{h_n(z)^2 + K_n(z)}.
		\eeq 
		Moreover, the error term $\psi_n$ satisfies:
		$$ \| \psi_n \|_{C^{0,\alpha}(\overline{\D})} \to 0 \ \mbox{ for any } \alpha \in (0,1/2).$$
		\item[iii)] Let $H$ denote the harmonic extension of $h$, that is,		
		\begin{equation*} 
		\left\{\begin{array}{ll}
		\displaystyle{\Delta H = 0} \qquad & \text{in $\D$},\\
		H = h \qquad  &\text{on $\partial \D $.}
		\end{array}\right.
		\end{equation*}
		
		Define:	
		\begin{equation}\label{PHI}
		\Phi(z):= H(z) + \sqrt{H(z)^2 + K(z)},
		\end{equation}
		 which, by i), is positive and well defined at least in a neighborhood of $p$. Then:  
		 \beq \label{cond-nir2} \nabla \Phi(p)=0. \eeq
		
	\end{enumerate}
\end{theorem}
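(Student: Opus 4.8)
\emph{Sketch of a strategy.} The plan is to follow the blow-up scheme familiar from the Nirenberg problem, adapted to the boundary equation \eqref{ggn}, the genuinely new point being the location condition \eqref{cond-nir2}, which I would extract from a Pohozaev-type identity. First I would run the Brezis--Merle analysis (\cite{breme}) together with the quantization results (\cite{li-sha, C-L}): using the $C^2$-bounds on $K_n$, $h_n$ and the mass bound \eqref{mass}, the blow-up set is finite and, after excluding residual mass (the subtle point here, which I expect to need a local Pohozaev argument exploiting the boundary condition), the curvature mass concentrates at these points with quantized values --- $4\pi$ at each interior bubble and $2\pi$ at each boundary bubble (by Gauss--Bonnet applied to the respective blown-up limits). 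Since the right-hand side of \eqref{GB} equals $2\pi$, the relation $1=2\,\#_{\mathrm{int}}+\#_{\mathrm{bdry}}$ forces exactly one bubble, necessarily on $\partial\D$, whence $u_n\to-\infty$ locally uniformly on $\oD\setminus\{p\}$. Rescaling around $p$ and mapping $\D$ conformally onto $\R^2_+$, the rescaled functions converge in $C^2_{\mathrm{loc}}(\overline{\R^2_+})$ to a finite-mass solution $V$ of $-\Delta V=2K(p)e^{V}$ in $\R^2_+$, $\partial_\nu V=2h(p)e^{V/2}$ on $\partial\R^2_+$ (the ``$+2$'' is scaled away); by the classification of \cite{galvez, li-zhu, Zhang} such $V$ exists precisely when $\phi(p)=h(p)+\sqrt{h(p)^2+K(p)}>0$, which is part i), and $V$ is unique up to conformal automorphisms of $\R^2_+$, its orbit being $2$-dimensional (the scale of $V$ is fixed by $K(p)$ and $h(p)$). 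Pulling back to $\D$ yields the profile $u_{a}$ and a sequence $a_n\to p$.

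For part ii) I would choose $a_n$ --- and its modulus --- so that $u_{a_n}$ is the optimal bubble: matching the concentration point and scale and imposing that $\psi_n:=u_n-u_{a_n}$ be orthogonal to the two-dimensional kernel of the operator obtained by linearizing at $V$, which by the non-degeneracy of $V$ is spanned by the infinitesimal generators that move the bubble. Then $\psi_n$ solves a linear problem whose data are, in the interior, $2\hat k_n e^{u_{a_n}}(e^{\psi_n}-1)+2(K_n-\hat k_n)e^{u_n}$ and, on $\partial\D$, the analogous expression with $e^{u_n/2}$, $h_n$ and the corresponding constant boundary coefficient. Since $K_n-\hat k_n$ and the corresponding boundary difference vanish to first order at $p$ while the bubble lives on a vanishing scale, these data are small in suitable weighted norms, and away from $p$ everything is exponentially small; a contraction/elliptic-estimate argument using the orthogonality then gives $\|\psi_n\|_{C^{0,\alpha}(\oD)}\to 0$ for every $\alpha<1/2$, the threshold $1/2$ being dictated by the Neumann datum $h_n e^{u_n/2}$ --- i.e. the half-Laplacian nature of the trace operator.

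Part iii) is the heart of the matter. Fixing $j\in\{1,2\}$, I would multiply the interior equation in \eqref{ggn} by $\partial_j u_n$ and integrate over $\D$; writing $2K_n e^{u_n}\partial_j u_n=\partial_j(2K_n e^{u_n})-2\,\partial_j K_n\, e^{u_n}$, integrating by parts, then on $\partial\D$ substituting $\partial_\nu u_n=2h_n e^{u_n/2}-2$ and integrating the tangential terms by parts along the circle, one arrives at a Pohozaev-type identity whose ``bulk'' is $\int_\D\partial_j K_n\, e^{u_n}$ plus boundary integrals collecting the tangential derivative of $h_n$, the term $h_n e^{u_n/2}$ and the ``$+2$'' contribution, all remaining terms being $o(1)$ by part ii) and the decay of $u_n$ away from $p$. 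Passing to the limit with part ii), these converge to explicit multiples of $\nabla K(p)$, of the tangential derivative of $h$ at $p$, and --- crucially --- of $\partial_\nu H(p)$, the normal derivative at $p$ of the harmonic extension $H$ of $h$, equivalently (up to a constant) of $(-\Delta_{\partial\D})^{1/2}h$ at $p$. The harmonic extension is forced upon us because the far-field of $u_n$ near $p$, once its logarithmic singularity has been removed, is a harmonic function whose Neumann data on $\partial\D$ is tied to $h$ through the boundary condition --- this is the mechanism behind the announced nonlocal dependence. Collecting the limiting coefficients, the identity should read $c\,\nabla\Phi(p)=0$ for a nonzero constant $c$, with $\Phi$ as in \eqref{PHI}, which is \eqref{cond-nir2}. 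I expect the main obstacle to be exactly this final bookkeeping --- showing that $H$, rather than $h$ itself, emerges in the limit and that the various boundary coefficients recombine into precisely $\nabla\Phi(p)$ --- with a secondary difficulty in the sharp asymptotics of part ii) near a \emph{boundary} bubble, where the half-Laplacian caps the Hölder regularity of $\psi_n$ below the exponent $1/2$.
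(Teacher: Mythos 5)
Your parts i) and ii) follow essentially the paper's route: Brezis--Merle plus a local Pohozaev identity at each boundary point to rule out residual mass and multiple local bubbles, Gauss--Bonnet to force a single boundary concentration point, and a choice of $a_n$ normalizing the bubble so that the error solves a linearized problem with data vanishing to first order at $p$. (The paper fixes $a_n$ by a barycenter/degree argument, $\int_{\D} z\,e^{v_n}=0$ after composing with $f_{a_n}$, rather than by explicit orthogonality to the kernel, but under the nondegeneracy of the limit profile these normalizations are equivalent, and the $\alpha<1/2$ threshold arises for the reason you identify.)

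Part iii), however, has a genuine gap as described. If you multiply the equation by $\partial_j u_n$ for a \emph{constant} direction $e_j$ and integrate over $\D$, the resulting identity contains the boundary term
\begin{equation*}
\int_{\partial\D}\Bigl[\tfrac12\,|\nabla u_n|^2\,\nu_j-(\partial_\nu u_n)(\partial_j u_n)\Bigr]
=\int_{\partial\D}\Bigl[\tfrac{(\partial_\tau u_n)^2-(\partial_\nu u_n)^2}{2}\,\nu_j-(\partial_\nu u_n)(\partial_\tau u_n)\,\tau_j\Bigr],
\end{equation*}
and for a boundary bubble of scale $\lambda_n$ each of these quadratic terms is of order $\lambda_n^{-1}\to+\infty$ (e.g.\ $\int_{\partial\D}(\partial_\nu u_n)^2\sim 4h(p)^2\int_{\partial\D}e^{u_n}\sim \lambda_n^{-1}$). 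The identity is then a cancellation of divergent quantities, and the finite part carrying $\nabla\Phi(p)$ sits at a lower order that the $C^{0,\alpha}$ control of part ii) cannot reach (one would need sharp $C^1$ asymptotics of $\psi_n$ with a rate). This is not mere bookkeeping: with constant fields the leading-order information is lost. The cure, and what the paper does, is to test only with conformal Killing fields that are \emph{tangent} to $\partial\D$ --- the rotation field $F=(-y,x)$ for the tangential component of $\nabla\Phi(p)$, and the holomorphic field $F(z)=1-z^2$ (the generator of the M\"obius flow fixing $p$) for the normal component. For such $F$ one has $F\cdot\nu=0$ on $\partial\D$, so the divergent $|\nabla u_n|^2\,F\cdot\nu$ term disappears, and the Cauchy--Riemann equations kill the bulk term $DF(\nabla u_n,\nabla u_n)-(\nabla\cdot F)\,|\nabla u_n|^2/2$; what survives is exactly $\int_\D e^{u_n}\nabla K_n\cdot F$ against $\int_{\partial\D}(h_n)_\tau(F\cdot\tau)\,e^{u_n/2}$, i.e.\ a Kazdan--Warner identity. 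Finally, the harmonic extension does not enter through a far-field harmonicity argument: the half-Laplacian $(-\Delta)^{1/2}h(p)=\partial_\nu H(p)$ emerges as the principal-value limit of $\int_{\partial\D}(h_n-h_n(p))\,f_n$, where $f_n$ is the explicit kernel produced by the trace of the bubble, which converges to $-\tfrac{1}{2(1-x)}$ away from $p$ --- the bubble acts as an approximation of the kernel of the Dirichlet-to-Neumann operator. You would need to carry out this singular-integral limit (splitting $\partial\D$ into the regions at distance $\lesssim 1-\lambda_n$, between $1-\lambda_n$ and $\varepsilon$, and $\gtrsim\varepsilon$ from $p$) to make the nonlocal term appear rigorously.
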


\medskip 

Let us observe that condition \eqref{cond-nir2} above is equivalent to:
$$  h_\tau(p) =- \frac{K_\tau(p)}{2\Phi(p)} ,\quad   (-\Delta)^{1/2} h(p) =- \frac{K_\nu(p)}{2 \Phi(p)}.$$

Here $\tau$ denotes the tangent vector to $\partial \D$ at $p$. As a consequence we obtain a compactness criterion: if such a point $p$ does not exist, then the sequence $u_n$ is bounded from above and it is precompact. As mentioned before, this criterion involves local terms on $K$, $h$, but also the half-laplacian of $h$ at the blow-up point.

Observe that if $K=0$, then $h(p)>0$ and condition $iii)$ reads as $h_{\tau}(p)=0$, $(-\Delta)^{1/2}h(p)=0$ (in other words, $\nabla H(p)=0$). Instead, if $h= 0$ then $K(p)>0$ and we obtain that $\nabla K(p)=0$. Even these particular results were not known in the literature.

\medskip 

The proof of Theorem \ref{main} involves a quite detailed blow-up analysis. Indeed one needs good global estimates on the blow-up sequence, and not only local estimates around the blow-up point. By a suitable rescaling, we can pass to a limit problem posed in a half-plane, whose solutions have been classified. But for a global estimate one needs to make use of the conformal group of the disk and make this approach match with the previous rescaling argument. 

In our proofs we have to bypass several technical difficulties since no assumption on the sign of $K$, $h$ is made; the possible compensation of terms gives some troubles when passing to the limit. In general this can be very problematic, see \cite{dmls, dmlsr}. We are able to overcome these difficulties by exploiting the finite mass assumption and using some ideas from \cite{lsmr}.

\begin{remarks}$ $
	
\begin{enumerate}
	
\item Condition \eqref{mass} has a clear geometric interpretation: both the area and the length are assumed to be bounded. This hypothesis is necessary for Theorem \ref{main} to hold. Otherwise, a different phenomenon blow-up could appear, as has been shown in \cite{lsmr} for $K<0$. Observe that if $K>0$ and $h>0$, assumption \eqref{mass} is automatically satisfied by \eqref{GB}. 

\item As commented above, condition \eqref{cond-nir2} is an analogue of \eqref{cond-nir} for problem \eqref{ggn}. It is to be expected that one can use it to give existence results for equation \eqref{gg}. This will be pursued in a future work.

\item Condition \eqref{cond-nir2} admits a variational interpretation, see Remark \ref{interpretation}.

\end{enumerate}

\end{remarks}

The rest of the paper is organized as follows. In Section \ref{sec:prelim} we collect and prove some preliminary results. In particular the limit problems in the disk, in the plane and in the half-plane are considered, together with a Pohozaev-type identity. In Section \ref{sec:blowup} we start the blow-up analysis and we prove $i)$ of the Theorem \ref{main}. This analysis is refined in Section ~\ref{error} by giving a precise description of the asymptotic behavior of $u_n$: namely, assertion $ii)$ of Theorem \ref{main}. In Section \ref{sec:proof} we give the proof of condition $iii)$ in Theorem \ref{main}. Some asymptotic computations have been postponed to a final Appendix, \ref{Appendix}.

\medskip

{\bf Notations.}

Let us fix some notations. The metric distance between two points $z_1,z_2\in\overline{\D}$ will be written as $\dist(z_1,z_2)$. We will denote an open ball centered at a point $p\in\overline{\D}$ of radius $r>0$ as 
$$B_r(p):=\{z\in\overline{\D}:\,\dist(z,p)<r\}.$$

At any point $z=(x,y) \in \partial \D$ we fix a tangent vector $\tau(x,y)=(-y, x)$ or, in complex notation, $\tau(z) = i z$.
We will use the following notation for some subsets of $B_r(p)\subset\R^2$:
\begin{eqnarray*}
B_r^+(p)&:=&\left\{z=(x,y)\in \R^2:\,\dist(z,p)<r,\,y\geq0\right\};\\
\G_r(p)&:=&\left\{(x,y)\in\de B_r^+(p):\,y=0\right\};\\
\de^+B_r(p)&:=&\de B_r^+(p)\setminus\G_r(p).
\end{eqnarray*}
The same notions will be used in $\mathbb{R}^2$ with its underlying metric.

In the estimates we will denote $C$ as a positive constant, independent of the parameters, that may vary from line to line. If some dependence respect to certain parameters must be pointed out, we will indicate it in the subscript, such as $C_\varepsilon$ or $C_{\varepsilon,\delta}$.

\

\section{Preliminaries} \label{sec:prelim}
\setcounter{equation}{0}

In this section we collect and derive some useful results which will be used later on. 

\subsection{The limit problem in the disk}

We are devoted to the properties of the limit problem:
\begin{equation} \label{limit}
\left\{\begin{array}{ll}
\displaystyle{-\Delta v = 2K_0e^v } \qquad & \text{in $\D$}, \\
\displaystyle{\frac{\partial v}{\partial \nu} +2 = 2h_0 e^{v/2}} \qquad  &\text{on $\partial \D$},
\end{array}\right.
\end{equation}
where $K_0$, $h_0$ are real constants. The content of this section is rather known, but we have not been able to find a specific reference. 

Let us introduce the group of conformal maps of the disk, namely:
\begin{equation} \label{conformal group} \mathcal{G}:= \{ e^{i \theta} f_a : \oD \to \oD;  \ \theta \in [0, 2 \pi), \ f_a(z)= \frac{a+z}{1+ \overline{a}z}, \ a \in \D\}.\end{equation}

It is well-known that problem \eqref{limit} is conformally invariant, i.e., given a solution $v$ of \eqref{limit} and $f \in \mathcal{G}$, then:
$$ v_f(z) := v(f(z)) + 2 \log |f'(z)|,$$
is also a solution of the same problem. In the next lemma we show that the unique solutions of \eqref{limit} are those coming from conformal maps from the disk to the standard surfaces with constant curvatures. 

\begin{lemma} Problem \eqref{limit} admits a solution if and only if:
	\begin{equation} \label{cond}  \mbox{ either }K_0 >0  \ \ \mbox{ or } \ \ K_0 \leq 0 \mbox{ and } h_0 > \sqrt{-K_0}. \end{equation} 
	In such case, all solutions are determined by the formula
	\begin{equation} \label{limitprofile} v_{a}(z):=2\log \left\{\frac{2 \phi_0 (1-|a|^2)}{ \phi_0^2|1 - \overline{a} z|^2 + K_0 |z-a|^2}\right\}, \end{equation}
	where $\phi_0:= h_0 + \sqrt{h_0^2 + K_0}$, and $a \in \D$. 
\end{lemma}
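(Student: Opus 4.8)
My plan is to reduce the problem to the classification of solutions of the corresponding Liouville-type equation on the half-plane, for which classification results are available. First I would note that the two boundary alternatives in \eqref{cond} are exactly the conditions under which $\phi_0 = h_0 + \sqrt{h_0^2+K_0}$ is a well-defined positive real number, so that the profile \eqref{limitprofile} makes sense: if $K_0>0$ the square root is real regardless of the sign of $h_0$, while if $K_0\le 0$ one needs $h_0^2 \ge -K_0$ and $h_0>0$, i.e. $h_0 > \sqrt{-K_0}$. A direct (if tedious) computation then verifies that $v_a$ in \eqref{limitprofile} solves \eqref{limit}: one checks $-\Delta v_a = 2K_0 e^{v_a}$ in $\D$ and the Robin-type boundary condition on $\partial\D$. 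The cleanest way to organize this is to first treat the case $a=0$, where $v_0(z) = 2\log\frac{2\phi_0}{\phi_0^2 + K_0|z|^2}$ and the verification is an elementary radial computation, and then observe that $v_a = (v_0)_{f_{-a}}$ is obtained from $v_0$ by the conformal map $f_{-a}\in\mathcal{G}$, hence is again a solution by the conformal invariance recalled just before the lemma.

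For the converse — that \eqref{cond} is necessary and that every solution has the form \eqref{limitprofile} — I would transplant the problem to the upper half-plane $\R^+\times\R$ via a Möbius transformation $\Psi:\D\to \overline{\R^+\times\R}$ (e.g. a Cayley-type map sending $\partial\D$ to the real axis), setting $w = v\circ\Psi + 2\log|\Psi'|$. Conformal invariance turns \eqref{limit} into
\begin{equation*}
\left\{\begin{array}{ll}
-\Delta w = 2K_0 e^{w} & \text{in } \R^+\times\R,\\[1mm]
\dfrac{\partial w}{\partial\nu} = 2h_0 e^{w/2} & \text{on } \{0\}\times\R,
\end{array}\right.
\end{equation*}
(the constant $+2$ disappears because $\partial\D$ has geodesic curvature $1$ while the line is geodesic). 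If $v$ has finite mass — which one should extract either from the hypotheses in the application of this lemma, or by observing that a solution coming from the blow-up analysis automatically carries finite area and length — then $w$ has finite area and boundary length, and the classification of such solutions (the half-plane analogues of Chen–Li, available in the references \cite{galvez, li-zhu, Zhang} cited in the introduction for exactly the half-plane case) forces $w$ to be, up to translation and dilation, the standard bubble; in particular finiteness of the total mass already forces $\int e^{w} + \int e^{w/2} = 2\pi$ by Gauss–Bonnet, and a solution exists only when the sign conditions \eqref{cond} hold. Pulling the standard half-plane bubble back through $\Psi^{-1}$ and composing with a general element of $\mathcal{G}$ recovers precisely the two-parameter family \eqref{limitprofile}, since $\dim\mathcal{G}=3$ but the family is invariant under the one-parameter stabilizer of a boundary point, leaving the two parameters $a\in\D$.

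The main obstacle I anticipate is the rigor of the "necessity and uniqueness" half: one must be careful about which regularity/integrability hypothesis is being assumed on the solution $v$ of \eqref{limit}, since without any growth or finiteness assumption there are non-classified solutions (as in the interior Liouville equation). In the context where this lemma is used, the solutions arise as rescaled limits of the blow-up sequence and hence inherit finite mass, so the honest statement is: under a finite-mass assumption, \eqref{cond} is necessary and \eqref{limitprofile} exhausts all solutions. I would therefore either state the lemma with that caveat or invoke the half-plane classification that already builds in finite energy. The sufficiency direction (the explicit verification that \eqref{limitprofile} solves \eqref{limit}) is routine algebra and I would only sketch the $a=0$ case, deducing the general case from conformal invariance.
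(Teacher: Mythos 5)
Your argument is correct in substance, but it follows a genuinely different route from the paper. The paper's proof is geometric: it normalizes $K_0\in\{0,\pm1\}$, observes that $(\D,e^v dz)$ has constant Gaussian curvature $K_0$ and constant boundary geodesic curvature $h_0$, uses simple connectedness to promote the local isometry to a global isometry onto a metric disk $U$ in $\Sd$, $\Hd$ or $\R^2$ (the constraints on such disks give \eqref{cond} directly), and then reads off \eqref{limitprofile} from the classification of conformal maps $(\D,dz)\to U$. You instead transplant \eqref{limit} to the half-plane by a Cayley map and invoke the analytic classification of finite-mass solutions of \eqref{limitproblem} from \cite{li-zhu, Zhang} (which the paper itself recalls in Subsection~\ref{sublimit}); both the necessity of \eqref{cond} and the exhaustion of solutions by \eqref{entirehp} then pull back to the disk. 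This is a legitimate alternative: it trades the geometric classification of conformal disks for an analytic classification theorem, at the cost of having to match the pulled-back bubbles with the explicit formula \eqref{limitprofile} (you only give a dimension count; the identification should be checked, though the paper is equally brief at the corresponding step). One point where your write-up is needlessly cautious: no finite-mass caveat needs to be added to the lemma. A solution of \eqref{limit} is by definition regular up to $\partial\D$ (otherwise the Neumann condition is meaningless), so $e^v$ is bounded on $\overline{\D}$ and both $\int_{\D}e^{v}$ and $\int_{\partial\D}e^{v/2}$ are automatically finite; since the conformal change preserves area and length, the transplanted $w$ satisfies the integrability hypothesis of \eqref{limitproblem} for free, with no appeal to how the lemma is later applied. (Also, the Gauss--Bonnet identity you quote should carry the factors $K_0$ and $h_0$, as in \eqref{GB}, but this is immaterial to the argument.)
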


\begin{proof}
	If $K_0 \neq 0$, by considering the function $v + \log |K_0|$, we pass to a problem with $K_0 = \pm 1 $. Then, we can restrict ourselves to the cases $K_0 = \pm 1$ or $K_0=0$. Observe that under the metric $g= e^v dz$, $\D$ has constant gaussian curvature equal to $K_0$. Hence it is locally isometric to compact subdomains of $\Sd$, $\Hd$ or $\R^2$, depending on the case $K_0=1$, $K_0=-1$, or $K_0=0$, respectively.
	
	Observe also that $\partial \D$ has constant geodesic curvature equal to $h_0$, and this property translates via the local isometry. Since $\D$ is simply connected, we conclude that the local isometry is a global one. In ther words, we have a global isometry $\Upsilon: \D \to U$, where $U$ is a subdomain of either  $\Sd$, $\Hd$ or $\R^2$, with geodesic curvature equal to $h_0$. 
	
	Observe that if the ambient domain is $\R^2$, then $h_0$ needs to be strictly positive. Moreover, in $\Hd$ such domains are bounded only if $h_0 >1$. As a consequence we obtain \eqref{cond}.
	
	In other words, $U$ is a disk in either $\Sd$, $\Hd$ or $\R^2$ and, by composing with a symmetry, we can assume that:
	$$ U = \left \{ \begin{array}{lr} \left \{(x_1, x_2, x_3 ) \in \Sd:\ x_3 \geq \frac{h_0}{\sqrt{1+h_0^2}} \right \} & \mbox{ if } K_0=1, \\ \\  \left \{(x_1, x_2) \in \Hd:\ \sqrt{x_1^2 + x_2^2} \leq h_0 - \sqrt{h_0^2-1} \right \} & \mbox{ if } K_0=-1, \\ \\ \left \{(x_1, x_2) \in \R^2:\ \sqrt{x_1^2 + x_2^2} \leq 1/h_0 \right \} & \mbox{ if } K_0=0.  \end{array} \right.$$
Above we have expressed $\Hd$ via coordinates in the Poincar\'{e} disk.
	
	On the other hand, the identity map $I: (\D, dz) \to (\D, e^v dz)$ is clearly a conformal map. As a consequence, the composition $\Upsilon \circ I: (\D, dz) \to U$ is a conformal map. Those maps are classified, and hence the conformal factor $e^v$ is given by one of those maps. This gives the expression \eqref{limitprofile}.
\end{proof}

\begin{remark} \label{rem rad} In particular, there exists a unique solution to \eqref{limit} satisfying the extra condition:
$$ \int_{\D} x e^{v(z)} =0, \ \int_{\D} y e^{v(z)} =0,$$
which is just the solution given in \eqref{limitprofile} for $a=0$, that is:
\begin{equation} \label{radialprofile} v_0(z) = 2\log \left\{\frac{2 \phi_0}{ \phi_0^2 + K_0 | z |^2}\right\}.\end{equation}
\end{remark}

Next lemma addresses the question of nondegeneracy of such solution:

\begin{lemma} \label{lema linear 1} Let us consider the linearized problem:
	\begin{equation} \label{linear}
	\left\{\begin{array}{ll}
	\displaystyle{-\Delta \psi = 2K_0 e^{v_0} \psi } \qquad & \text{in $\D$}, \\
	\displaystyle{\frac{\partial \psi}{\partial \nu} =2 h_0 e^{v_0/2} \psi } \qquad  &\text{on $\partial \D $},
	\end{array}\right.
	\end{equation}
	where $v_0$ is given in \eqref{radialprofile}. Then the vector space of solutions of \eqref{linear} is spanned by the functions:
	$$ \psi_1(z):= \frac{x}{\phi_0^2 + K_0 |z|^2}, \ \psi_2(z):= \frac{y}{\phi_0^2+ K_0|z|^2},$$
	where $\phi_0:=h_0+\sqrt{h_0^2+K_0}$.
	In particular, a solution of \eqref{linear} satisfying the extra assumptions:
	$$ \int_{\D} x e^{v_0(z)} \psi(z) =0, \ \int_{\D} y  e^{v_0(z)} \psi(z) =0,$$
must be necessarily equal to $0$.
\end{lemma}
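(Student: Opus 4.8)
The plan is to classify the kernel of the linearized operator by exploiting the conformal invariance of the nonlinear problem \eqref{limit}. First I would observe that the three-parameter group $\mathcal{G}$ acting on solutions of \eqref{limit} produces, by differentiation at the identity, three elements of the kernel of the linearized problem \eqref{linear} around $v_0$: differentiating $v_0$ composed with the conformal maps $f_{te_1}$, $f_{te_2}$ (translations along the two coordinate directions in the parameter $a\in\D$) and $e^{it}\,\mathrm{id}$ (rotations) at $t=0$. The rotation generates a function proportional to $\partial_\theta v_0$, but since $v_0$ is radial this derivative vanishes identically, so only two independent solutions arise from $\mathcal{G}$, corresponding to the two translation directions.

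Next I would compute these two generators explicitly. Writing $v_a$ as in \eqref{limitprofile} and differentiating with respect to $a$ at $a=0$ (taking real and imaginary parts, or equivalently differentiating along $a=t$ and $a=it$), a direct calculation gives — up to nonzero multiplicative constants — exactly the functions
$$\psi_1(z)=\frac{x}{\phi_0^2+K_0|z|^2},\qquad \psi_2(z)=\frac{y}{\phi_0^2+K_0|z|^2}.$$
One then checks directly (or simply invokes that they come from differentiating a family of solutions) that $\psi_1,\psi_2$ solve \eqref{linear}. So the span of $\psi_1,\psi_2$ is contained in the kernel, and it remains to prove the reverse inclusion, i.e. that \eqref{linear} has at most a two-dimensional solution space.

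For the upper bound on the dimension I would pass, via the isometry/conformal map $\Upsilon$ of the previous lemma, to the corresponding linearized problem on the model domain $U\subset\Sd$, $\Hd$, or $\R^2$ — a geodesic disk with constant geodesic curvature boundary. Equivalently, one can argue directly on $\D$ using separation of variables: expand a kernel element $\psi$ in Fourier modes $\psi(r,\theta)=\sum_k \psi_k(r)\cos(k\theta) + \tilde\psi_k(r)\sin(k\theta)$. For each $k$ the radial ODE with the Robin-type boundary condition at $r=1$, together with regularity at $r=0$, has a solution space of dimension at most one; a Sturm-type / explicit-ODE analysis (the equation is a hypergeometric-type ODE whose bounded solution at the origin is $r^k$ times a known factor) shows that the boundary condition is satisfied only for $k=1$, giving precisely the two solutions $\psi_1,\psi_2$, while $k=0$ and $k\ge 2$ force $\psi_k\equiv 0$. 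This yields $\dim\ker=2$ and hence the claimed characterization.

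The main obstacle I expect is the $k=0$ (and possibly $k\ge 2$) mode analysis: one must rule out a radial element of the kernel, and more generally show the boundary condition is incompatible with regularity for every $k\ne 1$. This requires actually solving or carefully analyzing the radial ODE in each of the three cases $K_0=1,0,-1$ (after the normalization $K_0=\pm1$ or $0$ used in the previous lemma), checking the sign of the relevant boundary term. The rest — producing $\psi_1,\psi_2$ and verifying they lie in the kernel — is routine differentiation of \eqref{limitprofile}. The final ``in particular'' statement is then immediate: any kernel element is $c_1\psi_1+c_2\psi_2$, and the two orthogonality conditions $\int_\D x e^{v_0}\psi = \int_\D y e^{v_0}\psi = 0$ form a linear system in $(c_1,c_2)$ whose matrix has entries $\int_\D \frac{x^2 e^{v_0}}{\phi_0^2+K_0|z|^2}$, etc.; by radial symmetry the off-diagonal entries vanish and the diagonal entries are strictly positive, forcing $c_1=c_2=0$.
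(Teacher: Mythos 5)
Your proposal is correct in substance and overlaps only partially with the paper's argument, so a comparison is in order. The paper's proof is very short: it transports the linearized problem, via the conformal map $\Upsilon$ constructed in the previous lemma, to the constant-curvature model (for $K_0=1$, the spherical cap $U=\{x_3\geq h_0/\sqrt{1+h_0^2}\}\subset\Sd$), where it becomes $-\Delta \psi=2\psi$ in $U$ with a constant Robin condition on $\partial U$, and then invokes as ``well known'' that the ambient coordinate functions $x_1,x_2$ span the solution space; pulling these back gives $\psi_1,\psi_2$. Your first route for producing $\psi_1,\psi_2$ --- differentiating the family $v_a$ of \eqref{limitprofile} in $a$ at $a=0$ --- is equivalent (the coordinate functions on the cap are exactly the generators of the Möbius action) but more self-contained and verifiable by direct computation; note that the rotation generator vanishes because $v_0$ is radial, which you correctly observe. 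For the reverse inclusion, your Fourier-mode/ODE analysis is precisely the standard way one actually \emph{proves} the fact the paper cites, so your sketch is not less complete than the paper's: in both cases the crux --- ruling out kernel elements in the modes $k=0$ and $k\geq2$ by analyzing the radial ODE with the Robin condition at $r=1$ and regularity at $r=0$, in each of the three normalizations $K_0=\pm1,0$ --- is asserted rather than written out. Your treatment of the final ``in particular'' claim (the Gram matrix of $\psi_1,\psi_2$ against $xe^{v_0},ye^{v_0}$ is diagonal with positive entries, since $\phi_0^2+K_0|z|^2>0$ on $\oD$ under \eqref{cond}) is complete and matches what the paper leaves implicit. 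One incidental remark: the boundary condition in \eqref{linear} as printed, $\partial\psi/\partial\nu=2h_0e^{v_0/2}\psi$, carries a spurious factor of $2$ relative to the actual linearization of $2h_0e^{v/2}$ (which gives $h_0e^{v_0/2}\psi$, consistent with $\partial_\nu x_i=h_0x_i$ on the cap); your derivation via differentiating the solution family produces elements of the correctly linearized kernel, which is clearly what is intended.
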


\begin{proof}
	
	We show the proof only in the case $K_0 =1$, the other cases being analogous. As shown in the previous lemma, $(\D, e^{v_0} dz)$ is isomorphic to a spherical cap in $\Sd$, given by $U=\{(x_1, x_2, x_3 ) \in \Sd:\ x_3 \geq \frac{h_0}{\sqrt{1+h_0^2}}\}$. Via that conformal map, problem \eqref{linear} becomes:
	\begin{equation}
	\left\{\begin{array}{ll}
	\displaystyle{-\Delta \psi = 2 \psi } \qquad & \text{in $U$}, \\
	\displaystyle{\frac{\partial \psi}{\partial \nu} =2 h_0  \psi } \qquad  &\text{on $\partial U $.}
	\end{array}\right.
	\end{equation}
	
	As it is well known, the coordinate functions $x$, $y$ are generators of the vector space of solutions of that problem. Via the conformal map, this translates into $\psi_1$, $\psi_2$.
\end{proof}

Next lemma is a standard regularity result:

\begin{lemma} \label{regularity} Let $\psi$ be a solution of
	\begin{equation} 
\left\{\begin{array}{ll}
\displaystyle{-\Delta \psi =  c(z)} \qquad & \text{in $\D$}, \\
\displaystyle{\frac{\partial \psi}{\partial \nu} =  d(z)} \qquad  &\text{on $\partial \D $.}
\end{array}\right.
\end{equation}
Then for any $q>1$ there exists $C_q>0$ such that  
$$ \|\psi\|_{W^{1+1/q, q}(\D)} \leq C_q (\| c \|_{L^q(\D)} + \| d \|_{L^q(\partial \D)} ).$$
\end{lemma}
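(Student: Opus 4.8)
The final statement is Lemma 1.8 (the regularity result): elliptic estimates for the Neumann problem on the disk in the scale of fractional Sobolev spaces.

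The plan is to reduce the inhomogeneous Neumann problem to known estimates by a combination of a lifting argument and interior/boundary elliptic regularity. First I would split $\psi = \psi_1 + \psi_2$, where $\psi_1$ solves the problem with the interior datum and zero Neumann boundary condition, $-\Delta \psi_1 = c$ in $\D$, $\partial_\nu \psi_1 = 0$ on $\partial \D$, and $\psi_2$ solves the harmonic problem with the boundary datum, $-\Delta \psi_2 = 0$ in $\D$, $\partial_\nu \psi_2 = d$ on $\partial \D$. (One should subtract the mean of $c$ and $d$ to ensure solvability of the Neumann problem, but since we only estimate the homogeneous Sobolev seminorm or normalize $\psi$, this is harmless; alternatively one works modulo constants.) For $\psi_1$, standard $L^q$ elliptic regularity (Calderón–Zygmund theory, e.g. Agmon–Douglis–Nirenberg) up to the boundary gives $\|\psi_1\|_{W^{2,q}(\D)} \leq C_q \|c\|_{L^q(\D)}$, which embeds into $W^{1+1/q,q}(\D)$ since $2 \geq 1 + 1/q$ for $q > 1$. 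For $\psi_2$, I would use the fact that the Dirichlet-to-Neumann map on the disk is (essentially) $(-\Delta)^{1/2}$ on $\partial \D$, so $\psi_2|_{\partial \D} \in W^{1,q}(\partial \D)$ when $d \in L^q(\partial \D)$, and then the harmonic extension maps $W^{1,q}(\partial \D)$ into $W^{1+1/q,q}(\D)$ by the trace theorem for fractional Sobolev spaces; this yields $\|\psi_2\|_{W^{1+1/q,q}(\D)} \leq C_q \|d\|_{L^q(\partial \D)}$.

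Alternatively, and perhaps more cleanly, I would invoke the explicit Green's function / Poisson-type kernel representation for the Neumann problem on the disk and estimate the resulting integral operators by fractional integration (Hardy–Littlewood–Sobolev) and Mikhlin-type multiplier bounds on $\partial \D$; the mapping properties in the Sobolev scale are then read off directly. Either route gives the claimed inequality with a constant $C_q$ depending only on $q$ (and blowing up as $q \to 1^+$ or $q \to \infty$).

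The main obstacle is purely bookkeeping: tracking the precise fractional Sobolev exponent $1+1/q$ through the boundary term and making sure the trace/extension theorem is applied with the correct indices (the trace of $W^{1+1/q,q}(\D)$ on $\partial \D$ is $W^{1,q}(\partial \D)$, which must match the regularity produced by inverting the Dirichlet-to-Neumann operator). There is no conceptual difficulty — everything is classical linear elliptic theory — so in the paper I expect this lemma to be stated with a brief indication and a reference (e.g. to ADN or to a standard text on elliptic boundary value problems), rather than a detailed proof.
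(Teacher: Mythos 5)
The paper does not prove this lemma at all: it is introduced with the single sentence ``Next lemma is a standard regularity result'' and then used as a black box, exactly as you predicted. Your sketch is a correct way to supply the missing argument, and both routes you describe (splitting off the interior datum and handling the boundary datum via the Dirichlet-to-Neumann map, or going directly through the Neumann Green's function / single-layer potential and reading off the mapping properties) are standard. Two small points are worth flagging. First, the inequality as stated cannot hold literally, since constants solve the homogeneous problem; one must read it modulo constants or under a normalization such as $\int_{\D}\psi=0$ -- you essentially note this. Second, in the first route there is a genuine endpoint subtlety: inverting the Dirichlet-to-Neumann operator on $L^q(\partial\D)$ and then invoking the trace/extension theorem for $W^{1+1/q,q}(\D)$ lands you in the Besov space $B^{1}_{q,q}(\partial\D)$ rather than $W^{1,q}(\partial\D)$ (these differ for $q\neq 2$ at integer smoothness), so the exponents do not match up quite as cleanly as written. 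The layer-potential route avoids this entirely, since the single layer potential maps $L^q(\partial\D)$ boundedly into $W^{1+1/q,q}(\D)$ for $1<q<\infty$, which is precisely the assertion of the lemma; I would therefore lead with that argument if a proof were to be written out.
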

	
The next result is a quantitive version of the nondegeneracy result stated in Lemma \ref{lema linear 1}.

\begin{lemma} \label{lema linear 2} For any $q>1$ there exists $C_q>0$ such that the following holds: for any solution $\psi$ of the nonhomogeneous linearized problem
	\begin{equation} \label{linearbis}
	\left\{\begin{array}{ll}
	\displaystyle{-\Delta \psi = 2K_0 e^{v_0} \psi + c(z)} \qquad & \text{in $\D$}, \\
	\displaystyle{\frac{\partial \psi}{\partial \nu} = 2h_0 e^{v_0/2} \psi + d(z)} \qquad  &\text{on $\partial \D $},
	\end{array}\right.
	\end{equation}
satisfying 
	\begin{equation} \label{baricentro} \int_{\D} x e^{v_0(z)} \psi(z) =0, \ \int_{\D} y  e^{v_0(z)} \psi(z) =0,\end{equation}
we have that 
	$$ \|\psi\|_{W^{1+1/q, q}(\D)} \leq C_q (\|c\|_{L^q(\D)} + \|d\|_{L^q(\partial \D)} ).$$
\end{lemma}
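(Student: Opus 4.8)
The plan is to argue by contradiction, combining the standard a priori bound of Lemma \ref{regularity} with the nondegeneracy statement of Lemma \ref{lema linear 1} via a compactness argument. Suppose the conclusion fails. Then there exist $q>1$ and sequences of solutions $\psi_n$, data $c_n$, $d_n$ of \eqref{linearbis} satisfying \eqref{baricentro}, with
\[ \|\psi_n\|_{W^{1+1/q,q}(\D)} = 1, \qquad \|c_n\|_{L^q(\D)} + \|d_n\|_{L^q(\partial \D)} \to 0. \]
(The normalization is legitimate: if some $\psi_n$ vanished identically the inequality would be trivial for that $n$, so we may divide through by the left-hand norm.) The source terms in the equation for $\psi_n$ are then $2K_0 e^{v_0}\psi_n + c_n$ in the interior and $2h_0 e^{v_0/2}\psi_n + d_n$ on the boundary. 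Since $v_0$ is smooth and bounded on $\oD$ and $\{\psi_n\}$ is bounded in $W^{1+1/q,q}(\D)$, hence in $C^{0}(\oD)$ by Sobolev embedding (for $q$ possibly enlarged; note $W^{1+1/q,q}\hookrightarrow C^{0,\beta}$ for suitable $\beta$), these source terms are bounded in $L^q(\D)$ and $L^q(\partial\D)$ respectively.

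Next I would apply Lemma \ref{regularity} to $\psi_n$ to get a uniform bound on $\|\psi_n\|_{W^{1+1/q,q}(\D)}$ in terms of these source norms — which we already know, but the point is to bootstrap: by the compact embedding $W^{1+1/q,q}(\D) \hookrightarrow\hookrightarrow C^{0,\beta}(\oD)$ for small $\beta$, we may pass to a subsequence with $\psi_n \to \psi$ in $C^{0,\beta}(\oD)$, and in particular in $L^q(\D)$ and in $L^q(\partial\D)$ after taking traces. Then $2K_0 e^{v_0}\psi_n + c_n \to 2K_0 e^{v_0}\psi$ in $L^q(\D)$ and similarly on the boundary, so applying Lemma \ref{regularity} to the differences $\psi_n - \psi_m$ shows $\{\psi_n\}$ is Cauchy in $W^{1+1/q,q}(\D)$; hence $\psi_n \to \psi$ strongly in $W^{1+1/q,q}(\D)$, so $\|\psi\|_{W^{1+1/q,q}(\D)} = 1$ and in particular $\psi \not\equiv 0$. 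Passing to the limit in the weak formulation of \eqref{linearbis}, $\psi$ solves the homogeneous linearized problem \eqref{linear}. Passing to the limit in \eqref{baricentro} (using $L^1$ convergence of $e^{v_0}\psi_n$, say, which follows from uniform convergence) gives $\int_\D x e^{v_0}\psi = \int_\D y e^{v_0}\psi = 0$. By Lemma \ref{lema linear 1} this forces $\psi \equiv 0$, contradicting $\|\psi\|_{W^{1+1/q,q}(\D)}=1$.

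The main obstacle, and the only genuinely delicate point, is the passage to the limit together with the upgrade from weak to strong convergence: one must be careful that the embedding constants and the exponent $\beta$ in $W^{1+1/q,q}(\D)\hookrightarrow C^{0,\beta}(\oD)$ behave correctly for all $q>1$ (for $q$ near $1$ the embedding into a Hölder space may fail, so one should instead use the compact embedding into $L^r(\D)$ and $L^r(\partial\D)$ for suitable $r>q$, which is enough to conclude convergence of the source terms in $L^q$). Everything else — the normalization, the bootstrap via Lemma \ref{regularity}, the identification of the limit as a solution of \eqref{linear}, and the use of Lemma \ref{lema linear 1} — is routine. One should also note that the constant $C_q$ obtained this way is not explicit, but that is immaterial for the statement.
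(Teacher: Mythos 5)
Your proof is correct and follows essentially the same route as the paper's: a contradiction argument with a normalized sequence, compact Sobolev/trace embeddings to get strong $L^q$ convergence of the source terms, Lemma \ref{regularity} to upgrade to strong $W^{1+1/q,q}$ convergence of the sequence (so the limit has norm one), and Lemma \ref{lema linear 1} to force the limit to vanish. The only differences are cosmetic (you normalize $\|\psi_n\|=1$ rather than $\|c_n\|+\|d_n\|=1$, and use a Cauchy-sequence formulation of the bootstrap); your caution about the H\"older embedding near $q=1$ is unnecessary since $1+1/q-2/q=1-1/q>0$ for all $q>1$, but the $L^r$ fallback you offer works in any case.
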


\begin{proof} Consider the solution $\psi_n$ to the problem 
\begin{equation*}
	\left\{\begin{array}{ll}
	\displaystyle{-\Delta \psi_n = 2K_0 e^{v_0} \psi_n + c_n(z)} \qquad & \text{in $\D$}, \\
	\displaystyle{\frac{\partial \psi_n}{\partial \nu} = 2h_0 e^{v_0/2} \psi_n + d_n(z)} \qquad  &\text{on $\partial \D $.}
	\end{array}\right.
	\end{equation*}
	We assume that the solutions $\psi_n$ are reasonably smooth, otherwise one can argue by density. Reasoning by contradiction, suppose that there exists $c_n$, $d_n$ and solutions $\psi_n$ with 
$$\|c_n\|_{L^q(\D)} + \|d_n\|_{L^q(\partial\D)}=1,\qquad \| \psi_n \|_{W^{1+1/q,q}(\D)} \to +\infty.$$
Define $\tilde{\psi}_n := \|\psi_n\|_{W^{1+1/q,q}(\D) }^{-1}\psi_n$. Up to a subsequence, we can assume that $\tilde{\psi_n} \weakto \tilde{\psi_0}$ in $W^{1+1/q,q}(\D)$. Clearly, by compactness,
	$$ 2 K_0 e^{v_0} \tilde{\psi}_n +\frac{c_n}{\|\psi_n\|_{W^{1+1/q,q}(\D)}} \to  2 K_0 e^{v_0} \tilde{\psi}_0 \ \mbox{ in } L^q(\D), $$ $$ 2h_0 e^{v_0/2} \tilde{\psi}_n +\frac{d_n}{\|\psi_n\|_{W^{1+1/q,q}(\D)}}  \to  2h_0 e^{v_0/2} \tilde{\psi}_0 \ \mbox{ in } L^q(\partial \D).$$
By Lemma \ref{regularity} we conclude $\tilde{\psi}_n \to \tilde{\psi}_0$ in  $W^{1+1/q,q}(\D)$. In particular, $\| \tilde{\psi}_0\|_{W^{1+1/q, q}(\D)}=1$. Moreover, if $\psi_n$ satisfy \eqref{baricentro}, so it does $\tilde{\psi}_0$. By Lemma \ref{lema linear 1} we conclude that $\tilde{\psi}_0=0$, a contradiction.
\end{proof}

\subsection{The limit problem in the plane and the half-plane}\label{sublimit}

As it is well known, the study of blowing-up solutions of a nolinear PDE can be put in relation with limit problems in the entire space or in half-spaces. The classification of the finite mass solutions of Liouville problems was first made in \cite{C-L} in the whole space $\R^2$, and is rather known. Indeed, the unique solutions of the problem:
\begin{equation}\label{limit2}
-\Delta v = 2 K_0 e^v \quad \text{in $\mathbb{R}^2 $} , \qquad  \int_{\R^2} e^v  < +\infty,
\end{equation}
are given by the expression:
\beq\label{entirehp2}
v(x)= 2 \log \left\{ \frac{2 \lambda}{ K_0 \lambda^2 + |x-x_0|^2} \right\},\ \ x_0 \in \R^2, \ \lambda >0.
\eeq
All those solutions are known to satisfy the quantization property:
\begin{equation} \label{quantization2} K_0 \int_{\R^2} e^v = 4 \pi. \end{equation}

Since our problem considers a boundary condition, we will also be interested in this paper in the solutions of the problem posed in the halfplane: that is, given real constants $K_0, h_0$, the problem
\begin{equation}\label{limitproblem}
\left\{\begin{array}{ll}
-\Delta v = 2 K_0 e^v \qquad & \text{in $\mathbb{R}^2_+ $,} \vspace{0.3cm} \\
\frac{\partial v}{\partial \nu} = 2 h_0 e^{\frac{v}2} \qquad &\text{on $\partial\mathbb{R}^2_+ $,}
\end{array}\right. \qquad  \int_{\R^2_+} e^v + \int_{\partial \R^2_+} e^{v/2} < +\infty.
\end{equation}
It is known (see \cite{li-zhu, Zhang}) that \eqref{limitproblem} is solvable for any $K_0 >0$, and also if $K_0 \leq 0$ and $h_0>\sqrt{-K_0}$. In any case, any solution is given by the expression:
\beq\label{entirehp}
v(w_1,w_2)= 2 \log \left\{ \frac{2 \lambda}{ K_0 \lambda^2 + (w_1-w_0)^2+(w_2+\lambda h_0)^2} \right\},\ \ w_0 \in \R, \ \lambda >0,
\eeq
that satisfies
\begin{equation} \label{quantization} h_0 \int_{\partial \R^2_+} e^{v/2} = \beta, \ \ \ K_0 \int_{\R^2_+} e^v = 2 \pi- \beta, \end{equation}
where $\beta$ is given by 
\beq\label{prebeta}
\displaystyle{\beta:=2\pi  \frac{h_0}{\sqrt{h_0^2+K_0}}  }.
\eeq

\subsection{Kazdan-Warner type conditions}
In this subsection we state and prove the Kazdan-Warner conditions for problem \eqref{gg}. There is a version already available for boundary problems in this way, posed in the half-sphere, see \cite{hamza}. To keep the paper self-contained we give our own version of it, which suits for our later purposes.

Let us recall the following Pohozaev-type identity, depending on an arbitrary field $F$.
\begin{lemma} \label{lemapoh} Let $u$ be a solution of \eqref{gg}. Then, given any vector field $F: \overline{ \D} \to \R^2$, there holds:
	\begin{equation*}\begin{split}
	\int_{\partial \D} &[2 K e^u ( F\cdot \nu) + (2 h e^{u/2} - 2) (\nabla u \cdot F) - \frac{|\nabla u|^2}{2} F \cdot \nu]  \\ 
	&= \int_{\D} [2 e^u ( \nabla K \cdot F + K\  \nabla \cdot F ) + DF(\nabla u, \nabla u)  - \nabla \cdot F \frac{|\nabla u|^2}{2}]. \end{split}\end{equation*}

\end{lemma}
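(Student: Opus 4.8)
The plan is to obtain the identity in the usual Pohozaev fashion: multiply the interior equation $-\Delta u = 2Ke^u$ by the test function $\nabla u\cdot F$, integrate over $\D$, and integrate by parts twice, finally using the boundary condition to replace $\partial u/\partial\nu$. I would take $F\in C^1(\overline{\D};\R^2)$ and observe that $u$ is regular enough for all the manipulations by elliptic regularity applied to \eqref{gg} (recall $K\in C^2(\oD)$, $h\in C^2(\partial\D)$), so that no density argument is needed.

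For the left-hand side I would use Green's identity with $\phi=\nabla u\cdot F$, namely $\int_\D(-\Delta u)\phi=\int_\D\nabla u\cdot\nabla\phi-\int_{\partial\D}\frac{\partial u}{\partial\nu}\phi$, together with the pointwise identity $\nabla u\cdot\nabla(\nabla u\cdot F)=\tfrac12\nabla(|\nabla u|^2)\cdot F+DF(\nabla u,\nabla u)$. A second integration by parts of the term $\tfrac12\nabla(|\nabla u|^2)\cdot F$ then yields
$$\int_\D(-\Delta u)(\nabla u\cdot F)=\int_\D\Big[DF(\nabla u,\nabla u)-\tfrac12|\nabla u|^2\,\nabla\cdot F\Big]+\int_{\partial\D}\Big[\tfrac12|\nabla u|^2\,F\cdot\nu-\frac{\partial u}{\partial\nu}(\nabla u\cdot F)\Big].$$
For the right-hand side I would write $2Ke^u\nabla u=2K\nabla(e^u)$ and integrate by parts once:
$$\int_\D 2Ke^u(\nabla u\cdot F)=-\int_\D 2e^u\big(\nabla K\cdot F+K\,\nabla\cdot F\big)+\int_{\partial\D}2Ke^u\,(F\cdot\nu).$$
Equating the two displays (legitimate since $-\Delta u=2Ke^u$), rearranging so that the interior integrals sit on one side and the boundary integrals on the other, and finally inserting the boundary condition $\frac{\partial u}{\partial\nu}=2he^{u/2}-2$ into the term $\int_{\partial\D}\frac{\partial u}{\partial\nu}(\nabla u\cdot F)$, produces exactly the asserted identity.

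There is no genuine obstacle here: the statement is a formal computation valid for any $C^1$ vector field $F$, and the smoothness of solutions of \eqref{gg} makes every step rigorous. The only point demanding a little care is the index bookkeeping in the two successive integrations by parts — in particular getting the sign of the $\tfrac12|\nabla u|^2\,\nabla\cdot F$ term right and recognizing the quadratic remainder as $DF(\nabla u,\nabla u)$ — but this is routine.
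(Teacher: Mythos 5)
Your proof is correct and is exactly the paper's argument: the paper's proof is a one-line reference to multiplying \eqref{gg} by $\nabla u\cdot F$ and integrating by parts (citing \cite[Lemma 5.5]{lsmr} for the details you have written out). The signs and the identification of the quadratic remainder as $DF(\nabla u,\nabla u)$ check out.
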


\begin{proof} The proof follows by multiplying the equation \eqref{gg} by $\nabla u \cdot F$ and integrating by parts (see for instance \cite[Lemma 5.5]{lsmr}).
\end{proof}

From this we can obtain the following Kazdan-Warner identity.

\begin{proposition}\label{KW}
	If $u$ is a solution of \eqref{gg} then
	$$\int_{\D} e^u \, \nabla K \cdot F =4\int_{\partial\D}h_\tau e^{u/2}y,$$
	where $F(x,y):= (1-x^2+y^2, -2xy)$ (with complex notation, $F(z)= 1-z^2$).
\end{proposition}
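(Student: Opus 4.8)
The plan is to apply the Pohozaev-type identity of Lemma~\ref{lemapoh} to the specific field $F(z)=1-z^{2}$, i.e.\ $F(x,y)=(1-x^{2}+y^{2},-2xy)$, and to exploit two structural features of this $F$: it is a \emph{conformal Killing field} of $\R^{2}$ (indeed one of the infinitesimal generators of the conformal group $\mathcal G$ of the disk), and it is \emph{tangent to} $\partial\D$.

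First I would record the two algebraic facts that make the computation collapse. Writing $F=(P,Q)$ with $P=1-x^{2}+y^{2}$, $Q=-2xy$, a direct computation gives $DF+DF^{T}=(\nabla\cdot F)\,\Id$ with $\nabla\cdot F=-4x$; hence $DF(\nabla u,\nabla u)=\tfrac12(\nabla\cdot F)|\nabla u|^{2}$, so the two quadratic gradient terms on the right-hand side of Lemma~\ref{lemapoh} cancel exactly. Next, on $\partial\D$ (where $x^{2}+y^{2}=1$) one checks, with $\nu=(x,y)$ and $\tau=(-y,x)$, that $F\cdot\nu=x(1-x^{2}-y^{2})=0$ and $F\cdot\tau=-y(1+x^{2}+y^{2})=-2y$. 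Therefore the terms carrying the factor $F\cdot\nu$ in the left-hand side of Lemma~\ref{lemapoh} vanish, and decomposing $\nabla u=\tfrac{\partial u}{\partial\nu}\,\nu+u_{\tau}\,\tau$ on $\partial\D$ the only surviving boundary term is $(2he^{u/2}-2)(\nabla u\cdot F)=(2he^{u/2}-2)(-2y)\,u_{\tau}$.

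With these reductions Lemma~\ref{lemapoh} becomes
\[
-2\int_{\partial\D} y\,(2he^{u/2}-2)\,u_{\tau}
\;=\;2\int_{\D}e^{u}\,\nabla K\cdot F\;+\;\int_{\D}2Ke^{u}\,(\nabla\cdot F).
\]
For the last integral I would use the equation $2Ke^{u}=-\Delta u$ and integrate by parts, $\int_{\D}(-\Delta u)(\nabla\cdot F)=\int_{\D}\nabla u\cdot\nabla(\nabla\cdot F)-\int_{\partial\D}\tfrac{\partial u}{\partial\nu}(\nabla\cdot F)$; since $\nabla\cdot F=-4x$ and, on $\partial\D$, $x_{\tau}=-y$, $y_{\tau}=x$, this produces, after the divergence theorem and one tangential integration by parts along the closed curve $\partial\D$, the value $4\int_{\partial\D}y\,u_{\tau}+4\int_{\partial\D}x\,\tfrac{\partial u}{\partial\nu}$. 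It then only remains to treat all the boundary integrals on both sides in the same way: replace $\tfrac{\partial u}{\partial\nu}$ by $2he^{u/2}-2$ using the boundary condition in \eqref{gg}, use $e^{u/2}u_{\tau}=2\,(e^{u/2})_{\tau}$ to integrate the $he^{u/2}u_{\tau}$ terms by parts along $\partial\D$ (producing $(yh)_{\tau}=xh+y\,h_{\tau}$), and discard $\int_{\partial\D}x=0$. The terms $\int_{\partial\D}y\,u_{\tau}$ and $\int_{\partial\D}xh\,e^{u/2}$ then cancel between the two sides, leaving $2\int_{\D}e^{u}\,\nabla K\cdot F=8\int_{\partial\D}h_{\tau}e^{u/2}y$, which is the claim.

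All the conceptual content sits in the two structural observations (conformal Killing and tangency to $\partial\D$): they are what force every otherwise unpleasant term either to cancel or to collapse into the geometric quantity $\int_{\partial\D}h_{\tau}e^{u/2}y$. The only genuine work — and the place where care is needed — is the bookkeeping of the several boundary integrals generated by the integrations by parts: one must consistently decide whether to express a given term through $\tfrac{\partial u}{\partial\nu}$ or through $he^{u/2}$, and invoke the tangential identities $x_{\tau}=-y$, $y_{\tau}=x$ together with $\int_{\partial\D}x=\int_{\partial\D}y=0$ at the right moments so that all spurious terms annihilate. Beyond Lemma~\ref{lemapoh} and the boundary condition in \eqref{gg}, no further ingredient is needed.
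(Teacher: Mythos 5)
Your proposal is correct and follows essentially the same route as the paper: apply Lemma~\ref{lemapoh} with $F(z)=1-z^2$, use holomorphy (the conformal Killing identity) to cancel the quadratic gradient terms and tangency of $F$ to $\partial\D$ to kill the $F\cdot\nu$ terms, and then absorb $\int_{\D}2Ke^{u}(\nabla\cdot F)$ via the equation and integrations by parts on $\D$ and along $\partial\D$ — which is exactly the paper's step of multiplying the PDE by $4x$ and integrating. The only (inessential) difference is that the paper motivates the choice of $F$ as the generator of the conformal flow fixing $p=(1,0)$, whereas you identify it directly as a conformal Killing field tangent to the boundary.
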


\begin{proof}
	
The idea is to consider the variation along the conformal transformations (see \eqref{conformal group}) that keep fixed the point $p=(1,0)$. This corresponds to
	$$f_\lambda(z):=\frac{\lambda+z}{1+\lambda z},$$
	with $\lambda \in (-1,1)$.
	By taking into account the invariance of \eqref{limit}, we define 
	$$v_\lambda(z):=u(f_\lambda(z))+2\log (|f'_\lambda(z)|).$$
	In order to study its variation with respect to $\lambda$ we compute first
	$$F(z):=\frac{d}{d\lambda}f_\lambda(z)\bigg|_{\lambda=0}=\frac{1-z^2}{(1+\lambda z)^2}\bigg|_{\lambda=0}=1-z^2=(1-x^2+y^2,-2xy),$$
	where we have used the standard notation $z=x+iy$. From these identities we can conclude that $$\frac{d}{d\lambda}v_\lambda(z)\bigg|_{\lambda=0}=\nabla u\cdot F -4x.$$
	Hence, we test in the problem with $\nabla u\cdot F-4x$. We first multiply it by $\nabla u \cdot F $, which is just to insert $F$ in Lemma \ref{lemapoh}. Observe that on $\partial \D$, $F$ is a tangential vector field, and $F \cdot \tau = -2y$, where $\tau (x,y)= (-y,x)$. Moreover, since $F$ is holomorphic, the Cauchy-Riemann conditions yield
	$$ DF(\nabla u, \nabla u)  - \nabla \cdot F \frac{|\nabla u|^2}{2}=0.$$
Then, we obtain
\begin{equation} \label{unoh} \int_{\partial \D} (2 h e^{u/2}-2)(-2y) u_\tau = \int_{\D}  2 e^u ( \nabla K \cdot F - 4 x K ). \end{equation}
We now multiply \eqref{gg} by $4x$ and integrate to obtain
\begin{equation} \label{dos} \begin{split}   8 \int_{\D} K e^u x &= 4\int_{\D} u_x - 4\int_{\partial \D} x \nabla u \cdot \nu  = 4 \int_{\D} u_x  - 4 \int_{\partial \D} x (2 h e^{u/2} -2) \\
&= 4 \int_{\D} u_x + 4 \int_{\partial \D} y (2 h_\tau e^{u/2} + h e^{u/2} u_\tau).  \end{split} \end{equation}
Observe also that, integrating by parts,
\begin{equation} \label{tres} \int_{\partial \D} y u_\tau = - \int_{\partial \D} x u =  -\int_{\D} u_x.
\end{equation}
Putting together \eqref{unoh}, \eqref{dos} and \eqref{tres} we conclude.
\end{proof}
 
\begin{remark}  The same equality holds by interchanging the roles of $x$ and $y$, if we also change orientation $\tau \to -\tau$.
\end{remark}
 
\begin{remark} \label{obstruction} Let us observe that the Kazdan-Warner identity given in Proposition \ref{KW} can be written as
	$$ \int_{\D} e^u (1+x^2+y^2)^2\, \nabla K \cdot \nabla T  = -8 \int_{\partial\D}h_\tau T_\tau e^{u/2},$$	
where $T(x,y)= x/(1+x^2+y^2)$. As a consequence, if $\nabla K \cdot \nabla T$, $h_\tau T_\tau$ are both positive (or negative), then \eqref{gg} does not admit any solution. This is a typical Kazdan-Warner type obstruction; in the case of the semisphere, this was observed in \cite[Theorem 1]{hamza}. 
\end{remark}


\section{A blow--up analysis}\label{sec:blowup}
\setcounter{equation}{0}

In this section we begin the blow-up analysis given in Theorem \ref{main}. The main goal here is to give a proof of the Proposition \ref{mainprop} below.

Under the assumptions of Theorem \ref{main}, define:
\beq\label{xn}
u_n(\tilde{z}_n):=\max_{\overline{\D}} u_n(z)\to+\infty.
\eeq

We also define the singular set of the blowing-up sequence as in \cite{breme}:
\begin{equation}\label{singularset} 
\mathcal S:=\{p \in \oD : \ \exists \ z_n \in \oD, \ z_n \to p \, \, \, \mbox{ such that } \, \, \, u_n(z_n) \to +\infty\},
\end{equation}

By \eqref{xn}, up to a subsequence, we can assume that $\tilde{z}_n\to p \in \mathcal{S}$ as $n\to+\infty$.

The main result of this section is the following:

\begin{proposition}\label{mainprop}
	Under the assumptions of Theorem \ref{main}, $\mathcal{S}=\{p\} \subset \partial \D$ and, up to a subsequence, 
	$$
u_n\to -\infty \qquad \mbox{ locally uniform in } \oD\setminus \{p\}, 
$$ 
\beq\label{conver1}
h_ne^{u_n/2} \weakto \b \delta_p  \quad \mbox{ and } \quad K_ne^{u_n} \weakto (2\pi -\b) \delta_p,
\eeq
in the sense of measures, where
\beq\label{beta}
\displaystyle{\beta:=2\pi \frac{h(p)}{\sqrt{h^2(p)+K(p)}} }.
\eeq
Moreover,
\beq\label{diskprofile}
u_n(z)= 2 \log \left\{ \frac{2\phi_n(p)(1-|\zeta_n|^2)}{\phi^2_n(p)|1-\overline{\zeta_n}z|^2+K_n(p)|z-\zeta_n|^2} \right\} + O(1), 
\eeq
where $z\in\overline{\D}$, the function $\phi_n$ is defined in \eqref{phin} and $\zeta_n \in \D$, $\zeta_n\to p$.
\end{proposition}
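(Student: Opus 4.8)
The plan is to run a standard concentration-compactness argument for the Liouville-type system, adapted to the boundary setting. First I would show that the singular set $\mathcal{S}$ is nonempty (it contains $p$ by construction) and that $u_n \to -\infty$ locally uniformly away from $\mathcal{S}$. The mechanism is the usual one: for any point $q \notin \mathcal{S}$ there is a small ball $B_r(q)$ on which $u_n$ is uniformly bounded above; using the equation \eqref{ggn} together with the mass bound \eqref{mass} and elliptic estimates (Lemma \ref{regularity}) one gets $\|u_n^+\|$ controlled on a slightly smaller ball, hence $K_n e^{u_n}$ and $h_n e^{u_n/2}$ are bounded in $L^p$ there, and then $u_n - \bar u_n$ is bounded in $C^{0,\alpha}$. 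Since by the mass bound $e^{u_n}$ is bounded in $L^1$, on a region where $u_n$ does not blow up and does not go to $-\infty$ one gets full compactness; the standard dichotomy (using that the total mass is finite and, at a concentration point on the boundary, the ``bubble'' carries a fixed quantum of mass dictated by the classification \eqref{quantization}) forces $u_n \to -\infty$ uniformly on compact subsets of $\oD \setminus \mathcal{S}$. In particular $\int_{B_r(q)} K_n e^{u_n} \to 0$ and $\int_{\partial\D \cap B_r(q)} h_n e^{u_n/2} \to 0$ for $q \notin \mathcal{S}$, so the limiting measures are supported on $\mathcal{S}$.

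The heart of the matter is to show $\mathcal{S}$ is a single point lying on $\partial\D$, and to identify the bubble profile. For $p \in \mathcal{S}$ I would rescale: pick $a_n \to p$ with $u_n(a_n) = \max_{\overline{B_\rho(p)}} u_n =: M_n \to +\infty$, set $\mu_n := e^{-M_n/2}$, and consider $\tilde u_n(w) := u_n(a_n + \mu_n w) + 2\log\mu_n$ on the rescaled domain. Two cases arise depending on whether $\dist(a_n, \partial\D)/\mu_n$ stays bounded or diverges. If it diverges, the rescaled domain exhausts $\R^2$ and $\tilde u_n$ converges in $C^2_{loc}$ to a solution of the interior Liouville equation \eqref{limit2} with $K_0 = K(p) > 0$ (positivity of $K(p)$ is forced here since the interior bubble needs $K_0>0$), contributing mass $4\pi/K(p) \cdot K(p) = 4\pi$ by \eqref{quantization2} — but then one derives a contradiction with the boundary structure, or more precisely this case is ruled out because the global mass and the Pohozaev/Gauss-Bonnet balance \eqref{GB} cannot accommodate an interior bubble together with the prescribed boundary term; alternatively, a separate argument using that the maximum is attained arbitrarily close to $\partial\D$. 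If instead $\dist(a_n,\partial\D)/\mu_n$ is bounded, the rescaled domain converges to a half-plane $\R^2_+$ and $\tilde u_n$ converges to a solution of \eqref{limitproblem} with $K_0 = K(p)$, $h_0 = h(p)$; solvability of \eqref{limitproblem} then forces $K(p) > 0$ or ($K(p)\le 0$ and $h(p) > \sqrt{-K(p)}$), and in either case the limit carries total curvature mass exactly $2\pi$ by \eqref{quantization}, with $\beta$ as in \eqref{beta}. Since the global mass yields at most $2\pi$ total (by \eqref{GB} applied to each $u_n$, after checking that the negative parts of $K$, $h$ don't spoil the estimate — this is where the finite mass assumption \eqref{mass} and the ideas from \cite{lsmr} enter), there is exactly one bubble and $\mathcal{S} = \{p\}$, with the weak-$*$ convergences \eqref{conver1}.

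For the profile formula \eqref{diskprofile} I would transport the half-plane bubble back to the disk. The point is that the rescaling $w \mapsto a_n + \mu_n w$ is, up to lower order, the restriction of a conformal automorphism $f_{\zeta_n} \in \mathcal{G}$ of the disk (with $\zeta_n \to p$, $1-|\zeta_n| \sim \mu_n$), and the family \eqref{limitprofile} of solutions of the limit problem \eqref{limit} in the disk is exactly the image of the half-plane bubbles \eqref{entirehp} under the Cayley-type correspondence. So setting $v_n$ to be the function \eqref{limitprofile} with constants $\phi_0 \rightsquigarrow \phi_n(p)$, $K_0 \rightsquigarrow K_n(p)$ and parameter $\zeta_n$, one has $u_n - v_n$ bounded in $C^2_{loc}$ near $p$ (from the rescaled convergence) and, away from $p$, $u_n \to -\infty$ while $v_n \to -\infty$ at comparable rates, so the difference is $O(1)$ globally; a compactness argument using Lemma \ref{regularity} on $u_n - v_n$, whose right-hand sides $2(K_n e^{u_n} - K_n(p) e^{v_n})$ and $2(h_n e^{u_n/2} - h_n(p) e^{v_n/2})$ are bounded in $L^1$ and concentrate, upgrades this to the uniform $O(1)$ bound in \eqref{diskprofile}.

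The main obstacle I anticipate is the sign issue: since $K$ and $h$ may change sign, the Gauss-Bonnet identity \eqref{GB} does not immediately bound $\int K_n^+ e^{u_n}$ or $\int h_n^+ e^{u_n/2}$, so ruling out multiple bubbles (and bubbles accumulating) requires genuinely using the mass bound \eqref{mass} rather than just \eqref{GB}, together with a careful localization — this is precisely the ``possible compensation of terms'' difficulty the authors flag, and where the techniques of \cite{lsmr} are invoked. A secondary technical point is excluding the interior-bubble scenario and, more generally, showing the maximum point $a_n$ actually approaches $\partial\D$ rather than staying at fixed interior distance; this should follow from the boundary term forcing concentration on $\partial\D$, but needs to be argued.
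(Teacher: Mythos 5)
Your overall strategy coincides with the paper's: Brezis--Merle minimal mass to localize the singular set, rescaling to the half-plane problem \eqref{limitproblem}, mass quantization plus Gauss--Bonnet to count bubbles, and transport of the half-plane profile back to $\D$ by a conformal automorphism. However, two steps that carry the real weight of the proof are asserted rather than proved, and as written they do not close.

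First, the local mass. Your rescaling at the maximum point produces \emph{one} bubble carrying mass $2\pi$, but the quantity that enters the Gauss--Bonnet count is $\gamma_p+\gamma_p'=\lim_{r\to0}\lim_n\bigl(\int_{B_r^+}K_ne^{v_n}+\int_{\Gamma_r}h_ne^{v_n/2}\bigr)$, and nothing in your argument rules out a tower of bubbles at different scales accumulating at $p$, residual ``neck'' mass between scales, or negative contributions where $K$ or $h$ change sign (the local masses are signed, so ``the global mass yields at most $2\pi$'' is not a usable bound). The paper resolves this with two separate ingredients: a Li--Shafrir-type selection process adapted to the boundary (Lemma \ref{lemilla}, borrowed from \cite{lsmr}), which shows the local mass is exactly $2\pi\ell+4\pi k$ with no neck contribution, and a \emph{local} Pohozaev identity with $F=w$ on $B_r^+(0)$ (Lemma \ref{quantiz}), which yields $\gamma_p+\gamma_p'=2\pi$ and hence $\ell=1$, $k=0$. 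You flag the difficulty but supply no mechanism; the local Pohozaev identity (or at least the full selection process) is the missing idea.

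Second, the global $O(1)$ estimate \eqref{diskprofile}. The matching in the intermediate region $\delta_n\ll|z-p|\ll1$ is exactly where the work is: the statement that $u_n$ and the bubble decay ``at comparable rates'' there is what must be proved, and your fallback --- applying Lemma \ref{regularity} to $u_n-v_n$ with right-hand sides ``bounded in $L^1$'' --- fails because $L^1$ data does not give $L^\infty$ (nor $W^{1,q}$ for $q\ge2$) control for the two-dimensional Neumann problem; this is the very reason exponential nonlinearities are borderline here. The paper instead establishes the pointwise decay $\tilde v_n(w)\le(4-\eps)\log|w|+C_\eps$, feeds it into the Green representation formula to get $\tilde v_n(w)=-\frac{M_n}{\pi}\log|w|+O(1)$ on $3\le|w|\le r_0/\delta_n$, and then needs the refined quantization rate $|M_n-4\pi|=O(|\log\delta_n|^{-1})$ from a further Pohozaev identity on $B^+_{-\delta_n\log\delta_n}(0)$ so that the error $\bigl(\frac{M_n}{\pi}-4\bigr)\log|w|$ stays bounded up to $|w|\sim r_0/\delta_n$. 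Without that rate the $O(1)$ conclusion does not follow.
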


This proposition is a first step in order to conclude the asymptotics of Theorem \ref{main}. In next section we will give a more accurate description (passing from $O(1)$ of \eqref{diskprofile} to $o(1)$). This will be needed in order to conclude the necessary conditions on the point $p$ in Section 5.

Proposition \ref{mainprop} will follow from the next result, which addresses more general Liouville-type equations:

\begin{proposition} \label{prop2} Let $u_n$ be a sequence of solutions of the problems
	\begin{equation} \label{ggn2}
	\left\{\begin{array}{ll}
	\displaystyle{-\Delta u_n + 2 \hat{K}_n= 2K_ne^{u_n} } \qquad & \text{in $\D$},\\
	\displaystyle{\frac{\partial u_n}{\partial \nu} +2 \hat{h}_n = 2h_ne^{u_n/2}} \qquad  &\text{on $\partial \D $},
	\end{array}\right.
	\end{equation}
	where $K_n \to K$, $\hat{K}_n \to \hat{K}$ in $C^2(\oD)$ and $h_n \to h$, $\hat{h}_n \to \hat{h}$ in $C^2(\partial \D)$ as $n\to +\infty$. Assume moreover $\sup \{ u_n \} \to +\infty$, but it has bounded global mass:
	
	\begin{equation} \label{mass2} \int_{\D} e^{u_n} \, +  \int_{\partial \D} e^{u_n/2} \leq C.	\end{equation}

	
	Then the singular set $\mathcal{S}$ is finite. Moreover,
	
	\begin{enumerate}
		\item[i)] If $p\in \mathcal{S}\setminus \partial\D$, then $K(p)>0$;
		\item[ii)] If $p\in\mathcal{S}\cap(\partial \D)$ and $K(p) \leq 0$, then $h(p) > \sqrt{-K(p)}$;
		
		\item[iii)] $u_n\to -\infty$ locally uniformly in $\oD\setminus \mathcal{S}$;

		\item[iv)] \beq\label{weak10}
K_n e^{u_n} \weakto \sum_{p\in \mathcal{S} \setminus (\partial \D)} 4\pi \delta_{p} + \sum_{p\in\mathcal{S} \cap \partial \D } \gamma_p \delta_{p},
\eeq
and
\beq\label{weak20}
h_n e^{u_n/2} \weakto \sum_{p\in\mathcal{S} \cap \partial \D } \gamma_p' \delta_{p},
\eeq
		where $\gamma_p+\gamma_p'=2\pi$ and \beq \label{beta2}	\gamma_p:=2\pi \frac{h(p)}{\sqrt{h^2(p)+K(p)}}.	\eeq
	\end{enumerate}
\end{proposition}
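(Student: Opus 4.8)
The plan is to carry out a Brezis--Merle type concentration--compactness analysis adapted to the oblique boundary condition in \eqref{ggn2} and to the fact that $K$ and $h$ are not assumed to have a sign, using the finite-mass bound \eqref{mass2} throughout as the substitute for an $L^\infty$ control on the potentials. The first step is an \emph{$\varepsilon$-regularity} lemma: there is $\varepsilon_0>0$ such that if $\int_{B_r(p)\cap\D}|K_n|e^{u_n}+\int_{B_r(p)\cap\partial\D}|h_n|e^{u_n/2}<\varepsilon_0$ for some ball, then $u_n$ is bounded above on $B_{r/2}(p)$. To prove it I would split $u_n=w_n+z_n$ on $B_r(p)\cap\D$, where $w_n$ solves the mixed problem with interior datum $2K_ne^{u_n}-2\hat K_n$ and boundary datum $2h_ne^{u_n/2}-2\hat h_n$ on $B_r(p)\cap\partial\D$ (homogeneous on the remaining part of $\partial(B_r(p)\cap\D)$) and $z_n$ is harmonic; by \eqref{mass2} the data of $w_n$ are bounded in $L^1$ with exponential parts of small total mass, so a Moser--Trudinger/Brezis--Merle estimate (in the half-disk version obtained by reflection across $B_r(p)\cap\partial\D$) yields $e^{w_n}$ bounded in some $L^p$, $p>1$, and then $\int e^{u_n}\le C$ forces $z_n$, hence $u_n$, bounded above on $B_{r/2}(p)$. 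Combined with \eqref{mass2} this shows $\mathcal S$ is finite and that $u_n$ is locally bounded above on $\oD\setminus\mathcal S$.

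For (iii): on $\oD\setminus\mathcal S$ the sequence $u_n$ is bounded above, so by a Harnack-type inequality (again through the $L^1$-part plus harmonic-part splitting) either $u_n$ is bounded in $L^\infty_{loc}(\oD\setminus\mathcal S)$ or $u_n\to-\infty$ locally uniformly there. The first alternative is incompatible with $\mathcal S\neq\emptyset$: it would force $u_n\to u_\infty$ with $K_ne^{u_n}\weakto Ke^{u_\infty}+\sum_{p\in\mathcal S}\alpha_p\delta_p$, and the Green representation of $u_n$ then produces a logarithmic singularity of $u_\infty$ at each $p\in\mathcal S$, contradicting either $p\in\mathcal S$ or the bound $\int e^{u_\infty}\le C$ coming from Fatou and \eqref{mass2}. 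Hence (iii).

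Next I would analyse the profile at a fixed $p\in\mathcal S$. Choose $z_n\to p$ attaining the local maximum of $u_n$ near $p$ and set $\mu_n:=e^{-u_n(z_n)/2}\to0$. If $\mu_n^{-1}\dist(z_n,\partial\D)\to+\infty$, rescale $v_n(w):=u_n(z_n+\mu_nw)+2\log\mu_n$; then $v_n$ solves $-\Delta v_n=2K_n(z_n+\mu_nw)e^{v_n}$ with $\max v_n=v_n(0)=0$ and the local mass bound forces $v_n\to v$ in $C^2_{loc}(\R^2)$, $v$ a finite-mass solution of \eqref{limit2} with $K_0=K(p)$; the classification then gives $K(p)>0$ (this is (i)) and \eqref{quantization2} assigns mass $4\pi$ to $p$. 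If instead $z_n$ stays at distance $O(\mu_n)$ from $\partial\D$, compose first with a flattening of $\partial\D$ near $p$ and then rescale: $v_n$ converges to a finite-mass solution of \eqref{limitproblem} with $K_0=K(p)$, $h_0=h(p)$, so the classification forces $K(p)>0$ or $K(p)\le0$ with $h(p)>\sqrt{-K(p)}$, which is (ii), and \eqref{quantization}--\eqref{prebeta} give the boundary masses with $\gamma_p+\gamma_p'=2\pi$ and \eqref{beta2}.

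Finally, for (iv): by (iii) all the mass of $K_ne^{u_n}$ and of $h_ne^{u_n/2}$ sits on $\mathcal S$, so it remains to prove that near each $p\in\mathcal S$ a single bubble forms carrying exactly the bubble mass, i.e.\ that no residual ``neck'' mass survives. This is the delicate point and, together with the sign-changing part of the $\varepsilon$-regularity lemma, the main obstacle: without a sign on $K_n$, $h_n$ the usual maximum-principle and monotonicity tools are unavailable, and terms may compensate when passing to the limit. I would handle it by a Li--Shafrir-type second-scale argument ruling out escaping mass, controlled via the Pohozaev identity of Lemma \ref{lemapoh} applied on small half-disks $B_r^+(p)$: the boundary contribution there couples $\int_{\partial^+B_r(p)}|\nabla u_n|^2$ with the curvature fluxes and balances only when the local masses equal those of a single half-plane bubble, with \eqref{mass2} keeping the neck integrals under control in the spirit of \cite{lsmr}. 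Passing to the limit then gives \eqref{weak10}--\eqref{weak20}.
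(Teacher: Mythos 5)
Your plan is essentially the paper's own proof: a boundary Brezis--Merle minimal-mass lemma gives finiteness of $\mathcal S$, a rescaling/selection process in the spirit of \cite{li-sha} and \cite[Lemma 7.4]{lsmr} matches the local profiles with the classified whole-plane and half-plane solutions (giving i)--iii) and the local mass count $2\pi\ell+4\pi k$ near each boundary point), and a Pohozaev identity on small half-disks yields $\gamma_p+\gamma_p'=2\pi$, which forces a single boundary bubble and no residual neck mass, i.e.\ iv). The only deviations are minor --- the paper proves i) directly from the continuity of $K_n$ applied to the concentrated mass rather than by rescaling, and deduces iii) from the decay estimates of the selection process rather than from your Green-representation contradiction, which at a boundary point where $K(p)$ and $h(p)$ have opposite signs would in any case require the profile analysis first to guarantee that the logarithmic singularity of $u_\infty$ has a positive coefficient.
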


\begin{remark} Proposition \ref{prop2} holds true also for domains with boundary different from the disk, as well as for 2-manifolds with boudaries, with the obvious modifications in the proofs. However, since the goal of the paper is the study of the case of the disk, we have preferred a statement in this setting.
\end{remark}

First of all, let us introduce a minimal mass lemma, which is just a version for the case of boundaries of a well-known lemma by Brezis and Merle (\cite{breme}, see Theorem 1 and Corollary 3).

\begin{lemma}\label{minmass}
Under the assumptions of Proposition \ref{prop2}, let $p\in\D$ and $r>0$ such that $B_r(p) \subset \D$.
If
\beq\label{minmassi}
\displaystyle{ \int_{B_r(p)} K^+_n e^{u_n} \leq \varepsilon <2\pi},
\eeq
then $u_n^+$ is uniformly bounded in $L^{\infty}(B_{\frac{r}2}(p))$.

If, instead, $p \in \partial \D$ and
\beq\label{minmassb}
\int_{B_r(p) \cap \overline{\D}} K_n^+e^{u_n} \leq \varepsilon <\frac{\pi}{2}, \quad  \int_{B_r(p) \cap \partial \D } h_n^+ e^\frac{u_n}2\leq \varepsilon<\frac{\pi}{2},
\eeq
then $u_n^+$ is uniformly bounded in $L^{\infty}(B_{\frac{r}2}(p)\cap \overline{\D})$.
\end{lemma}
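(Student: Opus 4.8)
The plan is to adapt the Brezis--Merle technique \cite{breme} to the present setting, the new features being the boundary flux term and the absence of any sign assumption on $K_n$, $h_n$. I describe the boundary case; the interior one is the same argument without the reflections below, with the threshold $\pi/2$ replaced by $2\pi$. After straightening $\partial\D$ near $p$, I would first split off the small mass: let $v_n$ solve $-\Delta v_n=2K_n^+e^{u_n}$ in $B_r^+(p)$, $\partial v_n/\partial\nu=2h_n^+e^{u_n/2}$ on $\Gamma_r(p)$, $v_n=0$ on $\partial^+B_r(p)$, keeping only the positive parts of the data; by the maximum principle $v_n\geq 0$. Reflecting $v_n$ evenly across $\Gamma_r(p)$ turns this into a Dirichlet problem on the whole ball $B_r(p)$ with a nonnegative right-hand side measure of total mass $4\int_{B_r^+(p)}K_n^+e^{u_n}+4\int_{\Gamma_r(p)}h_n^+e^{u_n/2}\leq 8\varepsilon<4\pi$ --- the factors $4$ coming from the doubling of the domain and the jump of the normal derivative, which is exactly why the thresholds are $\pi/2$ (in the interior case the relevant mass is $\leq 2\varepsilon<4\pi$, hence $2\pi$). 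The Brezis--Merle inequality then provides $p>1$ with $e^{pv_n}$ bounded in $L^1(B_r^+(p))$, and its one-dimensional boundary analogue (as used e.g. in \cite{GuoLiu,lsmr}) gives $e^{pv_n/2}$ bounded in $L^1(\Gamma_r(p))$; moreover $\int_{B_r^+(p)}v_n\leq C$ since $0\leq v_n\leq p^{-1}e^{pv_n}$.

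Next I would obtain a one-sided bound on the remainder $\psi_n:=u_n-v_n$, which solves $-\Delta\psi_n=-2K_n^-e^{u_n}-2\hat K_n$ in $B_r^+(p)$ and $\partial\psi_n/\partial\nu=-2h_n^-e^{u_n/2}-2\hat h_n$ on $\Gamma_r(p)$. The crucial point is that, although these right-hand sides are only bounded in $L^1$, they are bounded \emph{from above} in $L^\infty$. Reflecting $\psi_n$ evenly across $\Gamma_r(p)$ and subtracting the potential of $C\,dx$ plus a constant multiple of the arclength measure on $\Gamma_r(p)$ --- a bounded function, since a segment has bounded logarithmic potential in the plane --- produces a subharmonic function on $B_r(p)$. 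The sub-mean-value inequality on balls contained in $B_r(p)$, together with the bound $\int_{B_r^+(p)}\psi_n^+\leq C$ (which follows from $v_n\geq 0$, $t^+\leq e^t$ and \eqref{mass2}), then gives $\psi_n\leq C$ on $B_{r/2}^+(p)$.

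Combining the two steps, $e^{u_n}=e^{v_n}e^{\psi_n}\leq Ce^{v_n}$ on $B_{r/2}^+(p)$ and $e^{u_n/2}\leq Ce^{v_n/2}$ on $\Gamma_{r/2}(p)$, so $e^{u_n}$ and $e^{u_n/2}$ are bounded, uniformly in $n$, in $L^p$ for some $p>1$. Then the right-hand sides $2K_ne^{u_n}-2\hat K_n$ and $2h_ne^{u_n/2}-2\hat h_n$ of the equation satisfied by $u_n$ are bounded in $L^p$, and standard elliptic estimates for the mixed boundary value problem (in the spirit of Lemma \ref{regularity}) together with the embedding $W^{1+1/q,q}\hookrightarrow C^0$ yield, after undoing the straightening, a uniform bound for $u_n$ in $L^\infty\big(B_{r/2}(p)\cap\oD\big)$ --- choosing the intermediate radii so that the final one is $r/2$, or equivalently running the argument from a slightly larger radius. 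The interior statement is obtained in the same way, only simpler.

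The delicate point is the second step: the negative parts $K_n^-e^{u_n}$ and $h_n^-e^{u_n/2}$ carry uniformly bounded but not small $L^1$-mass, so they cannot be absorbed into the Brezis--Merle estimate; one must instead exploit that they have a definite sign in order to control $\psi_n$ only from above, which is exactly what is needed to dominate $e^{u_n}$ by $e^{v_n}$. The other technical nuisance is to pin down the sharp constants $\pi/2$ at the boundary through the reflection argument, together with the accompanying boundary trace estimate for $v_n$.
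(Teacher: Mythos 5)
The paper itself does not prove this lemma: it is quoted as a boundary version of the Brezis--Merle minimal mass lemma and the proof is outsourced to \cite[Lemma 2.4]{bls}. Your argument reconstructs precisely that kind of proof and is correct in its structure: the splitting $u_n=v_n+\psi_n$ with only the positive parts $K_n^+$, $h_n^+$ feeding $v_n$; the even reflection converting the Neumann flux into a singular measure supported on $\G_r(p)$, so that the reflected data have total mass $4\int_{B_r^+}K_n^+e^{u_n}+4\int_{\G_r}h_n^+e^{u_n/2}\le 8\e<4\pi$; the Brezis--Merle/Jensen estimate giving $e^{pv_n}$ bounded in $L^1$ of the half-ball and its trace analogue $e^{pv_n/2}$ bounded in $L^1$ of the segment, both of which require exactly $8p\e<4\pi$ --- this is indeed where the threshold $\pi/2$ comes from, and $2\e<4\pi$ explains $2\pi$ in the interior case; and the one-sided control $\psi_n\le C$ obtained by subtracting the (bounded) logarithmic potentials of $C\,dx$ and of a constant multiple of arclength on $\G_r(p)$ and applying the sub-mean value inequality together with $\int\psi_n^+\le\int e^{u_n}\le C$, which follows from $v_n\ge0$ and \eqref{mass2}. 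This is the same scheme as in the cited reference, so your proposal is essentially the intended proof, written out in detail.

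One point in the final step should be phrased more carefully. You cannot directly invoke an elliptic estimate for the mixed boundary value problem satisfied by $u_n$ on $B_{r/2}^+(p)$, because $u_n$ is not controlled on the curved part $\de^+B_{r/2}(p)$, and no two-sided $L^\infty$ bound can hold anyway (away from blow-up points $u_n\to-\infty$); the lemma only claims a bound on $u_n^+$. The standard repair, fully consistent with what you already have, is to solve an auxiliary problem $w_n$ with the $L^p$ interior and Neumann data and zero Dirichlet condition on the curved part, so that $\|w_n\|_{L^\infty}\le C$ by a Lemma \ref{regularity}-type estimate and the embedding $W^{1+1/q,q}(\D)\subset C^0$, and then to bound from above only the remainder $z_n=u_n-w_n$, which is harmonic with vanishing Neumann data on the flat part, hence harmonic after even reflection; the mean value property together with $\int z_n^+\le\int e^{u_n}+C\le C$ gives $z_n\le C$ on a smaller half-ball. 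Equivalently one can quote a local boundedness estimate for subsolutions up to the Neumann boundary, using the $L^1$ bound on $u_n^+$. With this adjustment (and the routine bookkeeping of radii you already mention) the proof is complete.
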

A proof of the previous result can be found in \cite[Lemma~2.4]{bls}.

\medskip 

As a consequence of this lemma, the set $\mathcal{S}$ (defined in \eqref{singularset}) must be finite. Moreover:
\beq\label{weak1}
K_n e^{u_n} \weakto \sum_{p \in \mathcal{S} \cap \D} \alpha_p \delta_{p} + \sum_{p\in\mathcal{S} \cap \partial \D } \gamma_p \delta_{p} +\Psi,
\eeq
and
\beq\label{weak2}
h_n e^{u_n/2} \weakto \sum_{p \in\mathcal{S} \cap \partial \D } \gamma_p' \delta_{p} + \Psi',
\eeq
where $\alpha_p\geq 2\pi$, $\gamma_p\geq \frac{\pi}{2}$ or $\gamma_p'\geq \frac{\pi}{2}$, $\Psi \in L^1(\D)\cap L^{\infty}_{loc}(\overline{\D}\setminus\mathcal{S})  ,\Psi'\in L^1(\partial \D)\cap L^{\infty}_{loc}(\partial\D\setminus\mathcal{S})$.

\medskip Taking into account \eqref{mass2}, for any point $p \in \mathcal{S} \cap \D$ we have:
\begin{equation*}\begin{split}
2\pi &\leq \int_{B_r(p)} K_n^+ e^{u_n} \leq  \int_{B_r(p)} \left( K_n^+(p) + r \|\nabla K_n^+\|_{L^\infty(B_r(p))}  \right) e^{u_n} \\
&\leq C(K_n^+(p) + r \|\nabla K_n^+\|_{L^\infty} ).
\end{split}\end{equation*}
By choosing $r>0$ sufficiently small we conclude that $K(p)>0$, so that $i)$ of Proposition \ref{prop2} holds.

\medskip

Let us now point out that, outside the set $\mathcal{S}$ we can assume that $e^{u_n}$ is uniformly bounded. Via a Green representation formula (or via local regularity estimates for the Neumann problem) one can deduce that $u_n$ has bounded oscillation far from $\mathcal{S}$: being more specific, 
\beq\label{oscill}
|u_n(z_1)-u_n(z_2)| \leq C \qquad \forall z_1,z_2 \in \displaystyle{ \overline{\D} \setminus \bigcup_{p\in\mathcal{S}} B_r(p)},
\eeq
for some $r>0$ fixed.

\medskip

\subsection{Proof of Proposition \ref{prop2}}

The proof of Proposition \ref{prop2} is the result of a more detailed study of the behavior of $u_n$ around the singular points. We shall focus on the boundary points, since the theory for interior blow-up points is much more developed (see for instance \cite{li, li-sha}).

\medskip 

Consider hence a point $p\in\mathcal{S} \cap \partial \D$ and a neighborhood of it which does not intersect any other singular point. Via a conformal map, we can pass to a problem in a half-ball. Let us be more specific, and assume without loss of generality that $p=(1,0)$. By the M\"obius transformation 
\[
\begin{array}{cccc}
f_1:&\mathbb{R}^2_+&\longrightarrow&\D\\
 &w&\longmapsto& z=\frac{i-w}{w+i}.
\end{array}
\]
we can map a semiball $B_1^+(0)\subset \mathbb{R}^2_+$ into the right half disk, where the point $p$ corresponds to the origin. Consider the transformation
$$
v_n(w):=u_n(f_1(w))+2\log|f'_1(w)|=u_n(f_1(w))+2\log \frac{2}{|w+i|^2}.
$$
Then $v_n$ satisfies the problem
\beq\label{problemplain}
\left\{\begin{array}{ll}
-\Delta v_n + 2 \hat{K}_n=2 {K}_n e^{v_n}&\text{ in }B_{r_0}^+(0),\vspace{.3cm}\\
\frac{\partial v_n}{\partial \nu} + 2 \hat{h}_n =2 h_n e^\frac{v_n}2&\text{ on $\G_{r_0}(0):=(-r_0,r_0)\times \{0\}$},
\end{array}\right.
\eeq
for some $r_0>0$ small, where $\mathcal{S} =\{0\}$. Here the functions $\hat{K}_n$, $\hat{h}_n$, $K_n$ and $h_n$ come from the original data composed with the transformation $f_1$: for the sake of clarity we keep the same notation. Besides, by \eqref{xn}, there exists a sequence $\{w_n\} \subset B_{r_0}^+(0)$ such that 
\beq\label{wn}
v_n(w_n)=\max_{\overline{B_{r_0}^+}(0)}v_n(w)\to +\infty,\quad w_n\to 0.
\eeq
Notice that $w_n$ may not match $f_1(\tilde{z}_n)$. 

%
%
%
%

\medskip

The following selection process has been applied many times in the literature starting from \cite{li-sha} (see in particular Lemma 4). The case with boundary have been treated in \cite[Lemma 7.4]{lsmr}, to which we refer for details (see also \cite{bjly}).

\begin{lemma} \label{lemilla} There exists a finite number of sequences $\Sigma_n :=\{w_1^n,\ldots,w_\ell^n, \hat{w}_1^n, \ldots \hat{w}_k^n \}\to 0$ and positive sequences $\varepsilon_1^n,\ldots,\varepsilon_\ell^n\to 0$, $\hat{\e}_1^n, \ldots \hat{\e}_k^n \to 0$ such that
\begin{enumerate}
\item[a)] $v_n(w_i^n)=\max \{ v_n(w), \ w \in B_{\varepsilon_i^n}(w_i^n) \cap \overline{\R^2_+} \} $ for $i = 1, \ldots, \ell$,
\item[b)] $\hat{\e}_i^n \leq dist(\hat{w}_i^n,\Gamma_{r_0}(0))$ and $v_n(\hat{w}_i^n)=\max \{ v_n(w), w \in B_{\hat{\varepsilon}_i^n}(\hat{w}_i^n)\} $ for $i = 1, \ldots, k$,
\item[c)] If $K(0)\leq 0$ then $k=0$ and $h(0) > \sqrt{-K(0)}$.
\item[d)] $\int_{B_{r}^+(0)} K_n e^{v_n} + \int_{ \Gamma_r(0)} h_n e^{v_n/2} \to 2 \pi \ell + 4 \pi k.$
\item[e)] $v_n \to - \infty$ uniformly in compact sets of $B_{r}^+(0) \setminus\{0\}$.
\end{enumerate}

\end{lemma}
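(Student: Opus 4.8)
The plan is to follow the now-standard selection (or ``bubble-tree'') construction, which extracts from the blowing-up sequence $v_n$ a maximal collection of concentration points together with their concentration scales. First I would set $w_1^n := w_n$ (the maximum point from \eqref{wn}) and $\mu_1^n := e^{-v_n(w_n)/2} \to 0$; after the rescaling $\tilde v_n(w) := v_n(w_1^n + \mu_1^n w) + 2\log \mu_1^n$ one checks that $\tilde v_n$ satisfies a Liouville-type problem with right-hand sides that converge (in $C^2_{loc}$, using $K_n \to K$, $h_n\to h$ and the smallness of $\mu_1^n$) to a problem with constant coefficients $K(0)$, $h(0)$, either on all of $\R^2$ (if $\mu_1^n{}^{-1}\dist(w_1^n,\G_{r_0})\to\infty$) or on a half-plane (if that quantity stays bounded, after a harmless translation/rotation so that the boundary passes through $0$). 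By the classification results quoted in Section~\ref{sublimit}, namely \eqref{entirehp2}–\eqref{quantization2} in the interior case and \eqref{entirehp}–\eqref{prebeta} in the boundary case, the limit is one of the explicit bubbles and carries mass $4\pi$ (interior) or $2\pi$ (boundary). This is also where c) comes from: in the boundary case the half-plane problem \eqref{limitproblem} is solvable only when $K(0)>0$, or $K(0)\le 0$ and $h(0)>\sqrt{-K(0)}$, so if $K(0)\le 0$ no boundary bubble with a genuine boundary term can form, forcing $k=0$ and $h(0)>\sqrt{-K(0)}$.

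Next I would iterate. Having produced $j$ sequences with their scales, I consider
\[
 \Lambda_n(w) := \min_{1\le m \le j}\bigl(\dist(w, w_m^n) + \e_m^n\bigr)^2 e^{v_n(w)},
\]
or rather its boundary-adapted variant, and if $\sup \Lambda_n$ stays bounded the process stops; otherwise one picks a new maximum point $w_{j+1}^n$ realizing that supremum, sets $\e_{j+1}^n$ to be a suitable fraction of the distance to the previously selected points, rescales around $w_{j+1}^n$, and again obtains an entire or half-plane bubble. The construction distinguishes, at each step, whether the new point sits at bounded rescaled distance from $\G_{r_0}(0)$ (a ``boundary bubble'', contributing to the $\hat w_i^n$, $\hat\e_i^n$) or escapes to the interior (contributing to the $w_i^n$, $\e_i^n$). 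That this terminates after finitely many steps is forced by the finite-mass assumption \eqref{mass2}: each extracted bubble carries a definite amount of mass ($2\pi$ or $4\pi$) and, because the bubbles separate (their mutual rescaled distances diverge), these masses do not overlap in the limit, so at most $C/(2\pi)$ of them can occur. The exact selection statements a), b) are exactly the output of this scheme.

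To get d) I would combine the lower bounds just described with an upper ``no-neck-residual-mass'' bound: outside a large but fixed collection of shrinking balls around the selected points the sequence $v_n$ has bounded oscillation and, using a Brezis–Merle type alternative (Lemma~\ref{minmass}) on the complement, one shows $v_n\to -\infty$ there, so the mass of $K_n e^{v_n}$ and $h_n e^{v_n/2}$ escaping the union of bubble regions tends to $0$. Hence the total mass $\int_{B_r^+(0)} K_n e^{v_n} + \int_{\G_r(0)} h_n e^{v_n/2}$ converges to exactly $\sum (\text{bubble masses}) = 2\pi\ell + 4\pi k$; here the bookkeeping works because an interior bubble of the half-plane problem contributes $4\pi$ while a ``true'' boundary bubble contributes $2\pi$ total (split as $\beta$ on the boundary and $2\pi-\beta$ inside, per \eqref{quantization}). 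Finally e) is obtained from d) together with Lemma~\ref{minmass}: once all the concentrated mass is accounted for by the finitely many Dirac-like contributions, on any compact subset of $B_r^+(0)\setminus\{0\}$ the local masses of $K_n^+ e^{v_n}$ and $h_n^+ e^{v_n/2}$ eventually fall below the thresholds $\pi/2$, so $v_n^+$ is locally bounded there; then the equation plus the already-established fact that the total mass concentrates at $0$ forces $v_n \to -\infty$ uniformly on such compacta.

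The main obstacle I anticipate is controlling the sign-changing coefficients when identifying the limit bubble. Because no sign is assumed on $K$ and $h$, after rescaling one only knows that $K_n e^{v_n}$ and $h_n e^{v_n/2}$ are bounded in $L^1$, not that $e^{v_n}$ itself is; a priori cancellations between the positive and negative parts could prevent the rescaled sequence from converging to a genuine Liouville bubble (this is the phenomenon flagged in the references \cite{dmls, dmlsr}). The way around it, following \cite{lsmr}, is to exploit the finite-mass hypothesis \eqref{mass2} directly — it bounds $\int e^{v_n}$ and $\int e^{v_n/2}$, not merely the curvature-weighted integrals — so that near a concentration point, where $v_n\to+\infty$, the weights $K_n$, $h_n$ are essentially frozen at their (nonzero, by c) in the relevant case) values at $0$ and the usual Liouville analysis applies. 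Making this ``freezing'' quantitative and uniform in $n$, simultaneously for the area term and the boundary length term, is the delicate point; everything else is a careful but routine adaptation of the selection argument of \cite{li-sha} and \cite[Lemma~7.4]{lsmr}.
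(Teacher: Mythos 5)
Your proposal is correct and takes essentially the same approach as the paper, which itself only sketches this lemma by deferring to the selection process of Li--Shafrir and to \cite[Lemma 7.4]{lsmr}: you reproduce the same iterative extraction of bubbles, the same identification of half-plane versus whole-plane limit profiles via the classifications of Section \ref{sublimit}, the same mass bookkeeping ($2\pi$ per half-plane bubble, $4\pi$ per interior bubble) for d), the same Brezis--Merle/oscillation argument for e), and you correctly single out the sign-changing coefficients as the point where the full-mass hypothesis \eqref{mass2} must be used. The only slip is in your justification of c): $k=0$ when $K(0)\le 0$ comes from the non-solvability of the whole-plane problem \eqref{limit2} (which requires $K_0>0$), whereas the half-plane solvability condition is what forces $h(0)>\sqrt{-K(0)}$ --- a misattribution that does not affect the argument.
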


The points $w_i^n$ represent points of local maxima for which the rescaled problem converge to a limit problem posed in a half-plane \eqref{limitproblem}, whereas for the points $\hat{w}_i^n$ the limit problem is posed in the entire plane \eqref{limit2}. The restrictions in $c)$ come from the obstructions to the existence of solutions for those problems. Moreover, the mass quantization in $d)$ comes from \eqref{quantization2} and \eqref{quantization}.

\begin{remark} In the analysis performed in \cite[Lemma 7.4]{lsmr} only the points $w_i^n$ appear. This is because in that paper $K$ is strictly negative and hence no solutions in the entire plane exist. However, the main point in the arguments of \cite{lsmr} is to show that the terms $\int_{B_{r}^+(0)} K_n e^{v_n}$, $\int_{ \Gamma_r(0)} h_n e^{v_n/2}$ do not give any contribution apart from that coming from their corresponding limit problems. That proof depends on the decay of the limit solutions \eqref{entirehp} and \eqref{entirehp2}, and works equally well in our framework.
\end{remark}

Let us point out that the coefficients $\gamma, \gamma'$ of the expansions \eqref{weak1} and \eqref{weak2} (we omit the dependence on $p$) satisfy:
\beq\label{gam1}
\gamma:=\lim_{r\to0}\lim_{n\to+\infty} \int_{B_{r}^+(0)} K_n e^{v_n}, 
\eeq
\beq\label{gam2}
\gamma':=\lim_{r\to0}\lim_{n\to+\infty} \int_{\Gamma_{r}(0) }  h_n e^{v_n/2}.
\eeq
Next we prove the quantization of this values using a proper Pohozaev type identity and the previous asymptotic estimates.

\begin{lemma}\label{quantiz}
Let $\gamma$ and $\gamma'$ given in \eqref{gam1} and \eqref{gam2}. Then
$$
\gamma+\gamma'=2\pi.
$$
\end{lemma}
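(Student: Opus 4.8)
The plan is to apply the Pohozaev-type identity from Lemma \ref{lemapoh} (in its half-ball version, since we have passed to the problem \eqref{problemplain}) with a carefully chosen vector field, namely the radial field $F(w) = w$ centered at the blow-up point $0$, and then track the contributions as $n \to \infty$ and $r \to 0$. The quantity $\gamma + \gamma'$ is exactly the total mass $\lim_{r\to 0}\lim_{n\to\infty} \left( \int_{B_r^+(0)} K_n e^{v_n} + \int_{\Gamma_r(0)} h_n e^{v_n/2} \right)$, which by part d) of Lemma \ref{lemilla} equals $2\pi \ell + 4\pi k$. So the statement $\gamma + \gamma' = 2\pi$ is equivalent to showing $\ell = 1$ and $k = 0$, i.e. there is exactly one bubble and it sits on the boundary. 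This is the heart of the matter.

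First I would set up the Pohozaev identity on $B_r^+(0)$ with $F(w) = w$, for which $DF = \mathrm{Id}$ and $\nabla \cdot F = 2$, so the interior quadratic terms $DF(\nabla v_n, \nabla v_n) - \nabla \cdot F \, |\nabla v_n|^2/2$ cancel. The lower-order terms involving $\hat K_n$, $\hat h_n$ and the gradient of $K_n$ produce factors of $r$ (they are integrals over $B_r^+(0)$ or $\Gamma_r(0)$ against a bounded-mass measure with an extra factor $|w| \le r$), hence are $o(1)$ as $r \to 0$ uniformly in $n$ by \eqref{mass2}. On the boundary arc $\Gamma_r(0)$ one has $F \cdot \nu = 0$ (since $\nu$ is vertical there and $F$ is tangent), so the curved-boundary term $\int_{\de^+ B_r(0)}$ must be analysed: there, by part e) of Lemma \ref{lemilla}, $v_n \to -\infty$ uniformly, so $e^{v_n} \to 0$ and the only surviving piece is $-\tfrac12\int_{\de^+B_r(0)} |\nabla v_n|^2 \, F\cdot\nu$ together with the $-2(\nabla v_n \cdot F)$ part of the boundary integrand; using the Green/oscillation estimates far from $0$ (as in \eqref{oscill}) these converge, as $n\to\infty$, to the analogous quantities for the limiting "total mass" behaviour, which behaves like $m^2/(2) \cdot (\text{angular integral})$ type expressions with $m = \gamma+\gamma'$. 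Carrying this out one obtains a relation of the form $m = \frac{m^2}{4\pi} \cdot c$ (the precise constant coming from the half-plane geometry), forcing $m = 2\pi$ once one knows $m \ge 2\pi$ and rules out the larger discrete values; in fact the cleanest route is to show directly from the identity that $m(m - 2\pi) \cdot (\text{something}) $ has a fixed sign incompatible with $m = 2\pi\ell + 4\pi k$ unless $\ell=1, k=0$.

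An alternative and perhaps cleaner organisation, which I would actually prefer to write up, is: rescale around each bubble $w_i^n$ (resp. $\hat w_i^n$) and observe that the limit profile is \eqref{entirehp} (resp. \eqref{entirehp2}); then the local masses are quantised to $2\pi$ and $4\pi$ respectively by \eqref{quantization} and \eqref{quantization2}. The Pohozaev identity with $F(w)=w$, applied on the full $B_r^+(0)$ and then letting $r \to 0$, evaluates the "interaction + self-interaction" and shows that the total mass $m$ satisfies a single scalar constraint. Because the boundary term on $\de^+ B_r(0)$ scales like $m^2$ while the interior and $\Gamma_r$ terms scale like $m$, the identity reads (schematically) $4\pi m = m^2$ in the pure-boundary single-bubble normalisation, whence $m = 4\pi$ is excluded by the finer half-plane computation and $m = 2\pi$ is the only admissible value; equivalently one shows $\ell + 2k = 1$ in the integers, i.e. $\ell = 1$, $k = 0$.

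The main obstacle I anticipate is the rigorous justification of the limit of the boundary integral $\int_{\de^+ B_r(0)} \big[ (2h_n e^{v_n/2} - 2)(\nabla v_n \cdot F) - \tfrac12 |\nabla v_n|^2 (F\cdot\nu)\big]$: one needs uniform control of $\nabla v_n$ on the fixed arc $\de^+ B_r(0)$ away from the blow-up point, which follows from a Green's representation formula for \eqref{problemplain} together with the weak convergence \eqref{weak1}–\eqref{weak2} and the local boundedness $\Psi \in L^\infty_{loc}$, $\Psi' \in L^\infty_{loc}$ — but making the error $o_r(1) + o_n(1)$ bookkeeping airtight, and correctly identifying the angular constant that distinguishes a half-plane bubble (mass $2\pi$) from an interior bubble (mass $4\pi$), is the delicate part. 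Everything else — the cancellation of the quadratic terms via Cauchy–Riemann-free algebra since here $F$ is merely radial not holomorphic, and the $r$-smallness of the $\hat K_n, \hat h_n, \nabla K_n$ terms — is routine given \eqref{mass2} and Lemma \ref{lemilla}.
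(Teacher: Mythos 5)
Your strategy coincides with the paper's: apply the Pohozaev identity on $B_r^+(0)$ with $F(w)=w$, note that the interior and flat-boundary contributions produce the total mass $m=\gamma+\gamma'$ linearly while the curved-boundary term $\int_{\partial^+ B_r(0)}\frac{|\nabla v_n|^2}{2}\,r$ produces it quadratically via the Green representation, and pass to the limit in $n$ and then in $r$. The paper carries this out and arrives at $\frac{2}{\pi}m^2=4m+O(r)$, which yields $m=2\pi$ \emph{directly}; the conclusion $\ell=1$, $k=0$ is then deduced afterwards from Lemma \ref{lemilla} d), not the other way around as your framing suggests.

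The gap sits exactly where you flag the "delicate part": the constant. Your schematic relation $4\pi m=m^2$ gives $m=4\pi$, which is the interior-point constant. For a boundary point the Neumann Green function on the half-ball carries the kernel $\frac1\pi\log\frac{1}{|w-z|}$ (twice the whole-plane kernel, by reflection), so $\nabla v_n\to\frac{2}{\pi}\,m\,\frac{w}{|w|^2}+O(r)$ away from the origin, and the curved boundary $\partial^+B_r(0)$ is a half-circle of length $\pi r$; together these give $\int_{\partial^+B_r(0)}\frac{|\nabla\mathcal G|^2}{2}\,r=\frac{2m^2}{\pi}$, hence $m=2\pi$. Without pinning this down the argument does not close: a relation $m=cm^2$ with $m\neq 0$ has a unique root $m=1/c$, so there are no "larger discrete values to rule out" --- either the constant is computed correctly and you get $2\pi$, or the proof fails. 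A second point your sketch glosses over: the linear term $4m$ is assembled from two different mechanisms. The piece $4\gamma$ comes from the interior term $\int_{B_r^+(0)}2e^{v_n}\,(2K_n)$ generated by $\nabla\cdot F=2$, whereas $4\gamma'$ comes from integrating by parts $\int_{\Gamma_r(0)}2h_ne^{v_n/2}\,\nabla v_n\cdot w$ along the flat part, which produces $-4\int_{\Gamma_r(0)}h_ne^{v_n/2}$ plus $O(r)$ errors and endpoint terms that vanish by Lemma \ref{lemilla} e). It is essential that both masses end up with the same coefficient $4$; your proposal does not account for this second mechanism at all.
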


\begin{proof}
Applying a Pohozaev type identity in $B_r^+(0)$ for $0<r<r_0$ with $r_0$ fixed in \eqref{problemplain}, as Lemma~\ref{lemapoh} with $F=w$, we have that
\begin{equation*}\begin{split}
	\int_{\partial B_r^+(0)} &[2  K_n e^{v_n} ( w \cdot \nu) + (2  h_n e^{v_n/2} - 2 \hat{h}_n) (\nabla v_n \cdot w) - \frac{|\nabla v_n|^2}{2} w \cdot \nu]\,dw  \\ 
	&= \int_{B_r^+(0)} [2 e^{v_n} ( \nabla {K}_n \cdot w + 2 {K}_n  ) + 2 \hat{K}_n \nabla v_n \cdot w]\,dw. 
\end{split}\end{equation*}
If we divide $\partial B_r^+= \partial^+ B_r \cup \Gamma_r$, we immediately obtain 
\begin{equation*}\begin{split}
&	\int_{\partial B_r^+(0)} [2  K_n e^{v_n} ( w \cdot \nu) + (2  h_n e^{v_n/2} - 2 \hat{h}_n) (\nabla v_n \cdot w) - \frac{|\nabla v_n|^2}{2} w \cdot \nu]\,dw \\
	&\;=	\int_{\partial^+ B_r (0)} [2 r   K_n e^{v_n}  + (2  h_n e^{v_n/2} - 2 \hat{h}_n) (\nabla v_n \cdot w) - \frac{|\nabla v_n|^2}{2} r ]\,dw
\\
&\quad +\int_{ \Gamma_r(0)} (2  h_n e^{v_n/2} - 2\hat{h}_n) \partial_1 v_n w_1\,dw,
\end{split}\end{equation*}
where we have used the notation $w=w_1+iw_2$. Taking into account that $|\nabla  K_n|\leq C$ uniformly in $B_r^+(0)$, then
		\beq\label{quant1}
		\int_{B_r^+(0)} (\nabla  K_n \cdot w)e^{v_n}\,dw \leq r \int_{B_r^+(0)} |\nabla K_n| e^{v_n}\,dw = O(r).
		\eeq
		On the other hand, integrating by parts
\begin{equation*}\begin{split}
		\int_{\Gamma_r(0)}2  h_n e^{v_n/2} \nabla v_n \cdot w\,dw =&\, 4\Big[  h_n e^{v_n/2} w_1 \Big]_{w=(-r,0)}^{(r,0)} -4\int_{\Gamma_r(0)} \nabla  h_n \cdot w \, e^{v_n/2}\, dw\\
		& - 4\int_{\Gamma_r(0)}  h_n e^{v_n/2}\,dw.
\end{split}\end{equation*}
		Again, since $|\nabla  h_n | \leq C$, we have
		\beq\label{quant2}
		\int_{\Gamma_r(0)} \nabla  h_n \cdot w \, e^{v_n/2} \,dw= O(r).
		\eeq
		Moreover, by Lemma \ref{lemilla}, $e)$,  we deduce that
		\beq\label{quant3}
		\Big[  h_n e^{v_n/2} w_1 \Big]_{w=(-r,0)}^{(r,0)} \to 0, \quad \int_{\partial^+B_r(0)}  K_ne^{v_n}r\,dw  \to 0, \quad \mbox{ as $n\to+\infty$}.
		\eeq
		By Green representation formula,
\begin{equation*}\begin{split}
\nabla v_n(w)=& \frac{1}{\pi} \int_{B_r^+(0)} \frac{w-z}{|w-z|^2} 2 K_n(z) e^{v_n(z)} dz \\
&+ \frac{1}{\pi} \int_{\Gamma_r(0)} \frac{w-z}{|w-z|^2} 2 h_n(z) e^{v_n(z)/2} dz+O(r),
\end{split}\end{equation*}
and, using \eqref{weak1} and \eqref{weak2},
$$
\nabla v_n(w)\to \nabla \mathcal{G} = \frac 2\pi (\gamma+\gamma')\frac{w}{|w|^2}+O(r) \mbox{ outside the origin,}
$$
		$$
		\int_{\partial^+ B_r(0)} \nabla v_n \cdot w\,dw \to \int_{\partial ^+B_r(0)} \nabla \mathcal{G} \cdot w\,dw = O(r), 
		$$
		
		$$
		\int_{B_r(0)} \nabla v_n \cdot w\,dw \to \int_{B_r(0)} \nabla \mathcal{G} \cdot w\,dw = O(r), 
		$$
		$$
		\int_{\partial^+ B_r(0)} \frac{|\nabla v_n|^2}{2} r\,dw  \to \int_{\partial^+ B_r(0)} \frac{|\nabla \mathcal{G}|^2}{2} r\,dw  = 2(\gamma+\gamma')^2 \frac{1}{\pi}+O(r). 
		$$
Plugging the expressions above we arrive at 
$$
2(\gamma+\gamma')^2 \frac{1}{\pi} = 4(\gamma+\gamma') +O(r),
$$
giving the desired conclusion.
\end{proof}

Observe that the previous lemma, together with Lemma \ref{lemilla}, $d)$, implies that $\ell =1 $ and $k=0$. Moreover, by \eqref{prebeta},

$$ \gamma = 2 \pi \frac{h(0)}{h^2(0)+K(0)}.$$ 

This finishes the proof of Proposition \ref{prop2}.

\subsection{Proof of Proposition \ref{mainprop}}

We finally turn our attention back to problem \eqref{ggn}. In this specific case we first show that the singular set is formed by a unique point located at $\partial \D$.

\begin{lemma}
The blow--up set $S=\{p\}\subset \partial \D$.
\end{lemma}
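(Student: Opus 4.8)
The plan is to combine the structural results just proved---Proposition \ref{prop2} applied with $\hat K_n \equiv 0$, $\hat h_n \equiv 1$---with the Gauss--Bonnet constraint \eqref{GB} in order to rule out interior blow-up points and to force a single boundary point. First I would invoke Proposition \ref{prop2} to know that $\mathcal{S}$ is finite, that $u_n \to -\infty$ locally uniformly away from $\mathcal{S}$, and that we have the weak-$*$ convergences \eqref{weak10}--\eqref{weak20}: each interior point $q \in \mathcal{S}\cap\D$ contributes a Dirac mass of weight exactly $4\pi$ to $\lim K_n e^{u_n}$ (with $K(q)>0$ by part i) of Proposition \ref{prop2}), and each boundary point $p\in\mathcal{S}\cap\partial\D$ contributes $\gamma_p \delta_p$ to $\lim K_n e^{u_n}$ and $\gamma_p'\delta_p$ to $\lim h_n e^{u_n/2}$ with $\gamma_p + \gamma_p' = 2\pi$. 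Since $u_n \to -\infty$ uniformly on compact subsets of $\oD\setminus\mathcal{S}$, the absolutely continuous parts $\Psi,\Psi'$ of the limiting measures vanish.

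The key step is then simply to integrate the convergences against the constant test function $1$ and use \eqref{GB}, i.e.\ $\int_\D K_n e^{u_n} + \int_{\partial\D} h_n e^{u_n/2} = 2\pi$ for every $n$ (this identity holds for each $u_n$ by integrating \eqref{ggn} and applying Gauss--Bonnet). Passing to the limit yields
\[
\sum_{q\in\mathcal{S}\cap\D} 4\pi \;+\; \sum_{p\in\mathcal{S}\cap\partial\D}\big(\gamma_p+\gamma_p'\big) \;=\; 2\pi,
\]
that is, $4\pi\,\#(\mathcal{S}\cap\D) + 2\pi\,\#(\mathcal{S}\cap\partial\D) = 2\pi$. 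Since $\mathcal{S}\neq\emptyset$ (the maximum blows up, so $p=\lim\tilde z_n\in\mathcal{S}$), the only nonnegative integer solution is $\#(\mathcal{S}\cap\D)=0$ and $\#(\mathcal{S}\cap\partial\D)=1$. Hence $\mathcal{S}=\{p\}$ with $p\in\partial\D$, as claimed, and moreover $\gamma_p = 2\pi - \beta$, $\gamma_p' = \beta$ with $\beta$ as in \eqref{beta}.

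The main obstacle, and the only point requiring a little care, is justifying that one may evaluate the weak-$*$ limits on the constant function $1$ even though $1$ is a perfectly good continuous test function on the compact set $\oD$---this is immediate---combined with checking that the total masses $\int_\D K_n e^{u_n}$ and $\int_{\partial\D} h_n e^{u_n/2}$ individually converge (not merely their sum), which follows from Proposition \ref{prop2}\,iii) plus dominated convergence away from $\mathcal{S}$ together with the mass concentration near each singular point quantified in Lemma \ref{lemilla}\,d) (localized Pohozaev, giving $\ell=1,k=0$ at each boundary point). Once the individual convergences are in hand, the counting argument above closes the proof; no further estimates are needed here, as the delicate rescaling analysis has already been carried out in Proposition \ref{prop2}.
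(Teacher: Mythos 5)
Your argument is correct and is essentially identical to the paper's proof: both combine the Gauss--Bonnet identity $\int_{\D} K_n e^{u_n}+\int_{\partial\D}h_n e^{u_n/2}=2\pi$ with the quantization from Proposition \ref{prop2} to get $4\pi N+2\pi M=2\pi$, forcing $N=0$ and $M=1$. The extra care you take about testing the weak-$*$ limits against the constant function $1$ is fine but not needed beyond what Proposition \ref{prop2} already provides.
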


\begin{proof}
By \eqref{GB} we know
$$\int_{\D} K_n e^{u_n} + \int_{\partial \D} h_n e^{u_n/2}=2\pi.$$ Furthermore, Proposition \ref{prop2} implies that
$$\int_{\D} K_n e^{u_n} + \int_{\partial \D} h_n e^{u_n/2}\to 4\pi N+2\pi M,$$ 
with $N = |\mathcal{S} \cap \D| $ and $M = |\mathcal{S} \cap \partial \D|$, meaning the cardinals of those sets. Thus, the only possibility is $N=0$, $M=1$.
\end{proof}

Next, we establish global pointwise estimates of the bubbling sequence around $0$. This type of estimate was first derived by Y.Y. Li in \cite{li} by the method of moving planes. Another argument was given by Bartolucci-Chen-Lin-Tarantello in \cite{bclt} for singular problems, which is more suited to our framework (see also Section~4.2. in \cite{Tarhb} for details). Since this argument is rather standard for Liouville's type problems, we will be sketchy.

\begin{lemma}\label{localprof}
	If $K(p) \leq 0$, then $h(p) > \sqrt{-K(p)}$. Moreover, the solutions $v_n$ of \eqref{problemplain} satisfy:
	
	\beq\label{loc}
	v_n(w_1,w_2)= 2 \log \left\{ \frac{2 \lambda_n}{ K_n(0)\lambda^2_n + (w_1-w_{1,n})^2+(w_2-w_{2,n}+ h_n(0)\lambda_n)^2} \right\} + O(1),
	\eeq
	in $B^+_{r_0}(0)$ where $w_n=(w_{1,n},w_{2,n})\to 0$ defined in \eqref{wn} and:
\beq\label{lambdan}
	\lambda_n:=\frac{2\delta_n}{{K}_n(0)+{h}^2_n(0)} \quad \mbox{ with } \quad {\delta}_n:=e^{-\frac{v_n(w_n)}{2}}.
\eeq

\end{lemma}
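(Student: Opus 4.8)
The plan is to combine the local blow-up selection from Lemma~\ref{lemilla} (which, since $\mathcal S=\{0\}$ consists of a single boundary point, gives $\ell=1$, $k=0$, so there is exactly one sequence $w_n\to 0$ of local maxima with scale $\delta_n=e^{-v_n(w_n)/2}$) with the classification of entire half-plane solutions \eqref{entirehp} and the quantitative rigidity that comes from Pohozaev-type identities. First I would rescale: set
\[
\tilde v_n(w):=v_n(w_n+\delta_n w)+2\log\delta_n,
\]
defined on the rescaled half-ball, so that $\tilde v_n(0)=0=\max\tilde v_n$ and $\tilde v_n$ solves
\[
-\Delta \tilde v_n+2\delta_n^2\hat K_n=2K_n(w_n+\delta_n w)e^{\tilde v_n}
\]
with boundary condition $\partial_\nu \tilde v_n+2\delta_n\hat h_n=2h_n(w_n+\delta_n w)e^{\tilde v_n/2}$. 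The term $2\delta_n^2\hat K_n\to 0$ and $2\delta_n\hat h_n\to 0$, and $K_n(w_n+\delta_n\cdot)\to K(0)$, $h_n(w_n+\delta_n\cdot)\to h(0)$ locally uniformly. By standard elliptic estimates (using the uniform $L^1$ mass bound so that $e^{\tilde v_n}$ is locally bounded in $L^1$, together with the Brezis--Merle type alternative and the fact that $0$ is a maximum so no loss of mass occurs at $0$ in the rescaled picture) we get $\tilde v_n\to \tilde v$ in $C^2_{loc}$ of the closed half-plane, where $\tilde v$ solves \eqref{limitproblem} with constants $K(0)$, $h(0)$ and finite mass. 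By the classification \eqref{entirehp} and the solvability condition recalled there, either $K(0)>0$, or $K(0)\le 0$ and $h(0)>\sqrt{-K(0)}$; this already proves the first assertion of the lemma. Moreover the limit must be the specific profile centered at the origin with $\lambda=1$:
\[
\tilde v(w_1,w_2)=2\log\left\{\frac{2}{K(0)+w_1^2+(w_2+h(0))^2}\right\},
\]
because the maximum of $\tilde v$ is attained at $w=0$; this forces the translation parameter $w_0=0$ and fixes $\lambda$ so that $K(0)\lambda^2+\lambda^2h(0)^2=2\lambda$, i.e. $\lambda=2/(K(0)+h(0)^2)$ after undoing the normalization — which is exactly the definition \eqref{lambdan} of $\lambda_n$ once we track that $\delta_n=\lambda(K(0)+h(0)^2)/2$ in the limit. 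Rewriting the convergence $\tilde v_n\to\tilde v$ back in the original variables $w=w_n+\delta_n(\cdot)$ gives precisely \eqref{loc} with an $O(1)$ error, \emph{on compact subsets} of the half-plane, i.e. on balls $B^+_{R\delta_n}(w_n)$ for any fixed $R$.

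The main obstacle is upgrading this local $O(1)$ control near the bubble to the uniform $O(1)$ estimate on the \emph{whole} fixed half-ball $B^+_{r_0}(0)$, i.e. controlling the region $R\delta_n\le |w-w_n|\le r_0$ (the ``neck'' and the region away from the bubble). This is where the moving-plane method of Y.Y.~Li \cite{li}, or the alternative of Bartolucci--Chen--Lin--Tarantello \cite{bclt} adapted to the boundary setting, enters. The strategy I would follow is the one indicated by the authors: use the known total mass quantization $\gamma+\gamma'=2\pi$ (Lemma~\ref{quantiz}), so that $v_n$ behaves like a single standard bubble carrying all the mass $2\pi$; then a Green's representation / Harnack-chain argument along dyadic annuli $\{2^{j}\delta_n\le|w-w_n|\le 2^{j+1}\delta_n\}$ shows that the oscillation of $v_n+2\log|w-w_n|^2$ (suitably normalized) over each annulus is uniformly bounded and summable, because the residual mass outside $B^+_{R\delta_n}(w_n)$ tends to $0$ as $R\to\infty$ uniformly in $n$ — this last fact being exactly the content of the ``no mass in the neck'' remark after Lemma~\ref{lemilla}, which relies on the decay $e^{v}\sim |w|^{-4}$ of the limit profile \eqref{entirehp}. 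Combining the interior and neck estimates with the bounded-oscillation estimate \eqref{oscill} far from the singular point yields \eqref{loc} on all of $B^+_{r_0}(0)$.

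Since, as the authors note, this moving-plane / Green-representation argument is by now standard for Liouville-type equations (with \cite{li, bclt, Tarhb} as references, and its boundary adaptation essentially as in \cite{lsmr}), I would only sketch it: state the Harnack-type inequality on annuli, note the summability of oscillations coming from the quantized mass and the decay of the bubble, and conclude. The one point requiring a little care in the boundary setting is that the reflection needed for the moving-plane argument is compatible with the Neumann-type boundary condition $\partial_\nu v_n+2\hat h_n=2h_n e^{v_n/2}$ after the $O(1)$ perturbation $\hat h_n\to\hat h$, $h_n\to h$ is absorbed; this is handled exactly as in \cite{lsmr, bls}, and the flatness of $\Gamma_{r_0}(0)$ (a straight segment, thanks to the Möbius change of variables $f_1$) makes the reflection clean.
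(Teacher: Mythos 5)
Your first half (rescaling by $\delta_n=e^{-v_n(w_n)/2}$, passage to the half-plane limit problem, classification of the limit profile, and the resulting solvability restriction $h(p)>\sqrt{-K(p)}$ when $K(p)\le 0$, together with the identification of $\lambda_n$) is correct and is exactly the paper's argument, including the use of $\ell=1$, $k=0$ coming from Lemma \ref{lemilla} d) and Lemma \ref{quantiz}.

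The weak point is the passage from the local convergence on $B^+_{R\delta_n}(w_n)$ to the uniform $O(1)$ estimate on all of $B^+_{r_0}(0)$. Your sketch rests on (a) the mass quantization $\gamma+\gamma'=2\pi$ of Lemma \ref{quantiz} and (b) the statement that the residual mass outside $B^+_{R\delta_n}(w_n)$ tends to $0$ as $R\to\infty$ uniformly in $n$. Neither of these carries a \emph{rate}, and a rate is indispensable here: writing $M_n$ for the total mass in $B_{r_0}^+(0)$, the Green representation gives $\tilde v_n(w)=-\frac{M_n}{\pi}\log|w|+O(1)$ on $3\le|w|\le r_0/\delta_n$, while the bubble you want to compare with behaves like $-4\log|w|+O(1)$ on the same range; the discrepancy is $\bigl(\frac{M_n}{\pi}-4\bigr)\log|w|$, which at the outer scale $|w|\sim r_0/\delta_n$ is of order $|M_n-4\pi|\cdot|\log\delta_n|$. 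Knowing only $M_n\to 4\pi$ (even with the power decay $e^{\tilde v_n}\lesssim|w|^{-4+\varepsilon}$ of the tails) does not make this $O(1)$. The paper supplies the missing quantitative input as a separate step: a local Pohozaev identity on the intermediate ball $B^+_{-\delta_n\log\delta_n}(0)$ yields $|M_n-4\pi|=O(|\log\delta_n|^{-1})$ (display \eqref{quatro}, following Lemma 4.2.5 of \cite{Tarhb}), and only then do the Green-representation estimate \eqref{tre} and the decay \eqref{deca} combine to give the global $O(1)$ bound. Your opening sentence gestures at ``quantitative rigidity from Pohozaev-type identities,'' but the only Pohozaev input you actually invoke is the limiting identity $\gamma+\gamma'=2\pi$, which is not enough; you should either insert the intermediate-scale Pohozaev step explicitly, or genuinely run the moving-plane argument of \cite{li} (which bypasses the mass rate but is a different, and not shorter, proof). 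As written, the ``summability of oscillations over dyadic annuli'' claim hides exactly the step that is not routine.
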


\begin{proof}
	Define the rescaled function
	$$
	\tilde{v}_n(w):=v_n(w_n+ \delta_n w)+2\log  \delta_n,
	$$
	that satisfies the problem
	\beq\label{problempresc}
	\left\{\begin{array}{ll}
		-\Delta \tilde{v}_n=2 {K}_n(w_n+\delta_n w)e^{\tilde{v}_n}&\text{ in } B_{{r_0}/\delta_n}^+\left(-\frac{w_n}{\delta_n}\right),\vspace{0.2cm} \\ 
		\frac{\partial \tilde{v}_n}{\partial \nu} =2 {h}_n(w_n+\delta_n w)e^\frac{\tilde{v}_n}2&\text{on } \G_{{r_0}/\delta_n}\left(-\frac{w_n}{\delta_n}\right),
	\end{array}\right.
	\eeq
	where
	
	$$ B_{{r_0}/\delta_n}^+\left(-\frac{w_n}{\delta_n}\right):=\left\{(w_1,w_2)\in B_{{r_0}/\delta_n}\left(-\frac{w_n}{\delta_n}\right): w_2\geq -\frac{w_n}{\delta_n}\right\}, $$
	$$ \G_{{r_0}/\delta_n}\left(-\frac{w_n}{\delta_n}\right):=(-{r_0}/\delta_n,{r_0}/\delta_n)\times \left\{-\frac{w_n}{\delta_n}\right\}.$$
	By simplicity we assume $-\frac{w_n}{\delta_n}\to q=0$. Thus, by the Harnack inequality (see for instance \cite{jwz}):
	\beq\label{uno}
	\tilde{v}_n \, \mbox{ is uniformly bounded in } L^\infty_{loc}(\mathbb{R}_+^2),
	\eeq
	and
	\beq\label{due}
	\tilde{v}_n \to v_0 \qquad \mbox{ in } C^2_{loc}(\mathbb{R}_+^2),
	\eeq
	where $v_0$ is the entire solution of the problem
	\beq\label{problempresc2}
	\left\{\begin{array}{ll}
		-\Delta v_0=2  K(0)e^{v_0}&\text{ in }\mathbb{R}^2_+,\vspace{.3cm}\\
		\frac{\partial v_0}{\partial \nu} =2  h(0)e^\frac{v_0}2&\text{on $\partial \mathbb{R}^2_+$,} \vspace{.3cm} \\
		v_0(0)=0, &
	\end{array}\right.
	\eeq
	with
	$$\int_{\mathbb{R}^2_+} e^{v_0} + \int_{\partial \mathbb{R}^2_+} e^{v_0/2} <C.$$
	According to Subsection \ref{sublimit}, if $K(0) \leq 0$, then $h(0) > \sqrt{-K(0)}$. Moreover, $v_0$ takes the form:
	\beq\label{locR2+}
	v_0(w_1,w_2)= 2 \log \left\{ \frac{2 \lambda_0}{  K(0) \lambda_0^2 + w_1^2+(w_2+\lambda_0  h(0))^2} \right\},
	\eeq
	where $\lambda_0:=\frac{2}{ K(0)+ h(0)^2}>0$. In addition, we have that
	\beq\label{profile2bis}
	|w_n| \leq C \delta_n.
	\eeq
	
	\medskip

	We are now concerned with the global $O(1)$ estimate. By the analysis in \cite[Lemma 7.4]{lsmr} (see in particular Step 4), we conclude that for every $\varepsilon$ there exists $R_\varepsilon>1$ such that
	\beq\label{deca}
	\tilde v_n(w)\leq (4-\varepsilon)\log |w|+C_\varepsilon, \qquad \mbox{ for $|w|\geq 2R_\varepsilon$}.
	\eeq
		
Taking into account  \eqref{oscill}	and \eqref{due}, by Green representation formula one has that 
\begin{equation*}\begin{split}
\tilde{v}_n(w)+\frac{M_n}{\pi} \log |w|&= \frac{1}{\pi} \int_{B^+_{r_0/\delta_n}\left(-\frac{w_n}{\delta_n}\right)} \log \left( \frac{|w||z|}{|w-z|}\right) 2 K_n(w_n+\delta_n z) e^{\tilde v_n(z)}\,   dz  \\
&\;+  \frac{1}{\pi} \int_{\Gamma_{r_0/\delta_n}\left(-\frac{w_n}{\delta_n}\right)} \log \left( \frac{|w||z|}{|w-z|}\right) 2 h_n(w_n+\delta_n z) e^{ \frac{\tilde v_n(z)}{2}}\,   dz  + \Psi_n,
\end{split}\end{equation*}
where $\Psi_n$ is uniformly bounded in $\overline{B^+_{r_0/\delta_n}}\left(-\frac{w_n}{\delta_n}\right)$ and $M_n$ is defined as
\begin{equation*}\begin{split}
	M_n&:=\int_{B^+_{r_0}(0)}  2 K_ne^{v_n}+ \int_{\Gamma_{r_0}(0)}  2 h_n e^{\frac{v_n}{2}}\\
	&= \int_{B^+_{r_0/\delta_n}\left(-\frac{w_n}{\delta_n}\right)}  2{K}_n(w_n+\delta_n w)e^{\tilde{v}_n}+ \int_{\Gamma_{r_0/\delta_n}\left(-\frac{w_n}{\delta_n}\right)}  2{h}_n(w_n+\delta_n w)e^\frac{\tilde{v}_n}2.
\end{split}\end{equation*}
	We can estimate the expression above using the asymptotic behavior given by \eqref{deca} in order to conclude that
	\beq\label{tre}
	\left|\tilde{v}_n(w)+\frac{M_n}{\pi} \log |w| \right| = O(1) \quad \mbox{ for } 3\leq |w| \leq \frac{r_0}{ \delta_n}.
	\eeq
	See  Lemma~4.2.4. in \cite{Tarhb} for further details on the computations of the logarithmic terms.

Finally, using a Pohozaev type identity in $B^+_{-\delta_n\log\delta_n}(0)$ one can obtain that
	\beq\label{quatro}
	\left|M_n - 4\pi \right|=O\left( |\log\delta_n |^{-1}\right).
	\eeq
	We refer the reader to Lemma~4.2.5. in \cite{Tarhb} for more details.

\
	
	Combining \eqref{uno}, \eqref{due}, \eqref{tre} and \eqref{quatro}, we conclude
	$$
	\left| \tilde v_n(w_1,w_2) - 2 \log \left\{ \frac{2 \lambda_0}{ K(0)\lambda_0^2 + w_1^2+(w_2+ h(0)\lambda_0)^2} \right\} \right| \leq C, \quad \mbox{ in } \overline{B^+_{r_0/\delta_n}}(0).
	$$
	Using the definition of $\tilde v_n$ and \eqref{wn}, scaling back to $v_n$ we obtain \eqref{loc}.	 
\end{proof}

\medskip We now conclude the proof of Proposition \ref{mainprop} by showing the estimate \eqref{diskprofile}.
We transform properly the solution of \eqref{problemplain} into a solution in the set 
$$\mathcal{D}:= \left\{z=x+iy \in \overline{\D} : x>\frac{1-r_0}{1+r_0} \right\}.$$
In order to do that, note that
\begin{equation*}\begin{split}
&u_n(z)=v_n(f_1^{-1}(z))+2\log\frac{2}{|z+1|^2}\\
\end{split}\end{equation*}
where $f_1^{-1}$ is a M\"obius transformation, inverse of $f_1$, namely
\[
\begin{array}{cccc}
f_1^{-1}:&\overline{\D}&\longrightarrow&\mathbb{R}_+^2 \vspace{0.2cm}\\
 &z&\longmapsto& w=i\frac{1-z}{z+1},
\end{array}
\]
that is,
$$u_n(z)=2\log\left\{\frac{4\lambda_n}{\left[{K}_n(0)\lambda_n^2+\left(\frac{2y}{|z+1|^2}-w_{1,n}\right)^2+\left(\frac{1-|z|^2}{|z+1|^2}-w_{2,n}+{h}_n(0)\lambda_n\right)^2\right]|z+1|^2}\right\}.$$
To complete the proof, define a sequence $\zeta_n$ as
$$
\zeta_n:=\left(1-\lambda_n \frac{\phi_n^2(p)+K_n(p)}{\phi_n(p)}\right) p .
$$
Taking into account \eqref{lambdan}, observe that $\zeta_n\to p$ as $n\to+\infty$ and $|\zeta_n|<1$. Moreover, \eqref{profile2bis} holds by the choice of $\zeta_n$. Considering the function
\beq\label{profilenhdisk}
\tilde{u}_n(z):= 2 \log \left\{ \frac{2\phi_n(p)(1-|\zeta_n|^2)}{\phi^2_n(p)|1-\overline{\zeta_n}z|^2+K_n(p)|z-\zeta_n|^2} \right\},
\eeq
where $z\in \mathcal{D}$, it can be proved that
$$
|\tilde{u}_n(z)-u_n(z)|=O(1),
$$
that is, 
$$u_n(z)=\tilde{u}_n(z)+O(1), \quad z\in \mathcal{D}.$$
We finally extend the previous expression to the whole disk. Letting $z\in \D$ and $z_1 \in \mathcal{D}$, by \eqref{oscill} and \eqref{profilenhdisk} we obtain that
\begin{equation*}\begin{split}
&\left| u_n(z) - 2 \log \left\{ \frac{2\phi_n(p)(1-|\zeta_n|^2)}{\phi^2_n(p)|1-\overline{\zeta_n}z|^2+K_n(p)|z-\zeta_n|^2} \right\} \right| \\
&\leq\left| u_n(z_1) - 2 \log \left\{ \frac{2\phi_n(p)(1-|\zeta_n|^2)}{\phi^2_n(p)|1-\overline{\zeta_n}z|^2+K_n(p)|z-\zeta_n|^2} \right\} \right| + \left| u_n(z)- u_n(z_1) \right |\\
&\leq C,
\end{split}\end{equation*}
that gives the desired conclusion.

\section{Estimate of the error term}\label{error}
\setcounter{equation}{0}

%
%
%
%
%
%
%
%
%
%
%

\begin{proposition} \label{quantitative} Under the assumptions of Theorem \ref{main}, there exist $a_n \in  \D$ such that, defining
$$v_n(z):= u_n(f_{a_n}(z)) + 2 \log \frac{1-|a_n|^2}{|1 + \overline{a_n} z |^2},$$
with $f_a(z):= \frac{a+z}{1 + \overline{a} z}$, there holds:
\begin{enumerate}
	\item[a)] $v_n$ solves the problem
	\begin{equation} \label{eq-v}
	\left\{\begin{array}{ll}
	\displaystyle{-\Delta v_n = 2K_n(f_{a_n}(z))e^{v_n(z)} } \qquad & \text{in $\D$},\\
	\displaystyle{\frac{\partial v_n}{\partial \nu} +2 = 2h_n(f_{a_n}(z))e^{v_n(z)/2}} \qquad  &\text{on $\partial \D $},
	\end{array}\right.
	\end{equation}
	
	\item[b)] $\int_{\D} x e^{v_n(z)} \, dz = \int_{\D} y e^{v_n(z)} \, dz =0$,
	\item[c)] $a_n \to p$ where $p \in \partial \D$ is the blow-up point,
	\item[d)] $v_n$ is uniformly bounded,
	\item[e)] $v_n(z) = 2 \log \left( \frac{2 \hat{\phi}_n}{\hat{\phi}_n^2 + \hat{k}_n |z|^2} \right) + \xi_n(z)$,
	where $$ \hat{\phi}_n:= \phi_n \left(\frac{a_n}{|a_n|} \right), \hat{k}_n:=K_n \left(\frac{a_n}{|a_n|} \right),$$
	 with $\phi_n$ defined in \eqref{phin} and $\| \xi_n \|_{C^{0,\alpha}(\overline{\D})} \leq C (1-|a_n|)^{1-\alpha}$ for any  $\alpha \in (0,1/2)$. 
\end{enumerate}
\end{proposition}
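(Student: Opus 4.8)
The plan rests on the conformal invariance of \eqref{gg} and the quantitative nondegeneracy of Lemma~\ref{lema linear 2}. Part a) is immediate: \eqref{ggn} is invariant under $\mathcal{G}$, the affine boundary term $+2$ being preserved by the $2\log|f_{a_n}'|$ correction exactly as in the proof of Proposition~\ref{KW}, so $v_n=(u_n)_{f_{a_n}}$ solves \eqref{eq-v}. For b)--c) I would choose $a_n$ as a conformal barycenter. Changing variables $w=f_{a_n}(z)$ and using $f_{a_n}^{-1}=f_{-a_n}$ gives $\int_\D z\,e^{v_n(z)}\,dz=\int_\D f_{-a_n}(w)\,e^{u_n(w)}\,dw$, so b) asks for the Euclidean barycenter of the pushforward of $e^{u_n}\,dz$ under $f_{-a_n}$ to vanish. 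Since $\int_\D e^{u_n}$ is bounded away from $0$ by the quantization in Proposition~\ref{mainprop}, the normalized measure $\mu_n:=e^{u_n}\,dz/\int_\D e^{u_n}$ is an atomless probability measure with $\mu_n\rightharpoonup\delta_p$, and $a_n:=B(\mu_n)$ (its Douady--Earle conformal barycenter) exists, is unique, and satisfies b) by construction. Then c) follows by contradiction: if $a_n\to a_\infty\neq p$ along a subsequence, $f_{-a_n}$ converges uniformly near $p$ to the Möbius automorphism $f_{-a_\infty}$, which sends $p$ to a point of $\partial\D$, so the barycenter $\int f_{-a_n}\,d\mu_n$ would tend to $f_{-a_\infty}(p)\neq 0$ — impossible.

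For d) the idea is to transplant the profile \eqref{diskprofile}. Pulling it back by $f_{a_n}$ and using that $\mathcal{G}$ is a group leaving radial profiles invariant up to rotation, one obtains $v_n=v^{(n)}_{-b_n}+O(1)$, where $v^{(n)}_{-b_n}$ is the profile \eqref{limitprofile} with constants $K_n(p),\phi_n(p)$ and parameter $-b_n\in\D$, $b_n$ arising from composing the corresponding Möbius maps. If $|b_n|\to 1$ along a subsequence, then $e^{v^{(n)}_{-b_n}}$, and hence $e^{v_n}$ (which lies within a bounded factor of it), concentrates all its mass at a point of $\partial\D$, so $\int_\D z\,e^{v_n}/\int_\D e^{v_n}$ would tend to that boundary point, contradicting b). Hence $\limsup|b_n|<1$, the profile $v^{(n)}_{-b_n}$ is uniformly bounded on $\oD$, and so is $v_n$. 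By Lemma~\ref{regularity}, $v_n$ is then bounded in $W^{1+1/q,q}(\D)$ for every $q$, hence in $C^{0,\gamma}(\oD)$ for every $\gamma<1$; and since $f_{a_n}(z)\to p$ for a.e.\ $z$, we have $K_n\circ f_{a_n}\to K(p)$ in every $L^q(\D)$ and $h_n\circ f_{a_n}\to h(p)$ in every $L^q(\partial\D)$, so $v_n\to v_*$ in $C^{0,\gamma}(\oD)$, where $v_*$ is the radial profile \eqref{radialprofile} with $K_0=K(p)$, $h_0=h(p)$ (by uniqueness, Remark~\ref{rem rad}). Writing $w_n(z):=2\log\{2\hat\phi_n/(\hat\phi_n^2+\hat k_n|z|^2)\}$ for the explicit profile in e), and $\hat h_n:=h_n(a_n/|a_n|)$, one also has $w_n\to v_*$, since $\hat\phi_n\to\Phi(p)>0$ (by part i) of Theorem~\ref{main}) and $\hat k_n\to K(p)$; in particular $\|v_n-w_n\|_{L^\infty(\D)}\to 0$.

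For e): $w_n$ solves \eqref{eq-v} with the \emph{constants} $\hat k_n,\hat h_n$ in place of $K_n\circ f_{a_n},h_n\circ f_{a_n}$, so $\xi_n:=v_n-w_n$ solves a nonhomogeneous linearized problem of the type \eqref{linearbis} (linearizing around $v_*$ and absorbing $\hat k_n e^{w_n}-K(p)e^{v_*}$, etc., into the data), with
\[
c_n = 2\big(K_n\circ f_{a_n}-\hat k_n\big)e^{v_n}+O(\xi_n^2)+o(1)\xi_n,\qquad d_n = 2\big(h_n\circ f_{a_n}-\hat h_n\big)e^{v_n/2}+O(\xi_n^2)+o(1)\xi_n.
\]
Since $w_n$ is radial, $\int_\D z\,e^{w_n}\,dz=0$, so b) yields $\int_\D z\,e^{w_n}\xi_n\,dz=O(\|\xi_n\|_\infty^2)$; thus $\xi_n$ satisfies \eqref{baricentro} up to $o(1)\|\xi_n\|_\infty$, which I would correct by subtracting a small multiple of the kernel $\psi_1,\psi_2$ of Lemma~\ref{lema linear 1}. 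Now fix $\alpha\in(0,1/2)$ and set $q:=1/(1-\alpha)\in(1,2)$. Applying Lemma~\ref{lema linear 2} with this $q$, using the embedding $W^{1+1/q,q}(\D)\hookrightarrow C^{0,\alpha}(\oD)$ (the borderline exponent being exactly $1-1/q=\alpha$, admissible since $q<2$), and absorbing the $O(\xi_n^2)$ and $o(1)\xi_n$ terms via $\|\xi_n\|_\infty\to 0$, one gets
\[
\|\xi_n\|_{C^{0,\alpha}(\oD)}\le C_\alpha\Big(\big\|(K_n\circ f_{a_n}-\hat k_n)e^{v_n}\big\|_{L^q(\D)}+\big\|(h_n\circ f_{a_n}-\hat h_n)e^{v_n/2}\big\|_{L^q(\partial\D)}\Big).
\]
Finally, the identity $f_{a_n}(z)-\tfrac{a_n}{|a_n|}=\dfrac{(1-|a_n|)\,(z-a_n/|a_n|)}{1+\overline{a_n}z}$ gives $\big|K_n(f_{a_n}(z))-\hat k_n\big|\le C\,\dfrac{1-|a_n|}{|1+\overline{a_n}z|}$, and likewise for $h_n$; since $q<2$, $\int_\D|1+\overline{a_n}z|^{-q}\,dz=O(1)$ and $\int_{\partial\D}|1+\overline{a_n}z|^{-q}\,ds=O\big((1-|a_n|)^{1-q}\big)$, so, using $\|e^{v_n}\|_\infty\le C$, the two norms above are $O(1-|a_n|)$ and $O\big((1-|a_n|)^{1/q}\big)=O\big((1-|a_n|)^{1-\alpha}\big)$, which proves e).

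The main obstacle is the last step of e): one has to balance the Sobolev exponent against the integrability of $|1+\overline{a_n}z|^{-q}$, and it is the boundary error — of size $O(1)$ on an arc of length $\sim(1-|a_n|)$ near $-a_n/|a_n|$ — that dictates the rate and, together with the requirement $q<2$ needed to keep the Hölder embedding sharp, forces the restriction $\alpha<1/2$. A second, more structural difficulty appears already in d): since no sign is imposed on $K,h$, the usual Brezis--Merle/Gauss--Bonnet bookkeeping does not by itself exclude a re-concentration of $v_n$ at the boundary, which is why one must argue through the explicit bubble of Proposition~\ref{mainprop} and the normalization b).
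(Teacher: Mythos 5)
Your proposal is correct and follows essentially the same route as the paper: part b) is obtained there by a Brouwer-degree argument for the map $a\mapsto\int_\D z\,e^{v_a}$ (which is exactly the construction underlying the conformal barycenter you invoke), d) by transplanting the bubble of Proposition \ref{mainprop} and excluding $|\tilde a_n|\to1$ via the zero-barycenter normalization, and e) by linearizing around the radial profile and applying Lemma \ref{lema linear 2} with $q=1/(1-\alpha)$ together with the integral estimates of Proposition \ref{appendix}. Your explicit handling of the approximate orthogonality \eqref{baricentro} (correcting by the kernel elements $\psi_1,\psi_2$) addresses a point the paper passes over silently and is a refinement rather than a deviation.
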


\begin{proof} Assertion a) is immediate. In order to prove b), given any continuous function $u$ we define $ \Gamma : \D \to \R^2$ as
$$\Gamma(a) := \int_{\D} z e^{v_a(z)} \, dz,$$
where
$$v_a(z):= u(f_{a}(z)) + 2 \log |f_a'(z)|.$$
We claim that there exists $a \in \D$ such that $\Gamma(a)=0$.
Observe that if $a_n \to a \in \partial \D$,
\begin{equation*}\begin{split}
\int_{\D} z e^{v_{a_n}(z)} \, dz &= \int_{\D} z e^{u(f_{a_n}(z))} |f_{a_n}'(z)|^2 \, dz  = \int_{\D} f_{-{a_n}}(z') e^{u(z')} \, dz' \\
&\to -a \int_{\D} e^{u(z')} dz',
\end{split}\end{equation*}
by Lebesgue Theorem. Then, the map $\Gamma$ can be extended in a continuous way to the boundary by $\Gamma(a)= -a \int_{\D} e^{u(z')} \, dz'$. As a consequence its Brouwer degree is $1$ and we conclude the claim.

We now prove $c)$. Assume by contradiction that, up to a subsequence, $a_n \to a_0 \in \overline{\D}$, $a_0 \neq p$. Observe that $f_{a_n}$ converges to $f_{a_0}$ uniformly in $B_p(r)$, for small $r>0$. Recall that by Proposition \ref{mainprop}, $K_n e^{u_n} \weakto \beta \delta_p$, with $p \in \partial \D$. Then
$$0 = \int_{\D} z e^{v_n(z)} \, dz =  \int_{\D} z e^{u_n(f_{a_n}(z))} |f_{a_n}'(z)|^2 \, dz  = \int_{\D} f_{-a_n}(z) e^{u_n(z)} \, dz \to c_0\frac{-a_0+p}{1-\overline{a_0}p},$$
with $c_0\neq 0$ a constant independent of $a$, a contradiction.

Hence, let $a_n\to p$. Taking $u_n$ as in Proposition~\ref{mainprop}, define 
$$
v_n(z):=u_n(f_{a_n}(z)) +2\log \frac{1-|a_n|^2}{|1+\overline{a_n}z|^2}, 
$$
which implies
$$
v_n(z)= 2 \log \left\{ \frac{2\phi_n(p)(1-|\zeta_n|^2)(1-|a_n|^2)}{\phi^2_n(p)|1-\overline{\zeta_n}a_n+z(\overline{a_n}-\overline{\zeta_n})|^2+K_n(p)|a_n-\zeta_n+z(1-\overline{a_n}\zeta_n)|^2} \right\}+ O(1).
$$
Letting $\tilde{a}_n=\frac{a_n-\zeta_n}{1-\overline{a_n}\zeta_n} \in \D $ we can rewrite the previous expression as
$$
v_n(z)= 2 \log \left\{ \frac{2\phi_n(p)(1-|\tilde{a}_n|^2)}{\phi^2_n(p)|1- \overline{\tilde{a}_n} z|^2+K_n(p)|z-\tilde{a}_n|^2} \right\}+ O(1).
$$
Suppose that $\tilde{a}_n\to\tilde{a} \in \partial \D$. Then $v_n(\tilde a_n)\to +\infty$ and $e^{v_n}\to 0$ locally in $\D\setminus \{\tilde{a}\}$. However, $v_n$ has barycenter $0$, which contradicts $\tilde{a} \in \partial \D$. Consequently, $|\tilde{a}_n|<1-\varepsilon$ uniformly for $\varepsilon>0$ and $v_n$ remains uniformly bounded, what proves $d)$.

We finally prove $e)$. By $d)$ $v_n$ is bounded from above and hence the terms
$$2K_n(f_{a_n}(z))e^{v_n(z)} , \ \ 2h_n(f_{a_n}(z))e^{v_n(z)/2},$$
are bounded in $L^\infty$. By Lemma \ref{regularity}, $v_n$ is bounded in $W^{1+1/q, q}(\D)$ for all $q>1$. By the Sobolev embeddings, $v_n$ is bounded in $C^{0,\alpha}(\overline{\D})$ for all $\alpha \in (0,1)$ and thus, up to a subsequence, we can assume that $v_n \to v_0$ in $C^{0,\alpha}(\overline{\D})$, where $v_0$ is a solution of 
	\begin{equation*} 
\left\{\begin{array}{ll}
\displaystyle{-\Delta v_0 = 2K(p)e^{v_0(z)} } \qquad & \text{in $\D$},\\
\displaystyle{\frac{\partial v_0}{\partial \nu} +2 = 2h(p)e^{v_0(z)/2}} \qquad  &\text{on $\partial \D $.}
\end{array}\right.
\end{equation*}
Furthermore, $\int_{\D} x e^{v_0(z)} \, dz = 0$, $\int_{\D} y e^{v_0(z)} \, dz =0$. Applying Remark \ref{rem rad} we conclude that $v_0$ is given by \eqref{radialprofile}.

Denote
$$ \tilde{v}_n(z):= 2 \log \left( \frac{2 \hat{\phi}_n}{\hat{\phi}_n^2 + \hat{k}_n |z|^2} \right) .$$
Thus, $\xi_n(z)= v_n(z) - \tilde{v}_n(z)$, and it converges to $0$ in $C^{0,\alpha}(\overline{\D})$ sense, for any $\alpha \in (0,1)$, by the arguments above. Our aim is to give a quantitative estimate on this convergence.
Observe that $\xi_n$ satisfies:
\begin{equation*} 
\left\{ \begin{array}{ll}
-\Delta \xi_n =  2  K_n\left(f_{a_n}\left(\frac{a_n}{|a_n|}\right)\right) e^{\tilde{v}_n} \xi_n + c_n(z)
& \mbox{in }\D, \\
\frac{\partial \xi_n}{\partial \nu}  = 2 h_n \left( f_{a_n}\left(\frac{a_n}{|a_n|}\right) \right) e^{\tilde{v}_n/2} \xi_n + d_n(z), 
&\text{on } \partial \D.
\end{array} \right.
\end{equation*}
with
\begin{equation*}\begin{split}
c_n(z):=&\, 2  \left [ K_n(f_{a_n}(z)) - K_n \left ( f_{a_n}\left(\frac{a_n}{|a_n|}\right) \right ) \right ] e^{v_n}  \\
&+ 2  K_n \left (f_{a_n}\left(\frac{a_n}{|a_n|}\right) \right ) e^{\tilde{v}_n} (e^{\xi_n}-1 - \xi_n),\\
 d_n(z):=& \,2 \left [ h_n(f_{a_n}(z)) - h_n\left (f_{a_n}\left(\frac{a_n}{|a_n|}\right) \right ) \right ] e^{v_n/2}  \\
 &+ 2  h_n\left (f_{a_n}\left(\frac{a_n}{|a_n|}\right) \right ) e^{\tilde{v}_n/2} (e^{\xi_n/2}-1 - \xi_n/2 ). 
 \end{split}\end{equation*}
By the mean value theorem,
$$\left | K_n(f_{a_n}(z)) - K_n \left ( f_{a_n}\left(\frac{a_n}{|a_n|}\right) \right ) \right | \leq C \left | f_{a_n}(z) - f_{a_n}\left(\frac{a_n}{|a_n|}\right) \right |,$$
$$\left | h_n(f_{a_n}(z)) - h_n \left ( f_{a_n}\left(\frac{a_n}{|a_n|}\right) \right ) \right | \leq C \left | f_{a_n}(z) - f_{a_n}\left(\frac{a_n}{|a_n|}\right) \right |,$$
and applying Proposition \ref{appendix} we obtain
$$ \int_{\D} \left | f_{a_n}(z) - f_{a_n}\left(\frac{a_n}{|a_n|}\right) \right |^q \leq C (1-|a_n|)^q, \ \ \int_{\partial \D} \left | f_{a_n}(z) - f_{a_n}\left(\frac{a_n}{|a_n|}\right) \right |^q \leq C (1-|a_n|),$$
for any $1<q<2$.

Since $\xi_n \to 0$ uniformly, we have that $\| e^{\xi_n } - 1 - \xi_n \|_{L^q} \leq C \| \xi_n \|_{L^{\infty}} \| \xi_n\|_{L^q}$. We now apply Lemma \ref{lema linear 2}, to conclude that
\begin{equation} \label{lollo} \| \xi_n \|_{W^{1+1/q,q}(\D)} \leq  C_q (1-|a_n|)^{1/q} + o_n(1) (\| \xi_n\|_{L^q(\D)}+\| \xi_n\|_{L^q(\partial\D)}). \end{equation}
On the other hand, we have that $\| \xi_n \|_{L^{q}(\D)}+\| \xi_n\|_{L^q(\partial\D)} \leq C\| \xi_n \|_{W^{1+1/q,q}(\D)}$, from which we obtain, making $n$ large enough, that
%
%
$$ \| \xi_n \|_{W^{1+1/q,q}(\D)} \leq  (C_q+2) (1-|a_n|)^{1/q}.$$
Now it suffices to recall the continuous Sobolev embedding $W^{1+1/q, q}(\D) \subset C^{0,\alpha}(\overline{\D})$ for $\alpha = 1-1/q$, to conclude the proof of Proposition \ref{quantitative}.
\end{proof}

\subsection{Proof of Theorem \ref{main}, $i)$ and $ii)$}

From the above analysis, $i)$ readily follows. Moreover, with the notation of Proposition \ref{quantitative} it is enough to define 
$$u_n(z):= v_n(f_{-a_n}(z)) - 2 \log \frac{1-|a_n|^2}{|1 +\overline{a_n} z |^2}.$$
to obtain $ii)$. Observe also that
$$ \psi_n(z) = \xi_n(f_{-a_n}(z)), $$
and $|f_{-a_n}'(z)| \leq C (1-|a_n|)$. Hence, the mean value theorem allows us to estimate the $C^{0,\alpha}$ norm of $\psi_n$ as
\begin{equation*}\begin{split}
 |\psi_n(z_1) - \psi_n(z_2)| &= |\xi_n(f_{-a_n}(z_1)) - \xi_n(f_{-a_n}(z_2)) | \\
 &\leq C (1-|a_n|)^{1-\alpha} |f_{-a_n}(z_1) - f_{-a_n}(z_2)|^{\alpha} \\
 &\leq C (1-|a_n|)^{1-2\alpha}|z_1-z_2|^{\alpha}.
 \end{split}\end{equation*}

\section{Conclusion of the proof of Theorem \ref{main}} \label{sec:proof}
\setcounter{equation}{0}

In this section we conclude the proof of the main Theorem \ref{main} by showing $iii)$. 
Multiplying \eqref{ggn} by a proper vector field and integrating drives to a first blow--up condition. 

\begin{proposition}\label{firstcond}
Let $p \in S$ and recall the definition of $\Phi$ given in \ref{PHI}. Then
\beq\label{realcond}
2 h_\tau(p) + \frac{K_\tau(p)}{\Phi(p)} =0,
\eeq
or, equivalently, $\Phi_\tau(p)=0$.
\end{proposition}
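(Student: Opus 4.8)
The plan is to exploit a Pohozaev-type identity associated with a conformal vector field that fixes the blow-up point $p$, combined with the precise asymptotic profile of $u_n$ obtained in Proposition \ref{quantitative}. Without loss of generality assume $p = (1,0)$, so the relevant vector field is $F(z) = 1 - z^2 = (1-x^2+y^2, -2xy)$, which generates the one-parameter group of Möbius transformations fixing $p$. Since $F$ is holomorphic, the interior bulk term $DF(\nabla u_n, \nabla u_n) - \nabla\cdot F\,\frac{|\nabla u_n|^2}{2}$ vanishes by the Cauchy--Riemann equations, exactly as in the proof of Proposition \ref{KW}. Thus applying Lemma \ref{lemapoh} to $u_n$ (with $K = K_n$, $h = h_n$) and using that on $\partial\D$ the field $F$ is tangential with $F\cdot\tau = -2y$, one arrives at an identity of the form
\begin{equation*}
\int_{\partial\D} (2h_n e^{u_n/2} - 2)(-2y)\,\partial_\tau u_n = \int_{\D} 2e^{u_n}\bigl(\nabla K_n\cdot F - 4xK_n\bigr),
\end{equation*}
together with the auxiliary identities obtained by testing \eqref{ggn} against $4x$ and integrating by parts, just as in \eqref{unoh}--\eqref{tres}. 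Combining these yields a relation purely among boundary and bulk integrals of $e^{u_n}$, $e^{u_n/2}$ weighted by $K_n$, $\partial_\tau h_n$ and geometric factors.

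The second step is to pass to the limit $n \to \infty$ in this identity using the blow-up description. By Proposition \ref{mainprop} we have $K_n e^{u_n} \weakto (2\pi-\beta)\delta_p$ and $h_n e^{u_n/2} \weakto \beta\delta_p$ with $\beta = 2\pi h(p)/\sqrt{h(p)^2+K(p)}$; moreover the sharp profile of Proposition \ref{quantitative}, part e), gives the exact shape of the concentrating bubble after the conformal change of variables, so that concentration-type integrals such as $\int_{\D} x e^{u_n}$ or $\int_{\partial\D} y\,e^{u_n/2}$ and, crucially, the "first-order moments" like $\int_{\D}(x - 1)e^{u_n}$, $\int_{\partial\D}(y) \partial_\tau h_n e^{u_n/2}$ can be evaluated to leading order. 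The key computation is that the two sides, each of which tends to $0$ at the crude level (everything concentrates at $p$), have matching next-order terms which, after dividing by the common concentration scale, produce a finite identity. The Taylor expansions $K_n(z) = K(p) + \nabla K(p)\cdot(z-p) + o(|z-p|)$ and $h_n(z) = h(p) + \partial_\tau h(p)\,(\text{arc length}) + \cdots$ feed into this, and the bubble being (to leading order) symmetric about the normal direction at $p$ kills the contributions of the normal derivatives of $K$ and $h$, leaving only the tangential ones; the weights $\beta$ and $2\pi-\beta = K(p)\cdot(\text{area factor})$ reassemble into $\Phi(p) = h(p) + \sqrt{h(p)^2+K(p)}$, yielding precisely $2h_\tau(p) + K_\tau(p)/\Phi(p) = 0$.

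I expect the main obstacle to be the bookkeeping in this limiting passage: the naive limit of the Pohozaev identity is the trivial $0=0$, so one must extract the genuinely informative next-order term. This requires expanding $F(z)$, $x$, $y$ and the curvatures around $p$ to first order against the bubble profile \eqref{profilenhdisk}, controlling the error term $\psi_n$ (which is only $C^{0,\alpha}$ with $\|\psi_n\|_{C^{0,\alpha}} \to 0$, so its gradient is not directly controlled — one must integrate by parts to move derivatives off $\psi_n$, or use the $W^{1+1/q,q}$ bound from Proposition \ref{quantitative}), and verifying that the scale of the subleading term on both sides is the same power of $(1-|a_n|)$. The parity/symmetry argument that annihilates the normal-direction moments of the leading bubble is the conceptual heart, and making it rigorous against the perturbed curvatures $K_n$, $h_n$ rather than their limits is where the finite-mass assumption and the $C^2$ convergence of the data are used. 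The equivalence with $\Phi_\tau(p) = 0$ is then just the chain rule: $\Phi_\tau = H_\tau + (H H_\tau + K_\tau/2)/\sqrt{H^2+K}$ and $H = h$ on $\partial\D$, so $\Phi_\tau(p) = \Phi(p)\,h_\tau(p)/\sqrt{h(p)^2+K(p)} + \cdots$ collapses to a multiple of $2h_\tau(p)+K_\tau(p)/\Phi(p)$.
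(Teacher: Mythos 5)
Your choice of vector field is the wrong one for this statement, and the parity argument you invoke is reversed. The field $F(z)=1-z^2$ vanishes at $p=(1,0)$ and equals $-2y\,\tau$ on $\partial\D$; the weight $-2y$ is \emph{odd} across $p$ while the leading bubble profile $u_{\lambda_n}$ is \emph{even} in $y$. Hence in the limit of the Kazdan--Warner identity the term $(h_n)_\tau(p)\int_{\partial\D} y\,e^{u_{\lambda_n}/2}$ and the analogous term carrying $K_\tau(p)=K_y(p)$ vanish by symmetry: it is the \emph{tangential} derivatives that are killed, not the normal ones. What survives the next-order analysis with this field is the normal-direction information, namely $2(-\Delta)^{1/2}h(p)+K_\nu(p)/\Phi(p)=0$, which is exactly how the paper proves the \emph{other} half of condition (iii) in Section 5 (and it is genuinely delicate there, requiring the $o(1)$ error estimate of Proposition \ref{quantitative} and a principal-value computation). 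Your scheme therefore cannot produce $h_\tau(p)$ or $K_\tau(p)$ at all.

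The paper's proof of Proposition \ref{firstcond} is much simpler and uses the rotation field $F(x,y)=(-y,x)$, i.e.\ the unit tangent itself. With this choice the Pohozaev identity of Lemma \ref{lemapoh} reduces (since $\nabla\cdot F=0$ and $DF(\nabla u_n,\nabla u_n)=0$) to
$$-2\int_{\partial\D}(u_n)_\tau \;=\; 4\int_{\partial\D}(h_n)_\tau e^{u_n/2}\;+\;2\int_{\D}\nabla K_n\cdot F\,e^{u_n}.$$
The left-hand side is \emph{identically} zero, being the integral of a tangential derivative over a closed curve -- there is no ``$0=0$ at leading order'' issue and no need to extract subleading terms. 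Since $F(p)=\tau(p)$ does not vanish at $p$, the right-hand side concentrates nontrivially: $(h_n)_\tau e^{u_n/2}\weakto 2\pi h_\tau(p)\,\delta_p/\sqrt{h^2(p)+K(p)}$ and (writing $\nabla K_n\cdot F = \frac{\nabla K_n\cdot F}{\Phi_n}\,\Phi_n$ and using the quantization to get $\Phi_n e^{u_n}\weakto 2\pi\,\delta_p/\sqrt{h^2(p)+K(p)}$) the second term converges to $4\pi K_\tau(p)/\bigl(\Phi(p)\sqrt{h^2(p)+K(p)}\bigr)$. Setting the sum to zero gives \eqref{realcond} directly, at leading order, using only Proposition \ref{mainprop}. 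Your final chain-rule computation showing the equivalence with $\Phi_\tau(p)=0$ is correct, but the core of your argument would prove the wrong identity.
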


\begin{proof}
Consider the vector field $F:\overline{\D}\to\mathbb{R}^2$ defined by $F(x,y)=(-y,x)$. Observe that $F$ is the tangential vector in $\partial\D$. Applying Lemma~\ref{lemapoh} and integrating by parts we obtain
	\beq\label{Poh1}
\displaystyle{ -2  \int_{\partial \D} (u_n)_\tau  = 4 \int_{\partial \D} (h_n)_\tau e^{u_n/2}  +  2 \int_{ \D } \nabla K_n \cdot F e^{u_n}},
  	\eeq
where $u_n$ is the solution of \eqref{ggn} satisfying Theorem \ref{main}, $i)$, $ii)$.
Here the lower index $\tau$ means the tangential derivative, i.e. in the direction of $F$. Observe that  we have used the facts that $\nabla \cdot F=0$ and $DF (\nabla u_n,\nabla u_n)=0$.

Reasoning similarly as we did to obtain the distributional convergences \eqref{conver1} and using the quantization of mass given in Proposition \ref{mainprop}, we immediately get that
$$
(h_n)_\tau e^{u_n/2} \weakto  2\pi \frac{h_\tau(p)}{\sqrt{h^2(p)+K(p)}} \delta_p,
$$
$$
\Phi_n e^{u_n} \weakto 2\pi \frac{1}{\sqrt{h^2(p)+K(p)}} \delta_p,
$$
where the function $\Phi$ is defined in \eqref{PHI}.

Multiplying and dividing properly by $\Phi_n$, by the above convergences and the pointwise estimates on $u_n$ the right-hand side in the identity \eqref{Poh1} converges to
	\begin{equation}\label{eq:ktht}
\displaystyle{	8\pi  \frac{h_\tau(p)}{\sqrt{h^2(p)+K(p)}} + 4\pi \frac{K_\tau(p) }{\Phi(p)\sqrt{h^2(p)+K(p)}}}.
	\end{equation}
Noticing that
$$
\int_{\partial \D}  (u_n)_\tau  =0  , 
	$$
the expression \eqref{eq:ktht} vanishes and this implies \eqref{firstcond}.
\end{proof} 

Without loss of generality from now on we will suppose $p=(1,0)$. We  can also assume, by composing with suitable rotations of the functions $K_n$, $h_n$, that $\lambda_n\in \R$, $\lambda_n \to 1$. Consider
\begin{equation}\begin{split}\label{uLambda}
u_{\lambda_n}(x,y):=&2\log \left\{\frac{2(1-\lambda_n^2)\hat{\phi}_n}{\hat{\phi}_n^2(1-\lambda_n x)^2+\hat{\phi}_n^2(\lambda_n y)^2+\hat{k}_n(x-\lambda_n)^2+\hat{k}_ny^2}\right\},
\end{split}\end{equation}
that corresponds to the profile given in Theorem \ref{main}.
Notice that the denominator of the logarithm is positive as a consequence of the fact 
$$\Phi(p)^2+K(p)>0.$$

\begin{lemma}\label{asint}
If $u_n=u_{\lambda_n}+\psi_n$ with $u_{\lambda_n}$ given by \eqref{uLambda} and $\|\psi_n\|_{C^{0,\alpha}}\leq C(1-\lambda_n)^{1-2\alpha}$, $\alpha\in (0,1/2)$, then
\begin{itemize}
\item[(i)] $\displaystyle \int_{\partial\D}(h_n)_\tau e^{\frac{u_n}{2}}y=e^{\frac{\psi_n(p)}{2}}\int_{\partial\D}(h_n)_\tau e^{\frac{u_{\lambda_n}}{2}}y+o_n(1)(1-\lambda_n).$

\item[(ii)] $\displaystyle  \int_{\D}\left((K_n)_y xy-(K_n)_x(1-x^2+y^2)\right)e^{u_n}$

$\displaystyle \; =e^{\psi_n(p)}\int_{D^2}\left((K_n)_y xy-(K_n)_x(1-x^2+y^2)\right)e^{u_{\lambda_n}}+o_n(1)(1-\lambda_n),$
\end{itemize}
where $o_n(1)$ is a quantity that goes to zero as $n\rightarrow\infty$.
\end{lemma}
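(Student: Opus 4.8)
The plan is to use the decomposition $u_n=u_{\lambda_n}+\psi_n$, so that $e^{u_n}=e^{\psi_n}e^{u_{\lambda_n}}$ and $e^{u_n/2}=e^{\psi_n/2}e^{u_{\lambda_n}/2}$, together with the fact that in both integrals the weight multiplying the exponential vanishes at $p=(1,0)$. Indeed, on $\partial\D$ we have $|y|\le |z-p|=\dist(z,p)$, while the polynomial $q_n(z):=(K_n)_y xy-(K_n)_x(1-x^2+y^2)$ vanishes at $p$ and is uniformly Lipschitz on $\oD$ (since $K_n\to K$ in $C^2$), so $|q_n(z)|\le C\,\dist(z,p)$. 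Subtracting the main terms in $(i)$ and $(ii)$, the errors take the form $\int_{\partial\D}(h_n)_\tau\bigl(e^{\psi_n/2}-e^{\psi_n(p)/2}\bigr)e^{u_{\lambda_n}/2}\,y$ and $\int_{\D}q_n\bigl(e^{\psi_n}-e^{\psi_n(p)}\bigr)e^{u_{\lambda_n}}$. Since $\|\psi_n\|_{L^\infty}\to 0$, the maps $t\mapsto e^t,\ t\mapsto e^{t/2}$ are uniformly Lipschitz on the range of $\psi_n$, so by the hypothesis on $\psi_n$,
\[
\bigl|e^{\psi_n(z)}-e^{\psi_n(p)}\bigr|+\bigl|e^{\psi_n(z)/2}-e^{\psi_n(p)/2}\bigr|\le C\,|\psi_n(z)-\psi_n(p)|\le C(1-\lambda_n)^{1-2\alpha}\dist(z,p)^{\alpha}.
\]
Using this, the bounds on the weights, and that $(h_n)_\tau$ is uniformly bounded, both integrands are dominated by $C(1-\lambda_n)^{1-2\alpha}\dist(z,p)^{1+\alpha}$ times $e^{u_{\lambda_n}}$, respectively $e^{u_{\lambda_n}/2}$.

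It then remains to estimate the $\dist(z,p)^{1+\alpha}$-weighted masses of the model bubble. Set $\mu_n:=1-\lambda_n$ and $s:=1-x\ge0$ on $\oD$. The denominator in \eqref{uLambda} equals $\hat\phi_n^2(1-\lambda_n x)^2+\hat k_n(x-\lambda_n)^2+(\hat\phi_n^2\lambda_n^2+\hat k_n)y^2$; writing $1-\lambda_n x=\mu_n+\lambda_n s$ and $x-\lambda_n=\mu_n-s$, and using $\lambda_n\to1$ together with $\Phi(p)^2+K(p)=\lim(\hat\phi_n^2+\hat k_n)>0$ — and the sign information of item $i)$ to control the case $\hat k_n<0$, where one also exploits $s\ge0$ — one checks that this denominator is bounded below by $c\bigl(\mu_n^2+\dist(z,p)^2\bigr)$ for $n$ large. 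Hence
\[
e^{u_{\lambda_n}(z)}\le\frac{C\mu_n^2}{\bigl(\mu_n^2+\dist(z,p)^2\bigr)^2},\qquad e^{u_{\lambda_n}(z)/2}\le\frac{C\mu_n}{\mu_n^2+\dist(z,p)^2}
\]
on $\oD$, resp. on $\partial\D$.

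The rescaling $\dist(z,p)=\mu_n t$ (or a dyadic decomposition into annuli centred at $p$, the part $\oD\setminus B_\rho(p)$ contributing only $O(\mu_n^2)$) then gives $\int_{\D}\dist(z,p)^{1+\alpha}e^{u_{\lambda_n}}\le C\mu_n^{1+\alpha}\int_0^{\infty}\frac{t^{2+\alpha}}{(1+t^2)^2}\,dt=O(\mu_n^{1+\alpha})$, the integral being finite since $\alpha<1$, and $\int_{\partial\D}\dist(z,p)^{1+\alpha}e^{u_{\lambda_n}/2}\le C\mu_n^{1+\alpha}\int_0^{1/\mu_n}\frac{t^{1+\alpha}}{1+t^2}\,dt=O(\mu_n)$, the truncated integral being $O(\mu_n^{-\alpha})$. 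Combining with the previous paragraph, both errors are $O\bigl((1-\lambda_n)^{1-2\alpha}(1-\lambda_n)\bigr)=O\bigl((1-\lambda_n)^{2-2\alpha}\bigr)$, which is $o_n(1)(1-\lambda_n)$ precisely because $\alpha\in(0,1/2)$ makes $1-2\alpha>0$. This proves $(i)$ and $(ii)$.

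The step I expect to be the main obstacle is the boundary estimate in $(i)$: the boundary density $e^{u_{\lambda_n}/2}$ decays only like $\dist(z,p)^{-2}$, so the $\dist(z,p)^{1+\alpha}$-weighted integral is not absolutely convergent in the tail and one has to keep track of the precise power $\mu_n^{-\alpha}$ that is lost there (this is why the margin $1-2\alpha>0$ is needed). A secondary nuisance is the uniform coercivity of the denominator of \eqref{uLambda}: when the Gaussian curvature coefficient $\hat k_n=K_n(a_n/|a_n|)$ is negative one cannot simply discard terms, and one must use the constraint $1-x\ge0$ on $\oD$, together with $\lambda_n\to1$ and $\Phi(p)^2+K(p)>0$, to keep the relevant quadratic form positive.
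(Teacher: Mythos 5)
Your proposal is correct and follows essentially the same route as the paper: expand $e^{u_n}=e^{\psi_n(p)}e^{u_{\lambda_n}}\bigl(1+O(\psi_n-\psi_n(p))\bigr)$, use the H\"older bound $|\psi_n(z)-\psi_n(p)|\le C(1-\lambda_n)^{1-2\alpha}|z-p|^{\alpha}$ together with the vanishing of the weights $y$ and $(K_n)_yxy-(K_n)_x(1-x^2+y^2)$ at $p$, and bound the resulting $\dist(z,p)^{1+\alpha}$-weighted bubble masses to get errors of order $(1-\lambda_n)^{2-2\alpha}=o_n(1)(1-\lambda_n)$. Your explicit verification that the denominator of $u_{\lambda_n}$ is bounded below by $c(\mu_n^2+\dist(z,p)^2)$ (including the sign discussion when $\hat k_n<0$) is in fact more detailed than what the paper records at this point.
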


\begin{proof}
By definition
$$e^{\frac{u_n}{2}}=e^{\frac{u_{\lambda_n}+\psi_n}{2}}=e^{\frac{\psi_n(p)}{2}}e^{\frac{u_{\lambda_n}}{2}}e^{\frac{\psi_n-\psi_n(p)}{2}}=e^{\frac{\psi_n(p)}{2}}e^{\frac{u_{\lambda_n}}{2}}(1+O(\psi_n-\psi_n(p))),$$
and hence
$$\int_{\partial\D}(h_n)_\tau e^{\frac{u_n}{2}}y=e^{\frac{\psi_n(p)}{2}}\int_{\partial\D}(h_n)_\tau e^{\frac{u_{\lambda_n}}{2}}y+\underbrace{e^{\frac{\psi_n(p)}{2}}\int_{\partial\D}(h_n)_\tau e^{\frac{u_{\lambda_n}}{2}}yO(\psi_n-\psi_n(p))}_{I}.$$
Using that $\|\psi_n\|_{C^{0,\alpha}}\leq C(1-\lambda_n)^{1-2\alpha}$, $\alpha\in (0,\frac{1}{2})$ we get
\begin{equation*}\begin{split}
|I|&\leq C(1-\lambda_n)^{1-2\alpha}\int_{\partial\D}e^{\frac{u_{\lambda_n}}{2}}|y||z-1|^\alpha\,dz\\
&\leq C(1-\lambda_n)^{2-2\alpha}\int_{\partial\D}\frac{|y||z-1|^{\alpha}}{\hat{\phi}_n^2(1-\lambda_n x)^2+\hat{\phi}_n^2(\lambda_n y)^2+\hat{k}_n(x-\lambda_n)^2+\hat{k}_ny^2}\,dz\\
&\leq C (1-\lambda_n)^{2-2\alpha},
\end{split}\end{equation*}
and $(i)$ follows. Likewise,
\begin{equation*}\begin{split}
\int_{\D}&\left((K_n)_y xy-(K_n)_x(1-x^2+y^2)\right)e^{u_n}\\
&\quad =e^{\psi_n(p)}\int_{\D}\left((K_n)_y xy-(K_n)_x(1-x^2+y^2)\right)e^{u_{\lambda_n}}\\
&\quad \;+\underbrace{e^{\psi_n(p)}\int_{\D}\left((K_n)_y xy-(K_n)_x(1-x^2+y^2)\right)e^{u_{\lambda_n}}O(\psi_n-\psi_n(p))}_{II},
\end{split}\end{equation*}
and
\begin{equation*}\begin{split}
|II|&\leq C(1-\lambda_n)^{2-2\alpha}\int_{\D}\frac{(|x||y|+|1-x|+|y|^2)|z-1|^{\alpha}(1+\lambda_n)}{(\hat{\phi}_n^2(1-\lambda_n x)^2+\hat{\phi}_n^2(\lambda_n y)^2+\hat{k}_n(x-\lambda_n)^2+\hat{k}_ny^2)^2}\,dz\\
&\leq C (1-\lambda_n)^{2-2\alpha},
\end{split}\end{equation*}
what proves $(ii)$.
\end{proof}

The next result finishes the proof of Theorem \ref{main}.

\begin{theorem}
Recall the definition of $\Phi$ given in \eqref{PHI}. Then
\begin{equation}\label{secondcond}
2(-\Delta)^{1/2}h(p)+\frac{K_x(p)}{\Phi(p)}=0,
\end{equation}
or, in other words, $\Phi_{\nu}(p)=0$.
\end{theorem}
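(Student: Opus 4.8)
The plan is to extract the second, normal, blow-up condition by testing \eqref{ggn} against the last relevant conformal direction: the one-parameter group $f_\mu(z)=\frac{\mu+z}{1+\mu z}$, $\mu\in(-1,1)$, of Möbius maps fixing $p=(1,0)$, whose infinitesimal generator is the holomorphic field $F(z)=1-z^2=(1-x^2+y^2,-2xy)$. Applying the Kazdan--Warner identity of Proposition~\ref{KW} (its proof applies verbatim to $u_n$ solving \eqref{ggn}, with $K_n,h_n$ replacing $K,h$) gives
\begin{equation*}
\int_{\D}e^{u_n}\,\nabla K_n\cdot F=4\int_{\partial\D}(h_n)_\tau\,e^{u_n/2}\,y .
\end{equation*}
Since both $F$ and $y$ vanish at $p$, both sides vanish to leading order as $n\to\infty$; unlike for the rotation field used in Proposition~\ref{firstcond} (the companion dilation field merely re-proves \eqref{realcond}), the first nontrivial term is of size $1-\lambda_n$, and it is this $O(1-\lambda_n)$ balance that should encode $\Phi_\nu(p)=0$. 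I would then use Lemma~\ref{asint} --- whose proof works unchanged for any bounded integrand against $e^{u_n}$ or $e^{u_n/2}$, after at most a harmless integration by parts to recast $\nabla K_n\cdot F$ in the stated combination --- to replace $u_n$ by the explicit model bubble $u_{\lambda_n}$ of \eqref{uLambda} on both sides with error $o(1-\lambda_n)$; since $\|\psi_n\|_{C^{0,\alpha}}=O((1-\lambda_n)^{1-2\alpha})$ with $\alpha<1/2$, the factors $e^{\psi_n(p)},e^{\psi_n(p)/2}$ differ from $1$ by a quantity which, multiplied against $O(1-\lambda_n)$ integrals, is negligible and can be dropped.

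For the boundary term, the key observation is that on $\partial\D$ one has $|1-\lambda_n z|=|z-\lambda_n|$, so, using $\hat\phi_n^2+\hat k_n=2\hat\phi_n\sqrt{\hat h_n^2+\hat k_n}$ with $\hat h_n:=h_n(a_n/|a_n|)$,
\begin{equation*}
e^{u_{\lambda_n}/2}|_{\partial\D}=\frac{1-\lambda_n^2}{\sqrt{\hat h_n^2+\hat k_n}\,|z-\lambda_n|^2},
\end{equation*}
i.e. a constant multiple of the Poisson kernel of $\D$ at $\lambda_n$. Hence $\int_{\partial\D}(h_n)_\tau\,e^{u_{\lambda_n}/2}\,y$ equals $\frac{2\pi}{\sqrt{\hat h_n^2+\hat k_n}}$ times the harmonic extension $W_n$ of the boundary datum $(h_n)_\tau\cdot y$, evaluated at $\lambda_n$. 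Since that datum vanishes at $p$, $W_n(p)=0$, so Taylor-expanding along the real axis gives $W_n(\lambda_n)=-(1-\lambda_n)\,\partial_\nu W_n(p)+O((1-\lambda_n)^2)$; and a one-line Fourier computation (write $h_n=\sum_kc_k^ne^{ik\theta}$, so $(h_n)_\tau\cdot y=\tfrac12\sum_kkc_k^n(e^{i(k+1)\theta}-e^{i(k-1)\theta})$, extend harmonically via $e^{im\theta}\mapsto r^{|m|}e^{im\theta}$, and differentiate in $r$ at $(r,\theta)=(1,0)$) yields $\partial_\nu W_n(p)=\sum_k|k|c_k^n=(-\Delta)^{1/2}h_n(p)$. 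This is where the nonlocal half-Laplacian is forced in: it is built into the Poisson-kernel shape of the boundary profile. Passing to the limit, the right-hand side is $\dfrac{-8\pi(1-\lambda_n)\,(-\Delta)^{1/2}h(p)}{\sqrt{h^2(p)+K(p)}}+o(1-\lambda_n)$.

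For the bulk term, Taylor-expand $K_n$ about $p$ and use that $u_{\lambda_n}$ is even in $y$: since $\nabla K_n\cdot F$ vanishes at $p$, with $1-x^2+y^2=2(1-x)+O(|z-p|^2)$ and $-2xy=-2y+O(|z-p|\,|y|)$, and since $\int_{\D}y^2e^{u_{\lambda_n}}$ and $\int_{\D}(1-x)^2e^{u_{\lambda_n}}$ are $O\big((1-\lambda_n)^2\log\tfrac1{1-\lambda_n}\big)=o(1-\lambda_n)$ while all odd-in-$y$ moments vanish, only $2(K_n)_x(p)\int_{\D}(1-x)\,e^{u_{\lambda_n}}$ survives. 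This moment is computed exactly from the conformal invariance of the area: $u_{\lambda_n}$ is the pushforward $(v_0)_{f_{-\lambda_n}}$ of the radial profile \eqref{radialprofile} (with constants $\hat k_n,\hat\phi_n$), so the change of variables $w=f_{-\lambda_n}(z)$ turns $\int_{\D}g\,e^{u_{\lambda_n}}$ into $\int_{\D}g(f_{\lambda_n}(w))\,e^{v_0(w)}\,dw$; with $g\equiv1$ and $g(z)=x$, the radial weight $e^{v_0}$ annihilates all nonconstant Fourier modes of the holomorphic $f_{\lambda_n}$, giving $\int_{\D}e^{u_{\lambda_n}}=\frac{4\pi}{\hat\phi_n^2+\hat k_n}$, $\int_{\D}x\,e^{u_{\lambda_n}}=\lambda_n\frac{4\pi}{\hat\phi_n^2+\hat k_n}$, hence $\int_{\D}(1-x)\,e^{u_{\lambda_n}}=(1-\lambda_n)\frac{4\pi}{\hat\phi_n^2+\hat k_n}$. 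Therefore the left-hand side is $\dfrac{4\pi K_x(p)(1-\lambda_n)}{\Phi(p)\sqrt{h^2(p)+K(p)}}+o(1-\lambda_n)$, using $\hat\phi_n^2+\hat k_n\to\Phi(p)^2+K(p)=2\Phi(p)\sqrt{h^2(p)+K(p)}$. Equating the coefficients of $1-\lambda_n$ and letting $n\to\infty$ gives $\frac{K_x(p)}{\Phi(p)}=-2(-\Delta)^{1/2}h(p)$, which is \eqref{secondcond}; the equivalent form $\Phi_\nu(p)=0$ then follows from $\Phi_x=H_x\frac{\Phi}{\sqrt{H^2+K}}+\frac{K_x}{2\sqrt{H^2+K}}$ together with $H_x(p)=(-\Delta)^{1/2}h(p)$ and $\Phi(p)>0$.

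The main obstacle is the $o(1-\lambda_n)$ bookkeeping: one must verify that every term discarded in the Taylor expansions against the concentrating bubbles is genuinely $o(1-\lambda_n)$ --- the quadratic moments of $e^{u_{\lambda_n}}$ carry a logarithmic loss but remain negligible --- and, more delicately, that the error $\psi_n$, whose Hölder norm $O((1-\lambda_n)^{1-2\alpha})$ is \emph{larger} than $1-\lambda_n$, still contributes only at negligible order once paired with the $O(1-\lambda_n)$-small test integrals, and that Lemma~\ref{asint} is applied to exactly the combination produced by the Pohozaev identity. The genuinely new conceptual ingredient, by contrast, is the identification of $e^{u_{\lambda_n}/2}|_{\partial\D}$ with a Poisson kernel, which is the precise mechanism turning the local equation \eqref{ggn} into a nonlocal blow-up condition.
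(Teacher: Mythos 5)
Your proposal is correct and shares the paper's overall skeleton --- the Kazdan--Warner identity of Proposition~\ref{KW} with the field $F(z)=1-z^2$, followed by Lemma~\ref{asint} to replace $u_n$ by the model bubble $u_{\lambda_n}$ of \eqref{uLambda} up to $o(1-\lambda_n)$ --- but the two limit computations are carried out by genuinely different means. For the boundary term, the paper integrates by parts, substitutes the explicit bubble, splits the resulting singular integral over $\partial\D\setminus B_\varepsilon(p)$, $B_\varepsilon(p)\setminus B_{1-\lambda_n}(p)$ and $B_{1-\lambda_n}(p)$, and identifies the limit with a principal value that is then recognized as $(-\Delta)^{1/2}h(p)$ from the $\mathbb{S}^1$ definition of the half-Laplacian; you instead observe that $e^{u_{\lambda_n}/2}|_{\partial\D}$ is $\tfrac{2\pi}{\sqrt{\hat h_n^2+\hat k_n}}$ times the Poisson kernel at $\lambda_n$, so the integral is the harmonic extension of $(h_n)_\tau y$ evaluated at $\lambda_n$, and the half-Laplacian enters through the Dirichlet-to-Neumann characterization. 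This is cleaner and makes the nonlocality conceptually transparent; your Fourier identity $\partial_\nu W_n(p)=\sum_k|k|c_k^n=(-\Delta)^{1/2}h_n(p)$ checks out. (The only caveat: the radial Taylor remainder should be claimed as $o(1-\lambda_n)$ rather than $O((1-\lambda_n)^2)$ --- with $h_n$ only uniformly $C^2$ the stronger rate is not justified, but the weaker one follows from the uniform bound $|g_n(\theta)+g_n(-\theta)-2g_n(0)|\leq C\theta^2$ and dominated convergence, and it is all you need.) For the bulk term, the paper rescales to the half-plane and evaluates the limiting integral with Mathematica, whereas your exact moment computation $\int_{\D}(1-x)e^{u_{\lambda_n}}=(1-\lambda_n)\tfrac{4\pi}{\hat\phi_n^2+\hat k_n}$, obtained from conformal invariance of the area element and the mean value property of the holomorphic $f_{\lambda_n}$ against the radial weight $e^{v_0}$, avoids any explicit integral evaluation; the constants agree with the paper's via $\Phi(p)^2+K(p)=2\Phi(p)\sqrt{h^2(p)+K(p)}$. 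Your bookkeeping of the discarded terms (odd-in-$y$ moments vanish exactly; quadratic moments are $O((1-\lambda_n)^2\log\tfrac1{1-\lambda_n})=o(1-\lambda_n)$; the factors $e^{\psi_n(p)}$, $e^{\psi_n(p)/2}$ are $1+o(1)$ against $O(1-\lambda_n)$ integrals) is exactly what Lemma~\ref{asint} and the paper's estimates deliver, so the argument closes.
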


\begin{proof}
Let $u_n(z)=u_{\lambda_n}(z)+\psi_n(z)$ be the solution of \eqref{ggn} that satisfies Theorem \ref{main}, $(ii)$, with $u_{\lambda_n}$ defined in \eqref{uLambda} for $\lambda_n\in (-1,1)$. By Proposition \ref{KW} there holds
 \begin{equation}\label{conduLambda0}
2\int_{\D}(K_n)_ye^{u_n}xy-\int_{\D}(K_n)_xe^{u_n}(1-x^2+y^2)=-4\int_{\partial\D}(h_n)_\tau e^{u_n/2}y,
\end{equation}
and, by Lemma \ref{asint},
\begin{equation}\begin{split}\label{conduLambda}
2\int_{\D}(K_n)_ye^{u_{\lambda_n}}xy-\int_{\D}(K_n)_x&e^{u_{\lambda_n}}(1-x^2+y^2)\\
&=-4\int_{\partial\D}(h_n)_\tau e^{u_{\lambda_n}/2}y+o_n(1)(1-\lambda_n).
\end{split}\end{equation}
Let us study first the term in the right hand side. Integrating by parts and substituting by the precise value of $u_{\lambda_n}$ we get
\begin{equation*}\begin{split}
-4\int_{\partial\D}&(h_n)_\tau e^{u_{\lambda_n}/2}y=-4\int_{\partial\D}(h_n-h_n(p))_\tau e^{u_{\lambda_n}/2}y\\
&=4\int_{\partial\D}(h_n-h_n(p))\left(e^{u_{\lambda_n}/2}\frac{(u_{\lambda_n})_\tau}{2}y+e^{u_{\lambda_n}/2}x\right)\\
&=8(1-\lambda_n^2)\frac{\hat{\phi}_n}{\hat{\phi}_n^2+\hat{k}_n}\int_{\partial\D}(h_n-h_n(p))\frac{x(1-\lambda_n)^2+2\lambda_n(x-1)}{((1-\lambda_n)^2-2\lambda_n(x-1))^2}.
\end{split}\end{equation*}
Consider now the left hand side in \eqref{conduLambda}. Again integrating by parts and using the definition of $u_{\lambda_n}$ it can be seen that
\begin{equation*}\begin{split}
2&\int_{\D}(K_n)_ye^{u_{\lambda_n}}xy-\int_{\D}(K_n)_xe^{u_{\lambda_n}}(1-x^2+y^2)\\
&=\int_{\D}(K_n-K_n(p))\left((e^{u_{\lambda_n}})_x(1-x^2+y^2)-2(e^{u_{\lambda_n}})_yxy-4xe^{u_{\lambda_n}}\right)\\
&=-16(1-\lambda_n^2)^2\hat{\phi}_n^2(\hat{\phi}_n^2+\hat{k}_n)\int_{\D}\frac{(K_n-K_n(p))(-\lambda_n((1-x)^2+y^2)+(1-\lambda_n)^2x)}{(\hat{\phi}_n^2(1-\lambda_n x)^2+\hat{\phi}_n^2(\lambda_n y)^2+\hat{k}_n(x-\lambda_n)^2+\hat{k}_ny^2)^3}.
\end{split}\end{equation*}
Calling
\begin{equation*}\begin{split}
I_n&:=\int_{\partial\D}(h_n-h_n(p))\frac{x(1-\lambda_n)^2+2\lambda_n(x-1)}{((1-\lambda_n)^2-2\lambda_n(x-1))^2},\\
II_n&:=(1-\lambda_n)\int_{\D}\frac{(K_n-K_n(p))(-\lambda_n((1-x)^2+y^2)+(1-\lambda_n)^2x)}{(\hat{\phi}_n^2(1-\lambda x)^2+\hat{\phi}_n^2(\lambda_n y)^2+\hat{k}_n(x-\lambda_n)^2+\hat{k}_ny^2)^3},
\end{split}\end{equation*}
the identity \eqref{conduLambda} is equivalent to
\begin{equation}\label{integralId}
I_n=-2(1+\lambda_n)\hat{\phi}_n(\hat{\phi}_n^2+\hat{k}_n)^2II_n+o_n(1).
\end{equation}
We aim to pass to the limit when $n\rightarrow\infty$ in this expression, that corresponds to the effect of concentration of the {\it bubble} at the point $p$. 

Let $\varepsilon >0$ fixed. We assume $1-\lambda_n <\varepsilon$, and we compute the limit of $I_n$ dividing the integral in three regions: $\partial\D\setminus B_\varepsilon (p)$, $\partial\D\cap (B_\varepsilon(p)\setminus B_{1-\lambda_n}(p))$ and $\partial\D\cap B_{1-\lambda_n}(p)$. Indeed, if we denote
$$f_n(x,y):= \frac{x(1-\lambda_n)^2+2\lambda_n(x-1)}{((1-\lambda_n)^2-2\lambda_n(x-1))^2},$$
we write
\begin{equation*}\begin{split}
I_n=&\int_{\partial\D\setminus B_\varepsilon (p)}(h_n-h_n(p))f_n+\int_{\partial\D\cap (B_\varepsilon(p)\setminus B_{1-\lambda_n}(p))}(h_n-h_n(p))f_n\\
&+\int_{\partial\D\cap B_{1-\lambda_n} (p)}(h_n-h_n(p))f_n\\
=:&\,I_1+I_2+I_3.
\end{split}\end{equation*}
Notice that in the region $\partial\D\setminus B_\varepsilon (p)$ the integral is no longer singular. Thus, for $\lambda_n$ sufficiently close to 1,
$$|(h_n-h_n(p))f_n|\leq C\|h\|_{L^\infty(\partial\D)}\in L^1(\partial\D\setminus B_\varepsilon (p)),$$
where $C$ is a positive constant independent of $n$. By the Dominated Convergence Theorem,
$$\lim_{n\rightarrow \infty}I_1=\int_{\partial\D\setminus B_\varepsilon (p)}\frac{h(p)-h}{2(1-x)}.$$
On the other hand, due to the evenness of $f_n$ with respect to the variable $y$, $I_2$ can be written as
$$I_2=\int_{\partial\D}(h_n-h_n(p)-(h_n)_\tau(p)y)f_n\chi_{\partial\D\cap (B_\varepsilon(p)\setminus B_{1-\lambda_n}(p))},$$
and using the facts that $h_n$ is uniformly bounded in $C^2(\partial\D)$ and $|x-1|=\frac{|z-1|^2}{2}>\frac{(1-\lambda_n)^2}{2}$ we get
$$|I_2|\leq C\int_{\partial\D}\frac{|y|^{2}|x-1|}{|x-1|^2}\chi_{B_\varepsilon (p)}\leq C\int_{\partial\D\cap B_\varepsilon(p)}\frac{|z-1|^2|z-1|^2}{|z-1|^4}\leq C \varepsilon.$$
Therefore,
$$\lim_{n\rightarrow \infty}I_2=O(\varepsilon).$$
Likewise,
\begin{equation*}\begin{split}
|I_3|&\leq C\int_{\partial\D\cap B_{1-\lambda_n}(p)}\frac{(1-\lambda_n)^2(1-\lambda_n)^2}{(1-\lambda_n)^4}\leq C(1-\lambda_n),
\end{split}\end{equation*}
where we have used again the evenness of $f_n$ with respect to $y$ and that in this region $|1-x|\leq C(1-\lambda_n)^2$ and $|y|^2\leq C(1-\lambda_n)^2$. Hence,
$$\lim_{n\rightarrow \infty}I_3=0.$$
We compute now the limit of $II_n$. Naming
$$g_n(x,y):=\frac{(-\lambda_n((1-x)^2+y^2)+(1-\lambda_n)^2x)}{\left(\hat{\phi}_n^2(1-\lambda_n x)^2+\hat{\phi}_n^2(\lambda_n y)^2+\hat{k}_n(x-\lambda_n)^2+\hat{k}_ny^2\right)^3},$$
we split the integral as
\begin{equation*}\begin{split}
II_n&=(1-\lambda_n)\int_{\D\setminus B_\rho(p)}(K_n-K_n(p))g_n+(1-\lambda_n)\int_{\D\cap B_\rho(p)}(K_n-K_n(p))g_n\\
&=II_1+II_2.
\end{split}\end{equation*}
We first consider the case of $\D\setminus B_\rho(p)$. Observe that for $0<\rho<1$ fixed and $z=(x,y)$ we have $|x-1|\geq c(\rho)>0$ for some constant dependent on $\rho$, and hence
$$|x-\lambda_n|=|x-1-\lambda_n+1|\geq|x-1|-|1-\lambda_n|\geq c(\rho)-|1-\lambda_n|\geq\frac{c(\rho)}{2}$$
for $\lambda_n$ sufficiently close to $1$. On the other hand,
$$(1-\lambda_n x)^2\geq (x-\lambda_n)^2,$$
and thus, provided that $\Phi(p)^2+K(p)>0$, for $\lambda_n$ close enough to $1$ we have that
$$|II_1|\leq C(1-\lambda_n)\|K\|_{L^\infty(\D)},$$
with $C$ a positive constant indepedent of $n$. Thus
$$\lim_{n\rightarrow \infty}II_1=0.$$
We finally estimate the term $II_2$. Using that
$$K_n(x,y)-K_n(p)-(K_n)_y(p)y=(K_n)_x(p)(x-1)+O(|z-1|^2),$$
and the evenness of $g_n$ with respect to $y$ we write the integral as
\begin{equation*}\begin{split}
II_2&=(1-\lambda_n)(K_n)_x(p)\int_{\D\cap B_\rho(p)}(x-1)g_n+(1-\lambda_n)\int_{\D\cap B_\rho(p)}O(|z-1|^2)g_n\\
&=:II_{21}+II_{22}.
\end{split}\end{equation*}
Making the change of variables
$$1-x=\tilde{x}(1-\lambda_n),\quad y=\tilde{y}(1-\lambda_n),$$
we obtain
$$II_{21}=(K_n)_x(p)\int_{(\R_+\times\R)\cap B_{\frac{\rho}{1-\lambda_n}}(0)}\frac{\tilde{x}(\tilde{x}(1-\lambda_n)-1+\lambda_n(\tilde{x}^2+\tilde{y}^2))\,d\tilde{y}\,d\tilde{x}}{(\hat{\phi}_n^2(1+\lambda_n \tilde{x})^2+\hat{\phi}_n^2(\lambda_n\tilde{y})^2+\hat{k}_n(1-\tilde{x})^2+\hat{k}_n\tilde{y}^2)^3}.$$
Assume first $K(p)> 0$. Notice that, since $\tilde{x}\geq 0$, $(1+\lambda_n\tilde{x})^2\geq 1+\lambda_n^2\tilde{x}^2$, and thus, for $\lambda_n$ close enough to 1,
\begin{equation*}\begin{split}
\bigg|\frac{\tilde{x}(\tilde{x}(1-\lambda_n)-1+\lambda_n(\tilde{x}^2+\tilde{y}^2))}{(\hat{\phi}_n^2(1+\lambda_n \tilde{x})^2+\hat{\phi}_n^2(\lambda_n\tilde{y})^2+\hat{k}_n(1-\tilde{x})^2+\hat{k}_n\tilde{y}^2)^3}&\chi_{(\R_+\times\R)\cap B_{\frac{\rho}{1-\lambda_n}}(0)}\bigg|\\
\leq C\frac{\tilde{x}(2(\tilde{x}^2+\tilde{y}^2)+1)}{(\Phi(p)^2+\Phi(p)^2\frac{\tilde{x}^2}{2}+K(p)\tilde{y}^2)^3}&,
\end{split}\end{equation*}
that belongs to $L^1(\R_+\times \R)$. Otherwise, if $K(p)\leq 0$ there holds (for $\lambda_n$ close enough to $1$)
\begin{equation*}\begin{split}
\hat{\phi}_n^2(1+\lambda_n\tilde{x})^2+\hat{k}_n(1-\tilde{x})^2&=\hat{\phi}_n^2+\hat{k}_n+ \tilde{x}^2(\hat{\phi}_n^2\lambda_n^2+\hat{k}_n)-2\tilde{x}(-\lambda_n\hat{\phi}^2+\hat{k}_n)\\
&\geq \hat{\phi}_n^2+\hat{k}_n + \tilde{x}^2(\hat{\phi}_n^2\lambda_n^2+\hat{k}_n),
\end{split}\end{equation*}
since $\tilde{x}\geq 0$ and $\Phi(p)^2+K(p)>0$. Thus, there exists $c_0>0$, independent of $\lambda_n$, such that 
\begin{equation*}\begin{split}
\bigg|\frac{\tilde{x}(\tilde{x}(1-\lambda_n)-1+\lambda_n(\tilde{x}^2+\tilde{y}^2))}{(\hat{\phi}_n^2(1+\lambda_n \tilde{x})^2+\hat{\phi}_n^2(\lambda_n\tilde{y})^2+\hat{k}_n(1-\tilde{x})^2+\hat{k}_n\tilde{y}^2)^3}&\chi_{(\R_+\times\R)\cap B_{\frac{\rho}{1-\lambda_n}}(0)}\bigg|\\
\leq c_0\frac{\tilde{x}(2(\tilde{x}^2+\tilde{y}^2)+1)}{(1+\tilde{x}^2+\tilde{y}^2)^3}&\in L^1(\R_+\times \R).
\end{split}\end{equation*}
Passing to the limit in the integral we conclude
\begin{equation*}\begin{split}
\lim_{n\rightarrow \infty}II_{21}&=K_x(p)\left[\int_{\R_+\times\R}\frac{\tilde{x}((\tilde{x}^2+\tilde{y}^2)-1)\,d\tilde{y}\,d\tilde{x}}{(\Phi(p)^2(1+\tilde{x})^2+(\Phi(p)^2+K(p))\tilde{y}^2+K(p)(1-\tilde{x})^2)^3}\right]\\
&=\frac{\pi}{8\Phi(p)^2(K(p)+\Phi(p)^2)^2}K_x(p).
\end{split}\end{equation*}
The above integral in the half-plane has been computed with the help of Mathematica. 

Moreover,
\begin{equation*}\begin{split}
|II_{22}|&\leq C(1-\lambda_n)\int_{(\R_+\times\R)\cap B_{\frac{\rho}{1-\lambda_n}}(0)}\frac{(\tilde{x}^2+\tilde{y}^2)|\tilde{x}(1-\lambda_n)-1+\lambda_n(\tilde{x}^2+\tilde{y}^2)|\,d\tilde{y}\,d\tilde{x}}{|\hat{\phi}_n^2(1+\lambda_n \tilde{x})^2+\hat{\phi}_n^2(\lambda_n\tilde{y})^2+\hat{k}_n(1-\tilde{x})^2+\hat{k}_n\tilde{y}^2|^3}\\
&\leq C(1-\lambda_n)\int_{(\R_+\times\R)\cap B_{\frac{\rho}{1-\lambda_n}}(0)}\frac{(\tilde{x}^2+\tilde{y}^2)(2(\tilde{x}^2+\tilde{y}^2)+1)\,d\tilde{y}\,d\tilde{x}}{(1+\tilde{x}^2+\tilde{y}^2)^3}\\
&=O\left((1-\lambda_n)\log(1-\lambda_n)\right),
\end{split}\end{equation*}
and therefore
$$\lim_{n\rightarrow \infty}II_{22}=0.$$
Passing to the limit in \eqref{integralId} we get
$$\int_{\partial\D\setminus B_\varepsilon (p)}\frac{h(p)-h}{2(1-x)}+O(\varepsilon)=-\frac{\pi}{2\Phi(p)}K_x(p).$$
Finally, making $\varepsilon\rightarrow 0$,
$$\mbox{p.v.}\int_{\partial\D}\frac{h(p)-h}{2(1-x)}=-\frac{\pi}{2\Phi(p)}K_x(p).$$
Taking into account the definition of $(-\Delta)^{1/2}$ in $\mathbb{S}^1$ (see, for instance, \cite[Appendix]{DLMR}), we obtain:
$$2(-\Delta)^{1/2}h(p)+\frac{K_x(p)}{\Phi(p)}=0.$$
\end{proof}


\begin{remark} \label{interpretation} As we have seen, the point of concentration of a blow-up sequence is a critical point of the function $\Phi$. This phenomenon has a nice interpretation. Consider the energy functional associated to \eqref{gg}, that is, $I: H^1(\D) \to \R$,
$$ I(u):=  \int_{\D} \frac 1 2 |\nabla u|^2 - 2 K e^u + \int_{\partial \D} 2 u - 4 h e^{u/2}.$$
Let us evaluate the energy of the functions $u_{\lambda_n}$ defined in \eqref{uLambda}, as $\lambda_n \to 1$. Because of the concentration phenomena, we have
$$I(u_{\lambda_n})= \tilde{I}(u_{\lambda_n}) + o_n(1),$$
where $\tilde{I}$ is the limit functional related to constant curvatures
$$ \tilde{I}(u):=  \int_{\D} \frac 1 2 |\nabla u|^2 - 2 K(p) e^u + \int_{\partial \D} 2 u - 4 h(p) e^{u/2}.$$
Observe now that $\tilde{I}(u_{\lambda_n})$ is constant in $\lambda_n$; so, for convenience, we may take $\lambda_n=0$, that is,
$$u_0(z)= 2 \log \left (\frac{2\Phi(p)}{\Phi(p)^2+K(p)|z|^2} \right ).$$
Let us compute the terms
\begin{equation*}\begin{split} \int_{\D} \frac 1 2 |\nabla u_0|^2 &= 8 \frac{K(p)^2}{\Phi(p)^4} \, 2 \pi \int_0^1 \frac{r^3 \, dr}{(1+\frac{K(p)}{\Phi(p)^2}r^2)^2} = 8 \pi \int_0^{\frac{K(p)}{\Phi(p)^2}} \frac{s \, ds}{(1+s)^2} \\
&= 8 \pi \left( \log\left(1+\frac{K(p)}{\Phi(p)^2}\right) + \frac{\Phi(p)^2}{\Phi(p)^2+K(p)}-1\right)\end{split}\end{equation*}
$$ \int_{\partial \D} 2 u_0= 8 \pi \log \frac{2\Phi(p)}{\Phi(p)^2+K(p)}.$$
Moreover, we already know the value of the exponential terms, recall \eqref{conver1}. Then,
\begin{equation*}\begin{split}
\tilde{I}(u_0) &= 8 \pi \left ( \log(2 /\Phi(p)) + \frac{\Phi(p)^2}{\Phi(p)^2+K(p)}-1\right ) - 4 \pi \left (1+ \frac{h(p)}{h^2(p)+K(p)} \right ) \\
&= -8 \pi + 4 \pi \left ( 2 \log(2 /\Phi(p)) + \frac{2\Phi(p)^2}{\Phi(p)^2+K(p)}-1 - \frac{h(p)}{h^2(p)+K(p)} \right ).
\end{split}\end{equation*}
By definition $\frac{2\Phi(p)^2}{\Phi(p)^2+K(p)}-1 - \frac{h(p)}{h^2(p)+K(p)} =0$ and hence
\begin{equation*}\begin{split}
\tilde{I}(u_0)&=  -8 \pi (1 + \log (\Phi(p)/2)).
\end{split}\end{equation*}
\end{remark}

\section{Appendix}\label{Appendix}
\setcounter{equation}{0}

\begin{proposition}\label{appendix}
Let $1<q<2$ and $a_n\in\D$ with $|a_n|\to 1$ as $n\to+\infty$. There exists $C>0$, independent of $n$, such that
\begin{itemize}
\item[i)]$\displaystyle \int_{\D}\left|f_{a_n}(z)-f_{a_n}\left(\frac{a_n}{|a_n|}\right)\right|^q\leq C(1-|a_n|)^{q}$,
\item[ii)]$\displaystyle \int_{\partial\D}\left|f_{a_n}(z)-f_{a_n}\left(\frac{a_n}{|a_n|}\right)\right|^q\leq C(1-|a_n|),$
\end{itemize}
where $f_a(z):=\frac{a+z}{1+\bar{a}z}$.
\end{proposition}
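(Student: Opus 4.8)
The plan is to pass to the variable $w=f_{a_n}(z)$, use the conformal change of measure, and reduce both integrals to explicit one–dimensional ones, which are then controlled by a dyadic splitting around the point $a_n/|a_n|$ (the concentration point of the transformed bubble).

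First I would record two elementary facts. Since $f_a^{-1}=f_{-a}$, the substitution $z=f_{-a_n}(w)$ has real Jacobian $|f_{-a_n}'(w)|^{2}=\bigl(\tfrac{1-|a_n|^{2}}{|1-\overline{a_n}w|^{2}}\bigr)^{2}$; moreover a direct computation gives $f_{a_n}\!\bigl(\tfrac{a_n}{|a_n|}\bigr)=\tfrac{a_n}{|a_n|}$. Because the pole $-1/\overline{a_n}$ of $f_{a_n}$ lies outside $\overline{\D}$, the map $f_{a_n}$ is a diffeomorphism of $\overline{\D}$ onto itself carrying $\partial\D$ onto $\partial\D$. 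Performing this change of variables, and afterwards a rotation by $\overline{a_n}/|a_n|$ (which preserves area and arc length and sends $a_n/|a_n|$ to $1$), and writing $r:=|a_n|$, statements $i)$ and $ii)$ become
\begin{align*}
(1-r^{2})^{2}\int_{\D}\frac{|u-1|^{q}}{|1-ru|^{4}}\,du&\leq C(1-r)^{q},\\
(1-r^{2})\int_{\partial\D}\frac{|u-1|^{q}}{|1-ru|^{2}}&\leq C(1-r).
\end{align*}

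The crucial pointwise input is the two–sided bound
$$\tfrac14\bigl((1-r)+|1-u|\bigr)\ \leq\ |1-ru|\ \leq\ (1-r)+|1-u|,\qquad u\in\overline{\D},\ r\in(0,1).$$
The upper bound follows at once from $1-ru=(1-u)+(1-r)u$; for the lower bound one combines $|1-ru|\geq 1-r|u|\geq 1-r$ with $|1-ru|\geq|1-u|-(1-r)$ and distinguishes $|1-u|\geq 2(1-r)$ (where the latter gives $|1-ru|\geq\tfrac12|1-u|$) from $|1-u|<2(1-r)$ (where the former gives $|1-ru|\geq 1-r\geq\tfrac13\bigl((1-r)+|1-u|\bigr)$). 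Inserting this estimate and setting $\delta:=1-r$, it remains to prove the scalar bounds
$$\int_{0}^{2}\frac{\rho^{q+1}}{(\rho+\delta)^{4}}\,d\rho\leq C\,\delta^{q-2},\qquad \int_{0}^{\pi}\frac{\theta^{q}}{(\theta+\delta)^{2}}\,d\theta\leq C;$$
for the first one passes to polar coordinates centred at $1$ and uses $\D\subset\{|u-1|<2\}$, and for the second that $|e^{i\theta}-1|=2|\sin(\theta/2)|$ is comparable to $|\theta|$ on $\partial\D$. Both are handled by splitting at the scale $\delta$: on $[0,\delta]$ bound the denominator below by a power of $\delta$, on $[\delta,2]$ (resp.\ $[\delta,\pi]$) by a power of $\rho$ (resp.\ $\theta$). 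Here $q<2$ is exactly what makes $\int_{\delta}^{2}\rho^{q-3}\,d\rho$ of order $\delta^{q-2}$, and $q>1$ is exactly what keeps $\int_{\delta}^{\pi}\theta^{q-2}\,d\theta$ bounded as $\delta\to0$. Multiplying back by $(1-r^{2})^{2}\leq4\delta^{2}$, resp.\ $(1-r^{2})\leq2\delta$, yields $i)$ and $ii)$.

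\textbf{Main difficulty.} The only step deserving attention is the two–sided comparison for $|1-ru|$, which is the mechanism converting the conformal blow–up of $f_{a_n}$ near $-a_n/|a_n|$ into the stated powers of $1-|a_n|$; once it is in hand, the remainder is a routine estimate of elementary integrals, with no cancellation or delicate compensation involved, so I do not anticipate any genuine obstacle.
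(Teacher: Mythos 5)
Your proof is correct. Every step checks out: the identity $f_{a_n}(a_n/|a_n|)=a_n/|a_n|$, the Jacobians $|f_{-a_n}'|^2$ and $|f_{-a_n}'|$ for the area and arc-length elements, the two-sided comparison $|1-ru|\simeq (1-r)+|1-u|$ (your case distinction gives the constant $1/3$, so $1/4$ is safe), and the final one-dimensional integrals, where $q<2$ and $q>1$ enter exactly where you say they do.

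The route is genuinely different from the paper's. The paper does not change variables at all: it bounds the integrand pointwise, writing
$$\Bigl|f_{a_n}(z)-f_{a_n}\bigl(\tfrac{a_n}{|a_n|}\bigr)\Bigr|\leq C\,(1-|a_n|)\,\frac{1}{|1+\overline{a_n}z|},$$
so that everything reduces to estimating $\int_{\D}|1+\overline{a_n}z|^{-q}$ and $\int_{\partial\D}|1+\overline{a_n}z|^{-q}$, i.e.\ integrals of $\operatorname{dist}(z,z_0)^{-q}$ with the singular point $z_0=-a_n/|a_n|^2$ just outside $\overline{\D}$. For i) this is immediate from $q<2$ with no decomposition whatsoever; for ii) the paper uses $|1+\overline{a_n}z|=\operatorname{dist}(z,-a_n)$ on $\partial\D$ and a decomposition of the circle into arcs $L_j$ at distances $j(1-|a_n|)$ from $-a_n/|a_n|$, summing $\sum_j (j-1)^{-q}$, which is where $q>1$ enters. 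Your version transfers the difficulty to the "bubble side" via $w=f_{a_n}(z)$, at the price of the comparison lemma for $|1-ru|$, but with the payoff that both i) and ii) become the same kind of elementary scalar integral split at the scale $\delta=1-|a_n|$, and the roles of the two restrictions $q<2$ and $q>1$ become completely transparent. Either argument is acceptable; the paper's pointwise bound makes i) slightly shorter, while your unified treatment of i) and ii) is arguably cleaner for the boundary estimate.
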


\begin{proof}
By definition of $f_a(z)$,
\begin{equation*}\begin{split}
\bigg|f_{a_n}(z)-f_{a_n}\left(\frac{a_n}{|a_n|}\right)\bigg|&=\bigg|\frac{a_n+z}{1+\overline{a_n}z}-\frac{a_n}{|a_n|}\bigg|=\frac{(1-|a_n|)}{|a_n|}\bigg|\frac{a_n-|a_n|z}{1+\overline{a_n}z}\bigg|\\
&\leq C(1-|a_n|)\frac{1}{|1+\overline{a_n}z|},
\end{split}\end{equation*}
for $|a_n|$ sufficiently close to 1.
Thus,
\begin{equation}\label{boundLp}
\int_{\D}\left|f_{a_n}(z)-f_{a_n}\left(\frac{a_n}{|a_n|}\right)\right|^q\leq C(1-|a_n|)^q\int_{\D}\frac{1}{|1+\overline{a_n}z|^q},
\end{equation}
that is singular at $z_0=-\frac{1}{\overline{a_n}}=-\frac{a_n}{|a_n|^2}$. Notice that $|z_0|>1$ and therefore $z_0\notin\D$. Since $1<q<2$ we inmediately obtain $i)$. \medskip

Let us estimate the integral on $\partial\D$. Notice first that, if $z\in\partial\D$, then
$$|1+\overline{a_n}z|=|\overline{z}||1+\overline{a_n}z|=|\overline{z}+\overline{a_n}|=|z+a_n|=\mbox{dist}(z,-a_n).$$
Thus, arguing as before we obtain 
\begin{equation}\label{boundLpS}\begin{split}
\int_{\partial\D} |f_{a_n}(z)-&f_{a_n}\left(\frac{a_n}{|a_n|}\right)|^q\leq C(1-|a_n|)^q\int_{\partial\D}\frac{1}{|1+\overline{a_n}z|^q}\\
&=C(1-|a_n|)^q\int_{\partial\D}\frac{1}{\mbox{dist}(z,-a_n)^q}.
\end{split}\end{equation}
Notice that
$$\mbox{dist}(z,-a_n)\geq \mbox{dist}\left(-\frac{a_n}{|a_n|},-a_n\right)=1-|a_n|.$$
We split the integral in two regions,
$$\S_1:=\partial \D\setminus B_1\left(-\frac{a_n}{|a_n|}\right),\qquad \S_2:=\partial \D\cap B_1\left(-\frac{a_n}{|a_n|}\right).$$
If $z\in\S_1$ then there exists $c>0$, independent of $n$, such that dist$(z,-a_n)\geq c$. Therefore
\begin{equation}\label{intS1}
\int_{\S_1}\frac{1}{\mbox{dist}(z,-a_n)^q}\leq c^{-q}|\S_1|\leq c^{-q}|\partial\D|.
\end{equation}
To analyze the region $\S_2$ we divide the integral in subintervals in the following form,
$$L_1:=B_{(1-|a_n|)}\left(-\frac{a_n}{|a_n|}\right)\cap\S_2,$$
$$L_j:=\left(B_{j(1-|a_n|)}\left(-\frac{a_n}{|a_n|}\right)\setminus B_{(j-1)(1-|a_n|)}\left(-\frac{a_n}{|a_n|}\right)\right)\cap\S_2,\quad j=2,\ldots, J_n,$$
where $J_n:=\lceil\frac{1}{1-|a_n|}\rceil$, and
$$\{z_j,\tilde{z}_j\}:=\partial B_{j(1-|a_n|)}\left(-\frac{a_n}{|a_n|}\right)\cap\S_2,\quad j=1,\ldots,J_n-1.$$

\begin{figure}[h!]
     \centering
           \begin{subfigure}[b]{0.3\textwidth}
                \includegraphics[width=\textwidth]{./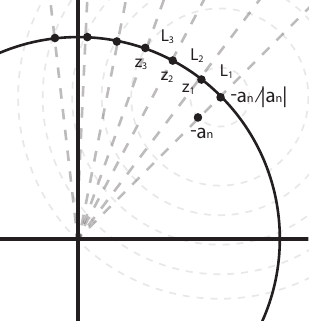}
                \caption*{Domain $\Sigma_2$.}
       \end{subfigure}
\end{figure}
\noindent Notice that
$$|L_j|=2\arcsin(1-|a_n|)\approx 2(1-|a_n|),$$
and
$$\mbox{dist}(z,-a_n)\geq 1-|a_n|,\;\;z\in L_1,$$
$$\mbox{dist}(z,-a_n)\geq \mbox{dist}(z_{j-1},-a_n)\geq (j-1)(1-|a_n|),\;\; z\in L_j,$$
for all $j=2,\ldots,J_n$. Thus,
$$\int_{L_1}\frac{1}{\mbox{dist}(z,-a_n)^q}\leq \frac{2|L_1|}{(1-|a_n|)^q}\leq C(1-|a_n|)^{1-q},$$
$$\int_{L_j}\frac{1}{\mbox{dist}(z,-a_n)^q}\leq \frac{C}{(j-1)^q}(1-|a_n|)^{1-q},\;\;j=2,\ldots,J_n.$$
Summing up we get
\begin{equation}\begin{split}\label{intS2}
\int_{\S_2}&\frac{1}{\mbox{dist}(z,-a_n)^q}\leq C(1-|a_n|)^{1-q}\left(1+\sum_{j=2}^{J_n}\frac{1}{(j-1)^q}\right)\\
&\;\leq C(1-|a_n|)^{1-q}\left(1+\sum_{j=1}^{+\infty}\frac{1}{j^q}\right)\leq C(1-|a_n|)^{1-q}.
\end{split}\end{equation}
Replacing \eqref{intS1} and \eqref{intS2} in \eqref{boundLpS} we conclude the validity of $ii)$.
\end{proof}

\bibliographystyle{unsrt}

\end{document}